\title{A ruled residue theorem for function fields of elliptic curves}
\author{Karim Johannes Becher}
\author{Parul Gupta}
\author{Sumit Chandra Mishra}
\email{karimjohannes.becher@uantwerpen.be}
\email{parul.gupta@iiserpune.ac.in, parul.gupta@uantwerpen.be}
\email{sumitcmishra@gmail.com}
\address{University of Antwerp, Department of Mathematics, Middelheim\-laan~1, 2020 Antwerpen, Belgium.}
\address{Indian Institute of Science Education and Research (IISER) Pune, Dr.~Homi Bhabha Road, Pashan, Pune 411 008, India}
\address{Indian Institute of Science Education and Research (IISER) Mohali, Knowledge City, Sector 81, Mohali 140306, India}
\date{29.06.2023}
\newcommand{\qq}{\mathbb Q}
\newcommand{\nat}{\mathbb{N}} 
\newcommand{\zz}{\mathbb Z}
\newcommand{\rr}{\mathbb R}
\newcommand{\mc}[1]{\mathcal{#1}}
\newcommand{\mf}[1]{\mathfrak{#1}}
\newcommand{\mg}[1]{{#1}^{\times}}
\newcommand{\sq}[1]{{#1}^{\times 2}}
\newcommand{\ovl}{\overline}
\newcommand{\wt}[1]{\widetilde{{#1}}}
\newcommand{\ind}{\mathsf{ind}}
\newcommand{\car}{\mathsf{char}}
\newcommand{\s}{\sigma}
\newcommand{\Pol}{\mathsf{P}}
\renewcommand{\deg}{\mathsf{deg}}
\renewcommand{\max}{\mathsf{max}}
\renewcommand{\min}{\mathsf{min}}
\newcommand{\mfm}{\mf m}
\renewcommand{\setminus}{\smallsetminus}
\renewcommand{\leq}{\leqslant}
\renewcommand{\geq}{\geqslant}
\newtheorem*{thm*}{Theorem}
\newtheorem{thm}{Theorem}
\numberwithin{thm}{section}
\newtheorem{prop}[thm]{Proposition}
\newtheorem{cor}[thm]{Corollary}
\newtheorem{lem}[thm]{Lemma}
\theoremstyle{definition}
\newtheorem{ex}[thm]{Example}
\newtheorem{rem}[thm]{Remark}
\numberwithin{equation}{thm}
\renewenvironment{proof}{\par\noindent {\em Proof:}}{\hfill$\Box$\medskip}
\theoremstyle{plain}
\begin{document}
\maketitle

\begin{abstract}
It is shown that a valuation of residue characteristic different from $2$ and $3$ on a field $E$ has at most one extension to the function field of an elliptic curve over $E$, for which the residue field extension is transcendental but not ruled. 
The cases where such an extension is present are characterised.

\medskip
\noindent
{\sc{Classification (MSC 2010):}} 12F20, 12J10, 12J20, 14H05, 16H05

\medskip
\noindent
{\sc{Keywords:}} valuation, residue field extension, Gauss extension, rational function field, function field in one variable
\end{abstract}

\section{Introduction}\label{introduction}

In this article, we study extensions of a valuation on a field $E$ to function fields in one variable over $E$.
By a \emph{function field in one variable}, we mean a finitely generated field extension of transcendence degree one.

Let $F/E$ be a function field in one variable. 
We call $F/E$ \emph{rational} if $F=E(x)$ for some element $x\in F$, and we call $F/E$ \emph{ruled} if $F=E'(x)$ for some finite extension $E'/E$ and some element $x\in F$. 

We now consider a valuation $v$ on $E$ with arbitrary value group, denoted by~$vE$. 
By an \emph{extension of $v$ to $F$} we mean a valuation $w$ on $F$ with $w|_E=v$.
In 1983, J.~Ohm \cite{Ohm} proved that, if $F/E$ is ruled, then for any extension $w$ of $v$ to $F$, the  residue field extension $Fw/Ev$ is either algebraic or ruled. 
This is called the \emph{Ruled Residue Theorem}. 
In the case where $vE=\zz$, 
this result had been obtained already in 1967 by M.~Nagata \cite[Theorem 1]{Nag}.
What is the situation if one does not assume $F/E$ to be ruled?

An extension of $v$ to $F$ is called \emph{residually transcendental} if the residue field extension $Fw/Ev$ is transcendental.
Let $\Omega_v(F)$ denote the set of residually transcendental extensions of $v$ to $F$ and let $\Omega_v^\ast(F)$ denote the subset of those extensions $w$ for which $Fw/Ev$ is not ruled.
Expressed in these terms, the Ruled Residue Theorem says that $\Omega_v^\ast(F)=\emptyset$ when $F/E$ is ruled.

In the case where $vE=\zz$, 
it was observed in \cite[Corollary 3.9]{BGVG} that the set $\Omega_v^\ast(F)$ is  finite, and in \cite[Theorem 5.3]{BGr}, assuming further that $E$ is relatively algebraically closed in $F$, it was shown that 
$|\Omega_v^\ast(F)|\leq {g}+1$ where ${g}$ is the genus of $F/E$, and further that the inequality is strict when $F/E$ has prime divisor of degree $1$.
So for example, if $vE =\zz$ and $F/E$ is the function field of a conic or of an elliptic curve, then $|\Omega_v^\ast(F)|\leq 1$.

A first step toward extending these results to the case of a valuation $v$ with arbitrary value group was taken in \cite{BG21}, where it was shown that, if $F/E$ is the function field of a conic and $v(2)=0$, then $|\Omega_v^\ast(F)|\leq 1$.
The aim of this article is to establish the same conclusion in the case where $F/E$ is the function field of an elliptic curve and $v(6)=0$, without any further assumption on the value group of $v$.
This supports the expectation that the results from \cite{BGr} can be extended so as to eliminate the assumption on the value group $vE$.

We call the function field in one variable $F/E$ \emph{elliptic} if $F/E$ has genus $1$ and 
carries a prime divisor of degree $1$. 
In other terms, $F/E$ is elliptic if and only if it is the function field of an elliptic curve over $E$.
Since our results concern the case where $\car(E)$ is different from $2$ and $3$, we may present an elliptic function field in a particularly nice form.

Let $a,b \in E$. 
We associate the quantity $\Delta_{a,b}= 4a^3 +27b^2$, which is the discriminant of the cubic polynomial $X^3+aX+b$.  
If $\Delta_{a,b}\neq 0$, then $Y^2=X^3+aX+b$ is an elliptic curve over $E$, and its function field is given by $E(X)[\sqrt{X^3+aX+b}]$.
This is an elliptic curve in Weierstra{\ss} form, and if $\car{(E)}$ is different from $2$ and $3$, then every elliptic function field over $E$ can be presented in this way.

Let $v$ be a valuation on $E$ with $v(6) =0$. Hence $\car (Ev)$ and $\car (E)$ are different from $2$ and $3$. 
Let $F/E$ be an elliptic function field.
We will show that there can be at most one residually transcendental extension $w$ of $v$ to $F$ such that $Fw/Ev$ is not ruled. We will further characterize the situations when such an extension occurs in terms of conditions on $a,b$ and $\Delta_{a,b}$.  
We call $F/E$ \emph{of good reduction with respect to $v$} if there exist $a,b\in \mc{O}_v$ with $\Delta_{a,b}\in\mg{\mc O}_v$ such that $F\simeq E(X)[\sqrt{X^3+aX+b}]$, where $\mc{O}_v$ denotes the valuation ring of $v$.
Note that $F/E$ is a regular extension, so for every finite field extension $E'/E$, $F\otimes_E E'$ is a field, which we denote by $FE'$.
We call $F/E$ \emph{of potential good reduction with respect to $v$} if there exists a finite field extension $E'/E$ such that $FE'/E'$ is of good reduction with respect to some extension of $v$ from $E$ to $E'$. (We will see that $E'/E$ can then be chosen to be of degree at most $6$ and totally ramified with respect to $v$, whereby the extension of $v$ to $E'$ is unique.)

Our main result, \Cref{main_result}, can be stated as follows:
 \begin{thm*}
    Let $F/E$ be an elliptic function field and let $v$ be a valuation on $E$ with $v(6)=0$.  
    \begin{enumerate}[$(a)$]
        \item Assume that $F/E$ is of good reduction with respect to $v$. Then $|\Omega_v^\ast(F)| = 1$ and for $w \in \Omega_v^\ast(F)$ we have that $Fw/Ev$ is an elliptic function field. 
    
        \item Assume that $F/E$ is of potential good reduction but not of good reduction with respect to $v$.
Then $\Omega_v^\ast(F)=\emptyset $.
        \item  Assume that $F/E$ is not of potential good reduction with respect to $v$.
        Then $|\Omega_v^\ast(F)|\leq 1$.
       Moreover, if $|\Omega_v^\ast(F)| =  1$ then for $w \in \Omega_v^\ast(F)$ we have that $Fw/Ev$ is the function field of a smooth conic over $Ev$.
    \end{enumerate}
\end{thm*}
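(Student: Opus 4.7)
The plan is to analyze any $w \in \Omega_v^\ast(F)$ via its restriction to a rational subfield $E(X) \subseteq F$ coming from a Weierstra{\ss} presentation. Fix $F \simeq E(X)[\sqrt{f(X)}]$ with $f(X) = X^3+aX+b$ and $\Delta_{a,b}\neq 0$. Given $w \in \Omega_v^\ast(F)$, the restriction $w_0 = w|_{E(X)}$ must be residually transcendental, since $F/E(X)$ is algebraic of degree $2$ while $Fw/Ev$ is transcendental. By Ohm's Ruled Residue Theorem applied to the ruled extension $E(X)/E$, we obtain $E(X)w_0 = \kappa(\bar t)$ for a finite extension $\kappa/Ev$ and a transcendental $\bar t$. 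Since $\car(Ev) \neq 2$, the fundamental inequality in the quadratic extension $F/E(X)$ forces either $Fw = E(X)w_0$ (so $Fw$ is ruled and $w\notin\Omega_v^\ast(F)$) or $[Fw : E(X)w_0] = 2$, with $w_0(f(X))$ even and $Fw = \kappa(\bar t)[\sqrt{\bar h(\bar t)}]$ for a specific reduction $\bar h$ of a unit multiple of $f(X)$.

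I would then use the standard classification of residually transcendental extensions of $v$ to $E(X)$: every such $w_0$ is, after change of variable over a henselization, a Gauss-type extension attached to a center $c$ algebraic over $E$ and a radius $\gamma$ in the value group. A Taylor expansion of $f$ around $c$ writes $\bar h(\bar t)$ as a polynomial of degree at most $3$ whose shape depends on the valuations $v(f(c))$, $v(f'(c))$, $v(3c)$ together with $\gamma$. A careful case analysis then yields that $Fw/\kappa$, as a function field, is either rational (so $w\notin\Omega_v^\ast(F)$), the function field of a smooth conic, or the function field of an elliptic curve. The elliptic possibility occurs precisely when, after translating $X\mapsto X-c$ and scaling, $(a,b)$ becomes a good-reduction Weierstra{\ss} model over a henselization; this reading of the case analysis furnishes the dichotomy among $(a)$, $(b)$ and $(c)$.

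Part $(a)$ then follows because the Gauss extension at $c=0$, $\gamma=0$ provides one $w\in\Omega_v^\ast(F)$ with $Fw$ the reduced elliptic function field, and the case analysis forces any other candidate $(c,\gamma)$ to produce an equivalent integral translation of the same model. Part $(c)$ follows because the case analysis excludes the elliptic possibility outright when $F$ lacks potential good reduction, so only the conic case can contribute, and that case pins down $(c,\gamma)$ uniquely. For part $(b)$, I would pass to a finite totally ramified extension $E'/E$ of degree at most $6$ over which $FE'/E'$ is of good reduction, apply $(a)$ to get the unique $w' \in \Omega_{v'}^\ast(FE')$, and then show by a descent argument that $w'|_F$ has ruled residue over $Ev$ unless the good model already descends to $E$; combined with the case analysis (which rules out any other candidate), this yields $\Omega_v^\ast(F)=\emptyset$.

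The main obstacle will be the case analysis in the second paragraph: identifying, for each tuple $(v(f(c)),v(f'(c)),v(3c),\gamma)$ in an arbitrary value group $vE$, the exact geometric type of the residue curve. Because $vE$ need not embed in $\zz$, completions and Newton polygon techniques are unavailable; the bookkeeping must be done symbolically with units modulo the valuation ideal and with the coarse partial order on $vE\otimes\qq$. A secondary difficulty is the descent step in $(b)$: verifying that the unique good-reduction extension on $FE'$ cannot restrict to an element of $\Omega_v^\ast(F)$, which requires controlling how the action of $\Gal(E'/E)$ interacts with the reduction data of the residue elliptic curve and its Weierstra{\ss} coordinates.
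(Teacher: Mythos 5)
Your overall frame (restrict $w$ to $E(X)$, use Ohm's theorem and the Fundamental Inequality in the quadratic extension $F/E(X)$, then decide the type of the residue curve) is sound and overlaps with the paper's, but your proposal defers exactly the part where the theorem lives. The paper does not use the minimal-pair/center-and-radius classification of residually transcendental extensions at all; instead it organizes the entire case analysis around the single element $Z=(aX+b)^{-1}X^3$: if $w(Z)\neq 0$ the residue field is ruled (Proposition \ref{equal_values}), and otherwise the cases $\ovl{Z}$ algebraic versus transcendental, combined with the comparison of $v(\Delta_{a,b})$ with $\min\{v(a^3),v(b^2)\}$ and the parity conditions $w(X)\in 2vE$, $v(ab)\in 2vE$, are resolved by explicit minimal polynomials of elements like $b^{-1}aX$ over $E(Y^2)$ (Lemmas \ref{a3=b2}--\ref{Zalgebraic}, Theorem \ref{potentialgoodreduction}, Theorem \ref{delta_unit}, Proposition \ref{unique_for_bad_reduction}). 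Your ``Taylor expansion around a center $c$ algebraic over $E$'' plan must additionally handle centers outside $E$ (e.g.\ near roots of the cubic) and the interaction of the radius $\gamma$ with an arbitrary, non-discrete value group; you correctly identify this as the main obstacle, but nothing in the proposal indicates how to overcome it, so as written the argument establishes none of (a), (b), (c).

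The concrete flaw is in your treatment of part (b). You propose to pass to a totally ramified $E'/E$ of degree at most $6$ over which $FE'/E'$ has good reduction, apply (a) there, and descend. But the implication you need runs the wrong way: given $w\in\Omega_v^\ast(F)$ and an extension $w''$ of $w$ to $FE'$, the residue field $FE'w''$ is a finite, generally proper, extension of $Fw$ (even though $E'v'=Ev$, since $w''FE'/wF$ need not be trivial), and a non-ruled function field can become ruled after a finite extension of constants or after any finite extension of function fields. So $|\Omega_{v'}^\ast(FE')|=1$ does not bound $|\Omega_v^\ast(F)|$, and in particular cannot show it is zero. The paper avoids base change entirely: Theorem \ref{potentialgoodreduction} proves (b) directly by showing that $v(\Delta_{a,b})=\min\{v(a^3),v(b^2)\}\notin 12vE$ forces, for $c$ with $w(c^2X)=0$, the residue $\ovl{c^2X}$ to be algebraic over $Ev$ (else $v(\Delta_{a,b})\in 12vE$ by the Gauss-extension computation), whence Lemma \ref{lemma_with_c}(i) gives ruledness. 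You would need a replacement for your descent step of comparable directness; without it, part (b) is unproven.
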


\section{Valuations on simple extensions}\label{valonRFF}

In this section we revisit some standard facts from valuation theory, in particular concerning extensions of valuations.
This includes the Fundamental Inequality. 
Our main reference is \cite[Chap.~2-3]{EP}.

Let $K$ be a field.
Consider a valuation $v$ on $K$.
We denote by $\mc{O}_v$ the valuation ring, by $\mfm_v$ its maximal ideal, by $Kv$ the residue field $\mc{O}_v/\mfm_v$, and by $vK$ the value group of $v$.
We denote the group operation in $vK$ additively. 
Note that $vK$ can be identified with the (multiplicatively written) quotient group $\mg{K}/\mg{\mc O}_v$ by taking the  order relation $\leq$ given by letting $a\mg{\mc O}_v\leq b\mg{\mc O}_v$ for $a,b\in\mg{K}$ whenever $a\mc{O}_v\subseteq b{\mc O}_v$. 
For $x\in {\mc O_v}$ we denote the residue class $x+\mfm_v$ in $Kv$
 by $xv$, or just by 
 $\ovl{x}$ if the context makes this unambiguous.
A valuation $v'$ on $K$ is \emph{equivalent to $v$} if $\mc{O}_v=\mc{O}_{v'}$, or in other terms, if $v'=\tau\circ v$ for an order preserving isomorphism $\tau:vK\to v'K$.

We will now recall some facts on extensions of valuations along a field extension.
Let $v$ be a valuation on $K$. Let $L/K$ be a field extension.
An \emph{extension of $v$ to $L$} is a valuation $w$ on $L$ such that $vK$ is an ordered subgroup of $wL$ and $w|_K=v$. By Chevalley's Theorem \cite[Section 3.1]{EP}, $v$ always extends to a valuation on $L$.
In this article, we will focus on extensions $w$ of $v$ to $L$ for which the quotient group $wL/vK$ (naturally isomorphic to $\mg{L}/\mg{K}\mg{\mc{O}}_w$) is finite.
Note that any ordered abelian group $\Gamma$ is torsion-free and therefore embeds uniquely into its divisible closure
$\Gamma\otimes_\zz\qq$, and that further the group ordering of $\Gamma$ extends uniquely to $\Gamma\otimes_\zz\qq$, whereby we view $\Gamma\otimes_\zz\qq$ as an ordered group without any ambiguity.
Hence, an extension $w$ of $v$ to $L$ where $wL/vK$ is torsion (e.g.~finite) can be viewed in a unique way as a map to $vK \otimes_\zz \qq$.
Consider now two extensions $w$ and $w'$ of $v$ to $L$ such that $wL/vK$ and $w'L/vK$ are torsion.
If $w$ and $w'$ are equivalent as valuations, then viewing them (in the only possible way) as maps to $vK\otimes_\zz\qq$, they coincide, and hence we consider them as \emph{equal}. We therefore call $w$ and $w'$ \emph{distinct} if they are not equivalent.

This applies in particular to all extensions of $v$ to $L$ when $[L:K]<\infty$.
In fact, if $[L:K]<\infty$, then for any extension $w$ of $v$ to $L$, we have $[wL:vK]\leq [L:K]< \infty$, by \cite[Cor.~3.2.3]{EP}, and hence $wL=[wL:vK]^{-1}\cdot vK$ as a subgroup of $vK\otimes_\zz \qq$.
We can now formulate the Fundamental Inequality, which is crucial in understanding the mutual relations between distinct extensions  of a valuation in  a finite field extension.

\begin{thm}[Fundamental Inequality]
\label{funda_ineq}
Assume that $L/K$ is a finite field extension. 
Let $r\in\nat$ and let $w_1,\dots, w_r$ be distinct extensions of  $v$ to $L$.
Then
$$ \displaystyle \sum^{r}_{i=1} [w_iL:vK] \cdot [Lw_i:Kv] \leq [L:K].$$
\end{thm}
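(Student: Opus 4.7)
The plan is to construct $\sum_{i=1}^r e_if_i$ elements of $L$ (setting $e_i=[w_iL:vK]$ and $f_i=[Lw_i:Kv]$) that are $K$-linearly independent; the desired inequality then follows from $\dim_K L=[L:K]$. For each $i$, I would first select $\pi_{i,1},\dots,\pi_{i,e_i}\in\mg L$ whose $w_i$-values represent the distinct cosets of $vK$ in $w_iL$, together with units $u_{i,1},\dots,u_{i,f_i}\in\mg{\mc{O}_{w_i}}$ whose residues form a $Kv$-basis of $Lw_i$. The products $u_{i,j}\pi_{i,k}$ are the natural candidates, but they all lie in one field $L$ carrying several valuations, and no single valuation can by itself detect their global $K$-linear structure. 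To get around this I would invoke that the distinct (hence pairwise independent) extensions $w_1,\dots,w_r$ of $v$ satisfy the Approximation Theorem, and produce for each triple $(i,j,k)$ an element $y_{i,j,k}\in L$ that is sufficiently $w_i$-adically close to $u_{i,j}\pi_{i,k}$ and whose $w_{i'}$-value, for every $i'\neq i$, is arbitrarily large. The required precision can be fixed once and for all, depending only on the elements chosen in the first step.

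The technical core is the following single-valuation lemma: given one extension $w$ of $v$ to $L$, elements $\pi_1,\dots,\pi_e\in\mg L$ with $w$-values in distinct cosets of $vK$, and units $u_1,\dots,u_f\in\mg{\mc{O}_w}$ with $Kv$-independent residues, one has
\[ w\Bigl(\sum_{j,k}c_{j,k}u_j\pi_k\Bigr)=\min_{(j,k):\,c_{j,k}\neq 0}\bigl(v(c_{j,k})+w(\pi_k)\bigr) \]
for every family of not-all-zero $c_{j,k}\in K$. To prove it I would group by $k$; for each $k$, factor out a $c_{j_0(k),k}$ of minimal $v$-value among the non-zero $c_{j,k}$, reduce modulo the maximal ideal of $\mc{O}_w$, and use the $Kv$-linear independence of the residues $\ovl{u_j}$ to conclude that $\sum_j c_{j,k}u_j$ has $w$-value exactly $\min_j v(c_{j,k})$. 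Since the components for distinct $k$ then contribute $w$-values in distinct cosets of $vK$, the ultrametric identity yields the claimed minimum with no cancellation.

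To finish, suppose $\sum_{i,j,k}c_{i,j,k}y_{i,j,k}=0$ non-trivially. After rescaling so that all $c_{i,j,k}$ lie in $\mc{O}_v$ with at least one being a unit, choose $i_0$ such that some $c_{i_0,j_0,k_0}$ is a unit, and apply $w_{i_0}$ to the identity. By the lemma and the approximation, the $i_0$-indexed partial sum has $w_{i_0}$-value at most $w_{i_0}(\pi_{i_0,k_0})$, while every other indexed partial sum has $w_{i_0}$-value strictly exceeding this bound by construction; hence the total has finite $w_{i_0}$-value, a contradiction. The main obstacle will be the subsidiary lemma: tracking minima and cosets in a possibly non-discrete value group, alongside the reduction-modulo-$\mfm_w$ step, calls for careful bookkeeping, and one must be vigilant in ordering the quantifiers so that the approximation precision is chosen independently of the hypothetical dependence relation. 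Once that is in place, the approximation step and the final contradiction are essentially formal.
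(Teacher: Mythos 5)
The paper offers no proof of its own here---the statement is simply cited from Engler--Prestel, Theorem 3.3.4---so your sketch is a from-scratch argument rather than a reconstruction of one given in the paper.

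There is a genuine gap in the parenthetical ``distinct (hence pairwise independent)''. Distinct extensions of a fixed valuation $v$ to a finite extension $L/K$ are always pairwise \emph{incomparable}, but when $\rk(v)\geq 2$ they need not be \emph{independent}, and the strong approximation theorem you invoke requires independence. Concretely, take $K=\qq(\!(t)\!)$ with $v$ the rank-two composite of the $t$-adic valuation and the $5$-adic valuation on the residue field $\qq$, and $L=K(i)=\qq(i)(\!(t)\!)$. The two extensions $w_1,w_2$ of $v$ to $L$ (coming from the two primes of $\zz[i]$ above $5$) share the non-trivial $t$-adic coarsening $w_0$ on $L$, hence are dependent. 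The approximation you need then fails at exactly the spot your construction requires it: your unit representatives satisfy $w_0(u_{i,j}\pi_{i,1})=0$, and any $y$ with $w_1(y-u_{1,j}\pi_{1,1})$ lying above the convex subgroup associated with $w_0$ forces $w_0(y)=0$, so $w_2(y)$ can never be pushed above that same convex subgroup. Thus ``arbitrarily large'' $w_{i'}$-values for the $y_{i,j,k}$ are unattainable, and whether the achievable precision suffices for the final contradiction depends on where the values $w_{i_0}(\pi_{i_0,k})$ sit relative to the common coarsenings---a point the sketch does not address. This is the hard part; by contrast the single-valuation lemma that you flag as the main obstacle is the routine part of the argument. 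The textbook proofs avoid the issue by working with the $\mc{O}_v$-algebra $A=\bigcap_i\mc{O}_{w_i}$ and only using the weaker Chinese-Remainder surjectivity $A\twoheadrightarrow\prod_i Lw_i$, valid for pairwise incomparable valuations, combined with a dimension count on the $Kv$-algebra obtained from $A$ (or, alternatively, by passing to henselizations).
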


\begin{proof}
See \cite[Theorem 3.3.4]{EP}.
\end{proof}

\begin{cor}
\label{funda_quad}
    Assume that $L/K$ is a quadratic field extension.
    Let $w$ be an extension of $v$ to $L$. Then $[wL:Kv]\cdot [Lw:Kv]\leq 2$, and if this is an equality, then $w$ is the unique extension of $v$ to $L$.
\end{cor}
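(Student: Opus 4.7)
The plan is to derive this as a direct consequence of the Fundamental Inequality (\Cref{funda_ineq}) applied to the quadratic extension $L/K$, where $[L:K]=2$. The key observation is that the sum appearing in \Cref{funda_ineq} is bounded by $2$, so every single summand is at most $2$, and there is very little room for multiple distinct extensions to coexist if one of them is ``big''.

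More precisely, first I would argue the inequality itself. If $w$ is an extension of $v$ to $L$, then taking $r=1$ and $w_1=w$ in \Cref{funda_ineq} gives
$$[wL:vK]\cdot [Lw:Kv]\leq [L:K]=2.$$
(I would also flag that the printed statement reads $[wL:Kv]$, which appears to be a typographical slip, as $wL$ is an ordered abelian group and $Kv$ is a field; the intended index is $[wL:vK]$.)

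Next, for the uniqueness part under equality, suppose $[wL:vK]\cdot [Lw:Kv]=2$ and let $w'$ be any extension of $v$ to $L$ distinct from $w$. Applying \Cref{funda_ineq} with $r=2$, $w_1=w$, $w_2=w'$, we obtain
$$2+[w'L:vK]\cdot [Lw':Kv]\;\leq\; [wL:vK]\cdot [Lw:Kv]+[w'L:vK]\cdot [Lw':Kv]\;\leq\; 2.$$
Since each of $[w'L:vK]$ and $[Lw':Kv]$ is a positive integer, their product is at least $1$, so this forces $1\leq 0$, a contradiction. Hence no such $w'$ exists and $w$ is the unique extension of $v$ to $L$.

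There is no substantial obstacle here; the entire argument is an application of \Cref{funda_ineq} together with the numerical observation that in an inequality whose right-hand side equals $2$, a summand equal to $2$ leaves no slack for additional positive summands. The only point worth being mildly careful about is the convention regarding ``distinct'' extensions, which has been fixed in the paragraph preceding \Cref{funda_ineq}: two extensions of $v$ to $L$ with torsion quotients $wL/vK$ are either equivalent and identified, or genuinely distinct and then contribute separate summands in the Fundamental Inequality.
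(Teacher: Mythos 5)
Your proof is correct and is exactly the paper's argument (the paper simply writes ``Since $[L:K]=2$, this follows by the Fundamental Inequality''), with the $r=1$ and $r=2$ applications spelled out explicitly. Your observation that $[wL:Kv]$ in the statement should read $[wL:vK]$ is also right.
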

\begin{proof}
    Since $[L:K]=2$, this follows by \Cref{funda_ineq}.
\end{proof}

Let $w$ be an extension of $v$ to $L$.
We call $w$ \emph{unramified in $L/K$} if $Lw/Kv$ is separable and $wL=vK$, and otherwise we call $w$ \emph{ramified in $L/K$}.

For $f=a_nT^n+\cdots + a_0 \in \mc O_v[T]$, we set  $\ovl{f}= \ovl{a_n} T^n + \cdots +\overline{a_1}T + \overline{a_0} \in Kv[T]$.

\begin{lem}\label{simple_ext}
    Let $L/K$ be a finite separable field extension. 
    Let $\alpha\in L$ be such that $L=K[\alpha]$ and let $f\in\mc{O}_v[T]$ irreducible in $K[T]$ and with $f(\alpha)=0$.
    Assume that $\ovl{f}\in Kv[T]\setminus Kv$ and let $q$ be an irreducible factor of $\ovl f$ in $Kv[T]$.
    Then there exists an extension $w$ of $v$ to $L$ such that $\alpha\in\mc{O}_w$ and $q(\ovl{\alpha})=0$ in $Lw$. 
    Furthermore, given such an extension $w$ of $v$ to $L$, if $\ovl{\alpha}\in Lw$ is a simple root of $\ovl f$, then the extension $w$ of $v$ is unramified and $Lw=Kv[\ovl{\alpha}]$.
\end{lem}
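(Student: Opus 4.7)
For the existence claim, I would work with the subring $R := \oo_v[\alpha] \subseteq L$. Assuming $f$ to be monic (the general case reduces to this by replacing $f$ with the monic minimal polynomial of $\alpha$ whenever that lies in $\oo_v[T]$, which covers the main case of $\alpha$ integral over $\oo_v$), the evaluation map $\oo_v[T]/(f) \to R$, $T \mapsto \alpha$, is an isomorphism, hence $R/\mfm_v R \cong Kv[T]/(\ovl f)$. The irreducible factor $q$ of $\ovl f$ corresponds to a maximal ideal $(q)/(\ovl f)$ of $Kv[T]/(\ovl f)$, which pulls back to a maximal ideal $\mathfrak{p} \subset R$ with residue field $Kv[T]/(q) = Kv[\ovl\alpha]$ and containing $g(\alpha)$, where $g \in \oo_v[T]$ is any monic lift of $q$. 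By Chevalley's theorem, there exists a valuation ring $\oo_w$ of $L$ dominating $R_\mathfrak{p}$. Then $\oo_w \cap K$ is a valuation ring of $K$ containing $\oo_v$ whose maximal ideal lies above $\mfm_v$, so it coincides with $\oo_v$ and $w|_K = v$. By construction $\alpha \in \oo_w$ and $q(\ovl\alpha) = \ovl g(\ovl\alpha) = 0$ in $Lw$.

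For the second assertion, I would pass to the Henselization $(K^h, v^h)$ of $(K, v)$ and apply Hensel's lemma. When $\ovl\alpha$ is a simple root of $\ovl f$, it is also a simple root of $q$, so $q$ is separable and $[Kv[\ovl\alpha]:Kv] = \deg q$. Writing $\ovl f = q \cdot \ovl h$ in $Kv[T]$, simpleness gives $\gcd(q, \ovl h) = 1$, so Hensel's lemma yields a factorization $f = f_1 f_2$ in $\oo_{v^h}[T]$ with $\ovl{f_1} = q$ and $\deg f_1 = \deg q$. Irreducibility of $q$ forces $f_1$ irreducible in $K^h[T]$, so $M := K^h[T]/(f_1)$ is a field extension of $K^h$ of degree $\deg q$, and its unique valuation $w^h$ extending $v^h$ satisfies $Mw^h = Kv[\ovl\alpha]$ and $w^h M = vK$ by the Henselian fundamental equality $[M:K^h] = [Mw^h : Kv] \cdot [w^h M : vK]$ combined with $[Mw^h : Kv] \geq \deg q$.

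Finally, the extensions of $v$ to $L$ are in bijection with the field summands of $L \otimes_K K^h = K^h[T]/(f_1) \times K^h[T]/(f_2)$, and $M$ is the unique summand whose associated extension of $v$ sends $\alpha$ to an element reducing to a root of $q$: no root of $f_2$ in an algebraic closure of $K^h$ can reduce to a root of $q$, since $\ovl{f_2} = \ovl h$ is coprime to $q$. Hence the given $w$ coincides with the extension arising from $M$, yielding $Lw = Kv[\ovl\alpha]$, $wL = vK$, and separability of $Lw/Kv$; thus $w$ is unramified. The main obstacle is setting up the Hensel factorization and subsequently identifying $w$ with $M$; both steps depend essentially on $\ovl\alpha$ being a \emph{simple} root of $\ovl f$.
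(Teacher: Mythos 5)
Your handling of the ``furthermore'' clause is essentially the paper's: both arguments pass to the henselization and combine a Hensel-type lifting of $q$ with a degree count against \Cref{funda_ineq} (the paper realizes the lift of $q$ as the minimal polynomial of $\alpha$ over the henselization $K^\ast$ via \cite[Theorem 4.1.3(2)]{EP} rather than factoring $f$ and locating the component of $L\otimes_K K^h$ to which $w$ belongs, but the mechanism is the same). One small caution: the ``Henselian fundamental equality'' you invoke can fail in general because of defect; however, your argument only needs the Fundamental Inequality together with your lower bound $[Mw^h:Kv]\geq\deg q$, so the conclusion stands. Also, $f_2$ need not be irreducible, so $L\otimes_K K^h$ may decompose further, but this does not affect your identification of the component corresponding to $w$.

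The existence part, by contrast, has a genuine gap, which you flag yourself: you reduce to the case that $f$ is monic, i.e.\ essentially to $\alpha$ being integral over $\mc{O}_v$. The lemma makes no such assumption: $f\in\mc{O}_v[T]$ may have leading coefficient in $\mfm_v$ (only $\ovl f\notin Kv$ is required), in which case $\deg\ovl f<\deg f$, the monic minimal polynomial of $\alpha$ need not lie in $\mc{O}_v[T]$, and the kernel of the evaluation map $\mc{O}_v[T]\to\mc{O}_v[\alpha]$ need not equal $(f)$ --- so the isomorphism $R/\mfm_vR\cong Kv[T]/(\ovl f)$ on which your construction rests is unavailable, and indeed $\alpha$ may fail to lie in any $\mc{O}_w$ at all for some extensions $w$. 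This generality is not idle: in \Cref{unique_for_bad_reduction} the lemma is applied to the polynomial $Q_{\theta_0}$, whose leading coefficient $b^{-1}\delta$ lies in the maximal ideal (though only the ``furthermore'' clause is needed there, and your treatment of that clause does not depend on monicity). The paper's existence argument avoids the issue by working in the splitting field $N$ of $f$ with a fixed extension $\wt{w}$ of $v$: ordering the roots by value and rewriting $f=c\,\alpha_{r+1}\cdots\alpha_n\prod_{i\leq r}(T-\alpha_i)\prod_{i>r}(\alpha_i^{-1}T-1)$ shows that $\ovl f$ has degree $r$ and that its roots are exactly the residues of the $\wt{w}$-integral roots; one then chooses $\sigma$ in the Galois group with $\sigma(\alpha)\in\mc{O}_{\wt w}$ and $q(\ovl{\sigma(\alpha)})=0$ and sets $w=\wt{w}\circ\sigma$. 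As written, your proof establishes the existence claim only under the additional hypothesis that the leading coefficient of $f$ is a unit of $\mc{O}_v$.
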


\begin{proof}
Let $N$ be the splitting field of $f$ over $L$.
Let $G$ be the Galois group of $N/K$.
Let $\wt w$ be an extension of $v$ to $N$.
We write 
$$f=c\prod_{i=1}^n (T-\alpha_i)$$
with $c\in \mg{K}$, $n\in\nat$ and $\alpha_1, \ldots, \alpha_n\in N$ such that 
$\wt{w}(\alpha_1)\geq \dots\geq \wt{w}(\alpha_n)$. 
Let $r\in\{0,\dots,n\}$ be such that $\alpha_1,\dots,\alpha_r\in\mc O_{\wt w}$ and  $\alpha_{r+1}^{-1}, \ldots, \alpha_n^{-1} \in \mf m_{\wt w} $.
Note that
$$f = c \alpha_{r+1}\cdots\alpha_n  \prod_{i=1}^r(T-\alpha_i)\prod_{i={r+1}}^n(\alpha_i^{-1}T -1) $$
and  
$$\ovl{f} = (-1)^{n-r} (\ovl{c \alpha_{r+1}\cdots\alpha_n }) \prod_{i=1}^r(T-\ovl{\alpha_i}). $$
Thus $\deg(\ovl{f})=r$ and $\ovl{\alpha_1}, \ldots, \ovl{\alpha_r} \in N{\wt w}$ are roots of $\ovl{f}$.
Since $q$ is an irreducible factor of $\ovl{f}$ in $Kv[T]$, there exists $\s\in G$ such that $\s(\alpha)\in\mc{O}_{\wt{w}}$ and $q(\ovl{\s(\alpha)})=0$ in $N\wt{w}$.
Set $w = {\wt w} \circ \sigma$.
Then $\alpha\in\mc{O}_w$ and  $q(\ovl{\alpha})= 0$ in $L{w}$.

Assume now that $\ovl{\alpha}$ in $Lw$ is a simple root of $\ovl{f}$.
 In particular, $\ovl \alpha$ is separable over $Kv$.
Let $(K^\ast, v^\ast)$ and $(L^\ast, w^\ast)$ denote the henselizations of $(K,v)$ and $(L,w)$, respectively (see \cite[page 121]{EP}). 
We may view $K^\ast$ as a subfield of $L^\ast$ in such way that $w^\ast|_{K^\ast}=v^\ast$.
By \cite[Theorem 5.2.5]{EP}, we have  $K^\ast{v^\ast} =Kv$, ${v^\ast}K^\ast = vK$,  $L^\ast{w^\ast} =Lw$ and ${w^\ast}L^\ast =wL$.
In particular, by \Cref{funda_ineq}, we have 
$$[wL:vK]\cdot  [Lw:Kv]\leq [L^\ast:K^\ast]\,.$$
By \cite[Theorem 5.2.2]{EP}, we have $L^\ast = K^\ast[\alpha]$ and $w^\ast$ is the unique extension of $v^\ast$ to $K^\ast[\alpha]$. 
 Let $g$ be the minimal polynomial of $\alpha$ over $K^\ast$. Then $\deg(g)=[L^\ast: K^\ast]$.
Since $q\in Kv[T]$ is irreducible and $q(\ovl \alpha)=0$, we obtain that $q$ is separable.
Since $K^\ast$ is henselian, we obtain by \cite[Theorem 4.1.3 (2)]{EP}  that $g\in \mc{O}_{v^\ast}[T]$ and $\ovl{g} = q$.
Hence  $$[Kv[\ovl \alpha]:Kv]=\deg(q)=\deg(\ovl g)=\deg(g)=[L^\ast:K^\ast]\geq [Lw:Kv]\,.$$
As $\ovl \alpha\in Lw$, we conclude that $Lw=Kv[\ovl\alpha]$.
In particular, $Lw/Kv$ is separable and $[Lw:Kv]=[L^\ast:K^\ast]$, whereby $[wL:vK]=1$.
Therefore $w$ is an unramified extension of $v$.
\end{proof}

Let $p$ be a prime number. For $a\in \mg K\setminus K^{\times p}$, the polynomial $X^p-a$ is irreducible in $K[T]$, and we denote  by $K[\sqrt[p]{a}]$ its root field over $K$.
For $a\in K^{\times p}$, we set $K[\sqrt[p]{a}]=K$.

\begin{cor}
\label{2.2 Generalization_without_roots}
 Let $p$ be a prime number such that $v(p)=0$.
 Let $a \in K^{\times}\setminus K^{\times p}$ be such that $v(a) \in p vK$. 
 Then 
 $aK^{\times p} \cap \mathcal{O}^{\times}_v \neq \emptyset$ and 
 any extension of $v$ to $K[\sqrt[p]{a}]$ is unramified.
 Furthermore, fixing $u \in aK^{\times p} \cap \mathcal{O}^{\times}_v$, one has:
 \begin{enumerate}[$(i)$]
\item  If $\overline{u}\notin Kv^{\times p}$, then $v$ extends uniquely to $K[\sqrt[p]{a}]$, and the residue field of this extension is $Kv[\sqrt[p]{\overline{u}}]$.

\item  If $\overline{u}\in Kv^{\times p}$, then for any extension $w$ of $v$ to $K[\sqrt[p]{a}]$, the residue field extension $K[\sqrt[p]{a}]{w}/Kv$ is either trivial or given by adjoining a primitive $p$th root of unity.
\end{enumerate}
\end{cor}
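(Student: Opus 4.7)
The plan is to exhibit a unit representative $u \in aK^{\times p}\cap\mg{\mc O_v}$, reduce the polynomial $T^p-u$ modulo $\mf m_v$, analyse its irreducible factorisation in $Kv[T]$, then use \Cref{simple_ext} to produce extensions of $v$ to $L:=K[\sqrt[p]{a}]$ and the Fundamental Inequality (\Cref{funda_ineq}) to account for all of them. Since $v(a)\in p\cdot vK$, there exists $c\in\mg K$ with $v(c)=p^{-1}v(a)$; then $u:=c^{-p}a\in aK^{\times p}\cap\mg{\mc O_v}$, settling the first claim. Put $\beta:=c^{-1}\sqrt[p]{a}$, so $L=K[\beta]$ and $\beta^p=u$. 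Since $a\notin K^{\times p}$ we have $u\notin K^{\times p}$, so $f:=T^p-u$ is the minimal polynomial of $\beta$ over $K$, of degree $p$. The hypothesis $v(p)=0$ forces $\car(Kv)\neq p$, hence $\bar f=T^p-\bar u$ is separable in $Kv[T]$.

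For case (i), $\bar u\notin Kv^{\times p}$, and so $\bar f$ is irreducible in $Kv[T]$ (using the standard fact that $X^p-s$ is irreducible over any field whenever $p$ is prime and $s$ is not a $p$th power). Applying \Cref{simple_ext} with $q=\bar f$ gives an extension $w$ of $v$ to $L$ such that $\bar\beta$ is a simple root of $\bar f$; by the second part of that lemma, $w$ is unramified and $Lw=Kv[\bar\beta]=Kv[\sqrt[p]{\bar u}]$, of degree $p$ over $Kv$. The Fundamental Inequality then forces $w$ to be the unique extension of $v$ to $L$.

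For case (ii), $\bar u=\bar t^p$ with $t\in\mg{\mc O_v}$, and over an algebraic closure of $Kv$ the polynomial $\bar f$ splits as $\prod_{\zeta}(T-\zeta\bar t)$, with $\zeta$ ranging over the $p$th roots of unity. Fixing a primitive $p$th root of unity $\zeta_p$ in that closure and setting $d:=[Kv(\zeta_p):Kv]$, the image of $\Gal(Kv(\zeta_p)/Kv)$ in $(\zz/p\zz)^{\times}$ acts freely on the $p-1$ primitive $p$th roots of unity, giving $(p-1)/d$ orbits of size $d$, so $\bar f$ factors in $Kv[T]$ as $(T-\bar t)\cdot q_1\cdots q_{(p-1)/d}$ with each $q_j$ irreducible of degree $d$. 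Applying \Cref{simple_ext} to each of the $1+(p-1)/d$ irreducible factors yields pairwise distinct unramified extensions of $v$ to $L$ whose residue fields are $Kv$ (from the linear factor) or $Kv(\zeta_p)$ (from each $q_j$). Their contributions $[w_iL:vK]\cdot[Lw_i:Kv]$ sum to $1+d\cdot(p-1)/d=p=[L:K]$, saturating the Fundamental Inequality, so these are all extensions of $v$ to $L$; in particular every extension is unramified with residue field as claimed.

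The delicate step is the completeness argument in case (ii): \Cref{simple_ext} only guarantees existence of extensions realising prescribed irreducible factors of $\bar f$. Distinctness of these extensions follows from the observation that for any extension $w$, the residue $\bar\beta\in Lw$ is a root of exactly one irreducible factor of $\bar f$, so different factors come from different extensions; the saturation of the Fundamental Inequality then precludes any further extension beyond those produced by \Cref{simple_ext}.
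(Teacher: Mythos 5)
Your proof is correct. The construction of the unit representative $u$, the reduction to the separable polynomial $T^p-\bar u$, and the treatment of case $(i)$ (irreducibility of $T^p-\bar u$ over $Kv$, then \Cref{simple_ext} plus \Cref{funda_ineq} for uniqueness) coincide with the paper's argument. Where you diverge is case $(ii)$ and the blanket claim that every extension is unramified: the paper simply observes that for an \emph{arbitrary} extension $w$, the residue of $\sqrt[p]{u}$ lies in $\mc O_w$ and is automatically a simple root of the separable polynomial $T^p-\bar u$, so the second half of \Cref{simple_ext} applies directly, giving $Lw=Kv[\beta]$ with $\beta^p=\bar u=\vartheta^p$ and hence $\beta=\rho\vartheta$ for a $p$th root of unity $\rho$. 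You instead enumerate all extensions by running \Cref{simple_ext} over the full Galois-orbit factorisation $(T-\bar t)\,q_1\cdots q_{(p-1)/d}$ and showing the contributions saturate the Fundamental Inequality; your distinctness argument (each residue of $\sqrt[p]{u}$ is a root of exactly one irreducible factor, the factors being coprime by separability) is sound, and saturation does preclude further extensions. Your route is longer but yields strictly more: the exact number $1+(p-1)/d$ of extensions and their residue fields, which the paper does not need. The paper's route is shorter because it never needs completeness of the list -- the second part of \Cref{simple_ext} already covers an arbitrary $w$.
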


\begin{proof}
Since $v(a) \in pvE$, we can find an element $x\in K^{\times}$ with $v(a)=pv(x)$.
We set $u=ax^{-p}$. Then $u\in aK^{\times p} \cap \mathcal{O}^{\times}_{v}$. In particular, this set is not empty.
Furthermore $K[\sqrt[p]{a}]=K[\sqrt[p]{u}]$.

Let $w$ be an extension of $v$ to $K[\sqrt[p]{a}]$.
Since $\car{(Kv)} \neq p$, the polynomial $T^p - \ovl{u}\in Kv[T]$ is separable, so we obtain by \Cref{simple_ext} that $wK[\sqrt[p]{a}]=vK$ and that $w$ is an unramified extension of $v$.

$(i)$\, Assume that $\ovl{u} \notin Kv^{\times p}$.
Then $K[\sqrt[p]{a}]w = Kv[\sqrt[p]{\ovl{u}}]$, by \Cref{simple_ext}, and $[K[\sqrt[p]{a}]w: Kv] =p=[K[\sqrt[p]{a}]:K]$.
Hence it follows by \Cref{funda_ineq}  that $w$ is the unique extension of $v$ to $K[\sqrt[p]{a}]$.

$(ii)$\, Assume that $\ovl{u} \in Kv^{\times p}$. Let $\vartheta\in Kv$ be such that $\ovl{u}=\vartheta^p$.
Then it follows by \Cref{simple_ext} that $K[\sqrt[p]{a}]{w} = Kv[\beta]$ for a root $\beta$ of $T^p-\ovl{u}\in Kv[T]$.
Since $\beta^p = \vartheta^p$, it follows that $\beta= \rho \vartheta$ for some root $\rho$ of $T^p-1$.
Since $\vartheta \in Kv$, we obtain that $K[\sqrt[p]{a}]{w} = Kv[\rho]$.
\end{proof}

The following special case of \Cref{2.2 Generalization_without_roots} for a quadratic extension will be used multiple times in this article.

\begin{cor}
\label{BG2.2}
Assume that $v(2)=0$.
 Let $a \in K^{\times}\setminus K^{\times 2}$ with $v(a) \in 2 vK$ and 
let $w$ be an extension of $v$ to $K[\sqrt{a}]$.
Then the extension $w$ of $v$ is unramified and $aK^{\times 2} \cap \mathcal{O}^{\times}_v \neq \emptyset$.
Furthermore, given $u \in aK^{\times 2} \cap \mathcal{O}^{\times}_v$, if $\ovl{u} \notin \sq{Kv}$, then $K[\sqrt{a}]w = Kv[\sqrt {\ovl{u}}]$ and $w$ is the unique extension of $v$ to $K[\sqrt{a}]$, and otherwise $K[\sqrt{a}]w = Kv$.
\end{cor}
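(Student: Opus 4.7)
The plan is to derive the corollary directly as the specialisation $p=2$ of \Cref{2.2 Generalization_without_roots}. The hypothesis $v(2)=0$ is precisely the hypothesis $v(p)=0$ for $p=2$, so that corollary applies to the element $a\in\mg{K}\setminus K^{\times 2}$ with $v(a)\in 2vK$ and yields at once that $aK^{\times 2}\cap \mathcal{O}_v^\times\neq\emptyset$ and that every extension $w$ of $v$ to $K[\sqrt{a}]$ is unramified.

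For the ``furthermore'' part, I would fix $u\in aK^{\times 2}\cap \mathcal{O}_v^\times$ (whose existence has just been recorded) and split into two cases according to whether or not $\ovl{u}\in\sq{Kv}$. If $\ovl{u}\notin\sq{Kv}$, then part $(i)$ of \Cref{2.2 Generalization_without_roots} applies verbatim and simultaneously gives the description $K[\sqrt{a}]w=Kv[\sqrt{\ovl{u}}]$ and the uniqueness of $w$ as an extension of $v$ to $K[\sqrt{a}]$. If on the other hand $\ovl{u}\in\sq{Kv}$, then part $(ii)$ tells us that $K[\sqrt{a}]w/Kv$ is either trivial or obtained by adjoining a primitive second root of unity.

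The one small observation to make here is that the only primitive second root of unity is $-1$, which is already in $Kv$ because $v(2)=0$ forces $\car(Kv)\neq 2$; hence in either sub-case $K[\sqrt{a}]w=Kv$, as claimed. There is essentially no genuine obstacle, since the result is just the rewriting of \Cref{2.2 Generalization_without_roots} in the case $p=2$, with the formulation simplified by the fact that second roots of unity are automatically present. I expect the written proof to occupy only a few lines.
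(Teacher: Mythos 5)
Your proposal is correct and matches the paper's proof, which consists of the single line ``It follows from \Cref{2.2 Generalization_without_roots} for $p=2$.'' Your additional remark that the only primitive second root of unity is $-1\in Kv$ (since $\car(Kv)\neq 2$) is exactly the implicit observation needed to collapse case $(ii)$ to $K[\sqrt{a}]w=Kv$.
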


\begin{proof}
It follows from \Cref{2.2 Generalization_without_roots} for $p=2$.
\end{proof}

\begin{lem}
\label{f_and_g}
Let $p$ be a prime number such that $v(p)=0$.
Let $a, b \in K$ be such that $v(a)<v(b)$.
Let $L=K[\sqrt[p]{a+b}]$ and let $w$ be  an extension  of $v$ to $L$.
Then one of the following three conditions holds:
\begin{enumerate}[$(i)$]
    \item $Lw=K[\sqrt[p]{a}]w'$ for an extension $w'$ of $v$ to $K[\sqrt[p]{a}]$.
    \item $Lw=Kv$.
    \item $Lw=Kv[\rho]$ for a primitive $p$th root of unity $\rho$.
\end{enumerate}
\end{lem}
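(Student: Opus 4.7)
The approach is to set $c = a+b$, observe that $v(c) = v(a)$ since $v(a) < v(b)$, and split into two cases according to whether $v(c) \in pvK$. A basic observation that will recur throughout is that $a/c = 1 - b/c \in 1 + \mfm_v$, so $\ovl{a/c} = 1$ in $Kv$; this identity is the bridge between $L = K[\sqrt[p]{c}]$ and $K[\sqrt[p]{a}]$.

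If $v(c) \notin pvK$, then $c \notin K^{\times p}$, so $[L:K] = p$. For a root $\alpha$ of $T^p - c$ in $L$ we have $pw(\alpha) = v(c) \notin pvK$, which forces $[wL:vK] = p$, and \Cref{funda_ineq} then forces $[Lw:Kv] = 1$, i.e.\ $Lw = Kv$, giving case (ii).

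If instead $v(c) \in pvK$, then also $v(a) \in pvK$, and I would pick $z \in K^\times$ with $v(z^p) = v(a)$. Setting $u = a/z^p$ and $u' = c/z^p = u + b/z^p$, both lie in $\mc{O}_v^\times$ and satisfy $\ovl u = \ovl{u'}$ (since $v(b/z^p) > 0$), while $K[\sqrt[p]{a}] = K[\sqrt[p]{u}]$ and $L = K[\sqrt[p]{u'}]$. The residue condition ``$\ovl u \in Kv^{\times p}$'' is thus the same for $u$ and for $u'$. If $\ovl u \notin Kv^{\times p}$, then \Cref{2.2 Generalization_without_roots}(i) applies to each radical, so $v$ extends uniquely to both $K[\sqrt[p]{a}]$ and $L$, with common residue field $Kv[\sqrt[p]{\ovl u}]$, directly giving (i). If $\ovl u \in Kv^{\times p}$, then either $L = K$ (when $u' \in K^{\times p}$), so $Lw = Kv$, or \Cref{2.2 Generalization_without_roots}(ii) applies to $u'$ and forces $Lw$ to be $Kv$ or $Kv[\rho]$ for a primitive $p$th root of unity $\rho$; either outcome falls into (ii) or (iii).

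The only delicate step in the plan is the normalisation $\ovl u = \ovl{u'}$ in the second case. This rests entirely on $v(b) > v(a)$ and is where the hypothesis of the lemma is actually used. Once this reduction to units with equal residues is in hand, the lemma becomes two parallel applications of \Cref{2.2 Generalization_without_roots} to the radicals $\sqrt[p]{u}$ and $\sqrt[p]{u'}$, with the case distinction dictated solely by whether $\ovl u$ is a $p$th power in $Kv$.
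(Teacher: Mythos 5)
Your proposal is correct and follows essentially the same route as the paper: the case $v(a+b)\notin pvK$ is handled by the Fundamental Inequality (forcing $Lw=Kv$), and the case $v(a+b)\in pvK$ is reduced, via the normalisation $u=a x^{p}$, $u'=(a+b)x^{p}$ with $\ovl{u}=\ovl{u'}$ (which is exactly where $v(a)<v(b)$ enters), to the two subcases of \Cref{2.2 Generalization_without_roots}. The only cosmetic difference is that in the first case the paper also records $Lw=K[\sqrt[p]{a}]w'$, whereas you settle for conclusion $(ii)$; both suffice.
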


\begin{proof}
Since $v(a)<v(b)$, we have $v(a+b)=v(a)$. 
If $v(a)\notin p vK$ then for any extension $w'$ of $v$  to $K[\sqrt[p]{a}]$, we have  $[wL:vK] =p= [{w'}K[\sqrt[p]{a}]:vK]$ and hence $Lw=Kv=K[\sqrt[p]{a}]{w'}$, by \Cref{funda_ineq}.

Now suppose that $v(a)\in p vK$.
Let $x\in K$ be such that $ax^{p}\in \mathcal{O}^{\times}_{v}$.
Set $u =ax^{p}$.
Then $\overline{(a+b)x^{p}}=\overline{ax^{p}}=\overline{u}$ in $Kv$.
If $\overline{u}\notin Kv^{\times p}$ then by \Cref{2.2 Generalization_without_roots} $(i)$, for the unique extension $w'$ of $v$ to $K[\sqrt[p]{a}]$ we have
$Lw=Kv[\sqrt[p]{\overline{u}}] = K[\sqrt[p]{a}]{w'}$.

If $\overline{u}\in Kv^{\times p}$, then by \Cref{2.2 Generalization_without_roots} $(ii)$, we have either $Lw=Kv$ or $Lw=Kv[\rho]$ for a primitive $p$th root of unity $\rho$.
\end{proof}

\section{Residually transcendental extensions} 

Let $E$ be a field and  $v$ a valuation on $E$.
In this section, we consider residually transcendental extensions of $v$ to certain function fields in one variable over $E$.

Let $F/E$ be a function field in one variable over $E$.
An extension $w$ of $v$ to $F$ is called \emph{residually transcendental} if the residue field extension $Fw/Ev$ is transcendental. 
The following well-known statement gives a  standard example of a residually transcendental extension of $v$ from $E$ to the rational function field $E(X)$, given in terms of the variable $X$.

\begin{prop}\label{gaussextdef}
There exists a unique valuation $w$ on $E(X)$ with $w|_E=v$, $w(X) = 0$ and such that the residue $\ovl{X}$  of $X$ in $E(X)w$ is transcendental over $Ev$. 
For this valuation $w$, we have that $E(X)w =Ev(\ovl{X})$ 
and $wE(X) = vE$.
For $n\in\nat$ and $a_0,\dots,a_n\in E$, we have 
$w(\mbox{$\sum_{i=0}^n$}a_iX^i)= \min\{v(a_0),\dots,v(a_n)\}$.  
\end{prop}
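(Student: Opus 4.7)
The plan is to prove existence and uniqueness simultaneously by showing that \emph{any} valuation $w$ with the stated properties must coincide with the explicit formula on $E[X]$. Suppose $w$ is such a valuation, and consider a nonzero $f=\sum_{i=0}^n a_iX^i\in E[X]$. The ultrametric inequality together with $w(X)=0$ and $w|_E=v$ gives $w(f)\geq m:=\min_i v(a_i)$. To prove the reverse inequality, I would pick $j$ with $v(a_j)=m$ and set $g=a_j^{-1}f$; then $g\in\mc{O}_v[X]$ with coefficient $1$ at index $j$, so its reduction $\ovl g\in Ev[T]$ is a nonzero polynomial. If one had $w(f)>m$, then $w(g)>0$, hence the residue of $g(X)$ in $E(X)w$ would vanish; but that residue is precisely $\ovl g(\ovl X)$, contradicting the assumed transcendence of $\ovl X$ over $Ev$. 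This forces the displayed formula on $E[X]$ and hence, via $w(f/g)=w(f)-w(g)$, on all of $E(X)$, thereby settling uniqueness.

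For existence, I would define $w$ on $E[X]$ by the formula and extend to $E(X)$ in the same way. The axioms $w(f+g)\geq \min\{w(f),w(g)\}$ and $w|_E=v$ are immediate. The only delicate axiom is multiplicativity, $w(fg)=w(f)+w(g)$, which is essentially Gauss's lemma for valuations: given nonzero $f,g$, pick coefficients $c,d\in\mg{E}$ realising $v(c)=w(f)$ and $v(d)=w(g)$; then $c^{-1}f$ and $d^{-1}g$ lie in $\mc{O}_v[X]$ with nonzero reductions in $Ev[T]$, so their product likewise has nonzero reduction (since $Ev[T]$ is a domain), giving $w(c^{-1}d^{-1}fg)=0$ and hence $w(fg)=v(cd)=w(f)+w(g)$. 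The extension of $w$ to $E(X)$ is then automatic.

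Finally, the additional assertions are read off the formula. Clearly $w(X)=0$, $w|_E=v$, and $wE(X)=vE$ (every value is realised on $\mg{E}$). The residue $\ovl X\in E(X)w$ is transcendental over $Ev$ by the same argument as in the uniqueness step: a nontrivial algebraic relation $\sum \ovl{b_i}\,\ovl X^{\,i}=0$ with $b_i\in\mc{O}_v$ not all in $\mfm_v$ would force $w(\sum b_iX^i)>0$, contradicting the formula. To see that $E(X)w=Ev(\ovl X)$, one writes an arbitrary element of $\mc{O}_w$ as $f/g$ with $w(f)=w(g)=:\alpha$, divides numerator and denominator by a common coefficient of valuation $\alpha$ to land in $\mc{O}_v[X]$, and reads off the residue as a rational function in $\ovl X$ over $Ev$. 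The main obstacle is the multiplicativity verification in the existence step; everything else is bookkeeping driven by the transcendence of $\ovl X$.
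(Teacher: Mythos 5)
Your proposal is correct and complete; the paper itself gives no argument here but simply cites \cite[Corollary 2.2.2]{EP}, and your proof is precisely the standard one underlying that reference (uniqueness forced by transcendence of $\ovl{X}$, existence via the Gauss-lemma verification of multiplicativity over the domain $Ev[T]$). Nothing to object to.
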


\begin{proof} 
See \cite[Corollary 2.2.2]{EP}.
\end{proof}

The valuation on $E(X)$ defined in \Cref{gaussextdef} is called the \emph{Gauss extension of $v$ to $E(X)$ with respect to $X$}\index{Gauss extension}. 
More generally, we call a valuation on $E(X)$ a \emph{Gauss extension of $v$ to $E(X)$} if it is the Gauss extension with respect to $Z$ for some  $Z\in E(X)$ with $E(Z)=E(X)$.

\begin{cor}
Let $F/E$ be a function field in one variable.
Let $w$ be a residually transcendental extension of $v$ to $F$.
Then $Fw/Ev$ is a function field in one variable.
\end{cor}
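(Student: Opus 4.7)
The strategy is to reduce everything to the rational function field case so that \Cref{gaussextdef} applies. First I would produce an element $t\in F$ that is transcendental over $E$ and whose residue $tw$ is transcendental over $Ev$. Such a $t$ is obtained by picking any $y\in Fw$ transcendental over $Ev$ (which exists because $Fw/Ev$ is transcendental) and lifting it to an element $t\in\mc{O}_w$ with $tw=y$. Then $t$ is automatically transcendental over $E$: for any hypothetical nonzero $p\in E[X]$ with $p(t)=0$, one could scale $p$ so that $p\in\mc{O}_v[X]$ with at least one coefficient in $\mg{\mc{O}}_v$ (using that $vE$ is totally ordered, so a coefficient of minimal valuation exists), and then reducing modulo $\mfm_v$ would yield a nontrivial algebraic relation for $tw$ over $Ev$, contradicting the choice of $y$.

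Next, set $w_0=w|_{E(t)}$. Since $tw=y\neq 0$, we have $t\in\mg{\mc{O}}_w$, so $w_0(t)=0$, and the residue $tw_0$ of $t$ in $E(t)w_0\subseteq Fw$ is transcendental over $Ev$. The uniqueness part of \Cref{gaussextdef} then forces $w_0$ to be the Gauss extension of $v$ to $E(t)$ with respect to $t$. In particular $E(t)w_0=Ev(tw)$, which is a rational function field in one variable over $Ev$.

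Finally, since $F/E$ has transcendence degree one and $t$ is transcendental over $E$, the extension $F/E(t)$ is finite. Applying the Fundamental Inequality (\Cref{funda_ineq}) to $w$ viewed as an extension of $w_0$ along $F/E(t)$ gives
\[
[Fw:E(t)w_0]\leq [F:E(t)]<\infty\,.
\]
Hence $Fw$ is a finite extension of $Ev(tw)$, and therefore $Fw/Ev$ is a finitely generated field extension of transcendence degree one, that is, a function field in one variable over $Ev$.

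The argument is short and essentially routine; the only mildly delicate point is the lifting/transcendence step that produces $t$, and once that is in place, \Cref{gaussextdef} and \Cref{funda_ineq} do the rest immediately.
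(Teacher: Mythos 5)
Your proof is correct and follows essentially the same route as the paper: lift a transcendental residue element to $t\in\mg{\mc{O}}_w$, identify $w|_{E(t)}$ as the Gauss extension via \Cref{gaussextdef}, and apply \Cref{funda_ineq} to the finite extension $F/E(t)$. You additionally spell out why $t$ is transcendental over $E$ (reducing a hypothetical dependence relation with a unit coefficient modulo $\mfm_v$), a step the paper leaves implicit.
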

\begin{proof}
Since $w$ is residually transcendental extension of $v$ to $F$,
there exists $Z\in \mg{\mc O}_w$ such that $\ovl Z$ is transcendental over $Ev$.
By \Cref{gaussextdef}, $w|_{E(Z)}$ is the Gauss extension of $v$ with respect to $Z$.
Since $F/E(Z)$ is a finite field extension, using \Cref{funda_ineq} we see that $Fw$ is a finite extension of $Ev(\ovl Z)$.
Thus $Fw/Ev$ is a function field in one variable.
\end{proof}

For a residually transcendental extension $w$ of $v$ to $F$, we define 
$$\ind(w,F/E)= \min\{[F: E(Z)]\,\mid\, Z \in \mg{\mc O}_w\mbox{ and~} \ovl{Z} \mbox{ is transcendental over } Ev \},$$
and call this the \emph{Ohm index of $w$ for $F/E$}.

\begin{rem}\label{Ohmindexrem}
Note that $\ind(w,F/E)=1$ if and only if $F$ is a rational function field over $E$ and $w$ is a Gauss extension of $v$ to $F$.
\end{rem}

An element $Z\in \mg{\mc{O}}_w$ is called an \emph{Ohm element of $w$ for $F/E$} if
$\ovl{Z}$ is transcendental over $Ev$
and $[F:E(Z)] =\ind(w,F/E)$.
It is clear that for any residually transcendental extension of a valuation to a function field in one variable there exists an Ohm element; see \cite[Lemma]{BG21}.
We recall Ohm's Ruled Residue Theorem, which is the starting point of this investigation as well as a crucial ingredient in obtaining related results.

\begin{thm}[Ohm]\label{RRT}
Let $w$ be a residually transcendental extension of $v$ to $E(X)$.
Let $\ell$ be the relative algebraic closure of $Ev$ in $E(X)w$. 
Then $E(X)w$ is a rational function field over $\ell$.
More precisely,  $E(X)w = \ell( \ovl{Z})$ for any Ohm element $Z$ of $w$ over $E$.
\end{thm}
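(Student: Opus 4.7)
The plan is to restrict $w$ to the rational subfield $E(Z)$, where it is a Gauss extension by \Cref{gaussextdef}, and then to analyze the residue field $E(X)w$ via the minimal polynomial of $X$ (after a Möbius adjustment) over $E(Z)$, applying \Cref{simple_ext} and exploiting the minimality that defines the Ohm index.

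Since $Z\in\mg{\mc O}_w$ and $\ovl Z$ is transcendental over $Ev$, by \Cref{gaussextdef} the restriction $u:=w|_{E(Z)}$ is the Gauss extension of $v$ with respect to $Z$, so $E(Z)u=Ev(\ovl Z)$ and $uE(Z)=vE$. Since $\ell/Ev$ is algebraic and $\ovl Z$ is transcendental over $Ev$ (hence over $\ell$), $\ell(\ovl Z)$ is a rational function field over $\ell$ with $Ev(\ovl Z)\subseteq\ell(\ovl Z)\subseteq E(X)w$ and $[\ell(\ovl Z):Ev(\ovl Z)]=[\ell:Ev]$. The Fundamental Inequality (\Cref{funda_ineq}) applied to $E(X)/E(Z)$ shows $[E(X)w:Ev(\ovl Z)]<\infty$, so the task reduces to proving $E(X)w\subseteq\ell(\ovl Z)$.

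Replacing $X$ by $1/X$ if needed, I may assume $X\in\mc O_w$. Writing $Z=p(X)/q(X)$ with coprime $p,q\in\mc O_v[T]$ of $T$-degree $n:=[E(X):E(Z)]$ and at least one coefficient a $v$-unit, the minimal polynomial of $X$ over $E(Z)$ is $F(T):=p(T)-Zq(T)\in\mc O_u[T]$, with reduction $\ovl F(T)=\ovl p(T)-\ovl Z\,\ovl q(T)\in Ev(\ovl Z)[T]$ nonzero and satisfying $\ovl F(\ovl X)=0$. If $\ovl X$ is transcendental over $Ev$, then $X$ itself is an Ohm element of index $1$, forcing $n=1$; then $E(X)=E(Z)$, $\ell=Ev$, and $E(X)w=Ev(\ovl Z)=\ell(\ovl Z)$. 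Otherwise $\ovl X\in\ell$, and the equation $\ovl p(\ovl X)=\ovl Z\cdot\ovl q(\ovl X)$ combined with $\ovl Z\notin\ell$ forces $\ovl p(\ovl X)=\ovl q(\ovl X)=0$. Hence the minimal polynomial $r\in Ev[T]$ of $\ovl X$ over $Ev$ divides both $\ovl p$ and $\ovl q$, and one gets a factorization $\ovl F=r\cdot G$ in $Ev(\ovl Z)[T]$ with $G(\ovl X)\neq 0$ (otherwise $\ovl Z$ would again be forced into $\ell$).

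If $\ovl X$ is separable over $Ev$ and the common divisor of $\ovl p$ and $\ovl q$ is exactly $r$ (not a higher power), then $\ovl X$ is a simple root of $\ovl F$, and \Cref{simple_ext} gives $E(X)w=Ev(\ovl Z)[\ovl X]\subseteq\ell(\ovl Z)$, completing the proof. The main obstacle is the residually inseparable case (possible when $\car(Ev)>0$) and the case where the common divisor of $\ovl p,\ovl q$ in $Ev[T]$ has strictly higher multiplicity at $\ovl X$. The expected remedy is to use the Ohm-minimality of $Z$ to exclude these pathologies, by showing that any such defect produces an element $Z'\in E(X)$ with $\ovl{Z'}$ still transcendental over $Ev$ and $[E(X):E(Z')]<n$, contradicting the definition of the Ohm index.
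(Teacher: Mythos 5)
The paper does not prove this statement; it cites Ohm's original article, so your task was effectively to reprove Ohm's theorem. Your setup is the right one and matches the classical strategy: restrict $w$ to $E(Z)$, where it is the Gauss extension, write the minimal polynomial of $X$ over $E(Z)$ as $F(T)=p(T)-Zq(T)$, and try to read off $E(X)w$ from the reduction $\ovl{F}$ via \Cref{simple_ext}. The easy branches (where $\ovl{X}$ is transcendental, or where $\ovl{X}$ is a simple separable root of $\ovl{F}$) are handled correctly, and the deduction $E(X)w=Ev(\ovl{Z})[\ovl{X}]=\ell(\ovl{Z})$ in the latter case is sound.

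However, there is a genuine gap, and it sits exactly where the substance of Ohm's theorem lies: the case where $\ovl{X}$ is a multiple root of $\ovl{F}$ (equivalently, where the greatest common divisor of $\ovl{p}$ and $\ovl{q}$ is divisible by a higher power of $r$, or where $\ovl{X}$ is inseparable over $Ev$, which can certainly occur in positive residue characteristic). You explicitly acknowledge this and only state an ``expected remedy'': that Ohm-minimality of $Z$ should exclude these pathologies by producing a lower-degree element $Z'$ with transcendental residue. That expectation is precisely the content of the theorem and is not carried out; no construction of $Z'$ is given, and it is not at all automatic --- this is the part of Ohm's (and Nagata's) argument that requires a careful manipulation of the numerator and denominator modulo the maximal ideal, or an induction on the degree $[E(X):E(Z)]$. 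Two smaller points: your parenthetical claim that $G(\ovl{X})\neq 0$ ``otherwise $\ovl{Z}$ would be forced into $\ell$'' is not quite right, since $G(\ovl{X})=0$ with $(\ovl{q}/r)(\ovl{X})=0$ leads to the higher-multiplicity case rather than to $\ovl{Z}\in\ell$; and \Cref{simple_ext} as stated in the paper assumes $L/K$ separable, whereas $E(X)/E(Z)$ need not be separable in positive characteristic, so even the ``nice'' branch needs a word of justification before the lemma can be invoked. As it stands the argument proves the theorem only under additional hypotheses and does not constitute a proof of the statement.
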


\begin{proof}
See \cite[Theorem 3.3]{Ohm} and its proof.
\end{proof} 

\begin{cor}
\label{RRT_corollary}
    Let $F/E$ be a function field in one variable and $v$  valuation on $E$.
    If $F/E$ is ruled, then $Fw/Ev$ is ruled for every residually transcendental extension $w$ of $v$ to $F$.
\end{cor}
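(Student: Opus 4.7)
The plan is to reduce the statement directly to Ohm's Ruled Residue Theorem (Theorem~\ref{RRT}) by passing to an intermediate field over which $F$ becomes a rational function field. Since $F/E$ is ruled, I fix a finite extension $E'/E$ and an element $x\in F$ with $F=E'(x)$. For a residually transcendental extension $w$ of $v$ to $F$, set $v'=w|_{E'}$. By the Fundamental Inequality (Theorem~\ref{funda_ineq}) applied to $E'/E$, the residue extension $E'v'/Ev$ is finite, hence algebraic.

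The next step is to verify that $w$ is residually transcendental as an extension of $v'$, so that Theorem~\ref{RRT} applies to the extension of $v'$ from $E'$ to $E'(x)=F$. This is immediate: if $Fw/E'v'$ were algebraic, then $Fw/Ev$ would be algebraic as well (since $E'v'/Ev$ is algebraic), contradicting the hypothesis that $w$ is residually transcendental over $v$. Hence $Fw/E'v'$ is transcendental, and Theorem~\ref{RRT} yields a presentation
\[ Fw \;=\; \ell(\ovl{Z}),\]
where $Z\in\mg{\mc O}_w$ is an Ohm element of $w$ for $F/E'$ and $\ell$ denotes the relative algebraic closure of $E'v'$ in $Fw$.

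To conclude that $Fw/Ev$ is ruled, I need $\ell/Ev$ to be finite. By the corollary preceding Theorem~\ref{RRT}, $Fw/Ev$ is a function field in one variable; since $E'v'/Ev$ is finite, $Fw/E'v'$ is then also a function field in one variable. Applying the classical fact that in a function field in one variable $K/k$ the relative algebraic closure of $k$ inside $K$ is a finite extension of $k$, I obtain $[\ell:E'v']<\infty$, and therefore $[\ell:Ev]<\infty$. The presentation $Fw=\ell(\ovl Z)$ then exhibits $Fw/Ev$ as ruled.

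The argument is a short reduction and presents no real obstacle; the only non-formal ingredient beyond transporting Ohm's theorem along $E\subseteq E'$ is the finiteness of the field of constants of a function field in one variable, which is entirely standard.
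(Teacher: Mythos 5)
Your argument is correct and follows essentially the same route as the paper's own proof: write $F=E'(x)$, apply Ohm's Theorem~\ref{RRT} to the extension of $v'=w|_{E'}$ to $F$, and then use $[E'v':Ev]\leq[E':E]<\infty$ to descend ruledness from $E'v'$ to $Ev$. You merely make explicit two points the paper leaves implicit (that $w$ is still residually transcendental over $v'$, and that the constant field $\ell$ is finite over the base), which is fine.
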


\begin{proof}
Assume that $F/E$ is ruled. 
 Hence $F=E'(X)$ for a finite field extension $E'/E$ and some $X\in F$ transcendental over $E'$.
    Let $w$ be a residually transcendental extension of $v$ to $F$.
    Then $Fw/E'w$ is ruled by \Cref{RRT}, and as $[E'w:Ev]\leq [E':E]<\infty$, we conclude that $Fw/Ev$ is ruled.
\end{proof}

We will now draw some consequences from the preceding statements, in particular from \Cref{RRT}.
This will provide us with various sufficient conditions for ruledness of certain residue field extensions of valuations on a function field.
These statements will later be applied to the case of an elliptic function field.

\begin{lem} \label{equal_values_ruled}
Let $p$ be a prime number with $v(p)=0$.
Let $f,g\in E(X)$ and
let $F=E(X)[\sqrt[p]{f+g}]$ and $F'=E(X)[\sqrt[p]{f}]$. 
Suppose that $F'w'/Ev$ is ruled for every residually transcendental extension $w'$ of $v$ to $F'$.
Let $w$ be a residually transcendental extension of $v$ to $F$ such that $w(f)<w(g)$.
Then $Fw/Ev$ is ruled.
\end{lem}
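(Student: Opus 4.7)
The plan is to restrict $w$ to $E(X)$ and invoke \Cref{f_and_g} to split the problem into three cases, each handled either by Ohm's theorem or by the ruledness hypothesis on $F'$.

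First, I would set $v_0 = w|_{E(X)}$. Since $F/E(X)$ is algebraic of degree at most $p$, the residue extension $Fw/E(X)v_0$ is algebraic; as $Fw/Ev$ is transcendental by hypothesis, it follows that $E(X)v_0/Ev$ is transcendental. Thus $v_0$ is a residually transcendental extension of $v$ to $E(X)$, and $w$ is an extension of $v_0$ to $F = E(X)[\sqrt[p]{f+g}]$.

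Next, I would apply \Cref{f_and_g} with $K = E(X)$, valuation $v_0$, and elements $a = f$, $b = g$; the required inequality $v_0(f) < v_0(g)$ follows from $w|_{E(X)} = v_0$ together with $w(f) < w(g)$. This yields one of three conclusions: $(i)$ $Fw = F'w'$ for some extension $w'$ of $v_0$ to $F'$; $(ii)$ $Fw = E(X)v_0$; or $(iii)$ $Fw = E(X)v_0[\rho]$ for a primitive $p$th root of unity $\rho$. In case $(i)$, the extension $w'$ of $v$ to $F'$ is residually transcendental, since $F'w' = Fw$ is transcendental over $Ev$; the hypothesis then gives that $F'w' = Fw$ is ruled over $Ev$. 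In cases $(ii)$ and $(iii)$, I would apply \Cref{RRT} to $v_0$, which yields a presentation $E(X)v_0 = \ell(\overline{Z})$ for some finite extension $\ell/Ev$ and some Ohm element $Z$; case $(ii)$ then gives $Fw = \ell(\overline{Z})$, and case $(iii)$ gives $Fw = \ell(\rho)(\overline{Z})$, both of which are ruled over $Ev$ since $\ell$ and $\ell(\rho)$ remain finite over $Ev$.

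The only genuine step of thought is the preliminary verification that $v_0$ is residually transcendental (so that \Cref{RRT} may be invoked) and, in case $(i)$, that $w'$ is residually transcendental (so that the hypothesis may be invoked); neither of these poses a real obstacle, and the case analysis itself is mechanical once \Cref{f_and_g} is in place.
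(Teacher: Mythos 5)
Your proposal is correct and follows essentially the same route as the paper: restrict $w$ to $E(X)$, apply \Cref{f_and_g} with $a=f$, $b=g$ to get the three-case dichotomy, dispose of case $(i)$ via the hypothesis on $F'$ and of cases $(ii)$ and $(iii)$ via \Cref{RRT} (ruledness of $E(X)w/Ev$ and of its finite extension by a root of unity). The only difference is that you spell out the routine verifications (residual transcendence of $w|_{E(X)}$ and of $w'$) that the paper leaves implicit.
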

\begin{proof}
By \Cref{RRT}, $E(X)w/Ev$ is ruled.
Hence also $E(X)w[\rho]/Ev$ is ruled for a primitive $p$th root of unity $\rho$.
In view of the hypotheses on $w$ and $F'$, the statement now follows by \Cref{f_and_g}.
\end{proof}

\begin{prop}
\label{equal_values}
Let $n \in \nat\setminus\{0,1\}$ and let $p$ be a prime number with $v(p)=0$.

Let $a,b\in E$ and $F=E(X)[\sqrt[p]{X^n+aX+b}]$. 
Let $w$ be a residually transcendental extension of $v$ to $F$.
If $w(X^n)\neq w(aX+b)$ then $Fw/Ev$ is ruled.
\end{prop}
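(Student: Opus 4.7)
My strategy is to reduce the statement to \Cref{equal_values_ruled} by splitting $X^n + aX + b$ into two summands whose $w$-values are strictly ordered. Since $w(X^n) \neq w(aX+b)$, either $w(X^n) < w(aX+b)$, in which case I set $f = X^n$ and $g = aX + b$, or $w(aX+b) < w(X^n)$, in which case I swap and take $f = aX + b$, $g = X^n$. In either case, \Cref{equal_values_ruled} reduces the claim that $Fw/Ev$ is ruled to showing that every residually transcendental extension $w'$ of $v$ to the auxiliary field $F' = E(X)[\sqrt[p]{f}]$ produces a ruled residue field $F'w'/Ev$. For this I would in fact prove something stronger: that $F'$ itself is rational or ruled as an extension of $E$, after which \Cref{RRT_corollary} delivers the required conclusion about $F'w'/Ev$ for free.

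The verification of the rational/ruled structure of $F'$ splits into four subcases. For $f = aX+b$ with $a = 0$, one has $F' = E[\sqrt[p]{b}](X)$, manifestly ruled. For $f = aX+b$ with $a \neq 0$, the substitution $U = aX + b$ turns $E(X)$ into $E(U)$ and $F'$ into $E(U)[\sqrt[p]{U}]$, which coincides with the rational function field $E(Z)$ via $Z = \sqrt[p]{U}$. For $f = X^n$ with $p \mid n$, one has $X^n = (X^{n/p})^p \in E(X)^{\times p}$, so $F' = E(X)$. For $f = X^n$ with $\gcd(n,p) = 1$, I pick integers $s,t \in \zz$ with $sn + tp = 1$, put $Y = \sqrt[p]{X^n}$, and set $Z = X^t Y^s$; then $Z^p = X^{tp} Y^{sp} = X^{tp + sn} = X$, so $E(Z) \subseteq F'$, and the degree count $[F':E(X)] = p = [E(Z):E(X)]$ forces $F' = E(Z)$, again rational over $E$.

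The main obstacle is the last of these subcases: recognizing that the root field of $T^p - X^n$ over $E(X)$ is still a rational function field when $\gcd(n,p) = 1$. The B\'ezout-type identity $sn + tp = 1$ supplies the correct generator $Z$; all remaining verifications are essentially formal bookkeeping around the Ruled Residue Theorem.
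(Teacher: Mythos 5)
Your proposal is correct and follows essentially the same route as the paper: decompose according to which of $w(X^n)$, $w(aX+b)$ is smaller, apply \Cref{equal_values_ruled}, and observe that the auxiliary field $E(X)[\sqrt[p]{f}]$ is rational (or, when $a=0$, merely ruled) over $E$ so that \Cref{RRT} and \Cref{RRT_corollary} apply. The paper simply asserts the rationality of $E(X)[\sqrt[p]{X^n}]$ and $E(X)[\sqrt[p]{aX+b}]$ without proof, whereas you verify it via the substitution $U=aX+b$ and the B\'ezout identity $sn+tp=1$; both verifications are correct.
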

\begin{proof}
Since $E(X)[\sqrt[p]{aX+b}]/E$ and $E(X)[\sqrt[p]{X^n}]/E$ are rational function fields in one variable, the statement follows by \Cref{RRT} and \Cref{equal_values_ruled}.
\end{proof}

 \begin{lem}
 \label{scalarext-noresidueext_for_p}
Assume that $v(p)=0$. 
Let $F/E$ be a function field in one variable  and $w$ be an extension of $v$ to $F$ such that $Fw/Ev$ is non-ruled.
Let  $b\in \mg E\setminus E^{\times p}$  and $F' =F[\sqrt[p]{b}]$. 
Assume that there exists $u \in bF^{\times p}\cap \mathcal{O}^{\times}_w$ such that $\ovl{u}\in Fw^{\times p}$.
Then there exists an extension $w'$ of $w$ to $F'$ such that $F'w'/Ev$ is non-ruled.
\end{lem}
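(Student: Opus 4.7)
The plan is to produce an extension $w'$ of $w$ to $F'$ whose residue field $F'w'$ is equal to $Fw$ itself; the non-ruledness of $Fw/Ev$ then transfers immediately to $F'w'/Ev$. First I would dispose of the trivial case: if $b\in F^{\times p}$, then by convention $F'=F$ and the choice $w'=w$ already works, so I assume from now on that $b\notin F^{\times p}$, which gives $[F':F]=p$.

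Next I would choose $c\in\mg F$ with $u=bc^p$, so that $F'=F[\alpha]$ for some $\alpha$ with $\alpha^p=u$. Since $b\notin F^{\times p}$ we also have $u\notin F^{\times p}$, so the polynomial $f:=T^p-u\in\mc{O}_w[T]$ is irreducible over $F$ and is thus the minimal polynomial of $\alpha$. Because $v(p)=0$ forces $p\neq \car(Fw)$ (and also $p\neq\car(F)$, making $F'/F$ separable), the reduction $\ovl f=T^p-\ovl u\in Fw[T]$ is separable. The hypothesis $\ovl u\in Fw^{\times p}$ provides some $\vartheta\in Fw$ with $\vartheta^p=\ovl u$, and then $q:=T-\vartheta$ is a simple linear irreducible factor of $\ovl f$ in $Fw[T]$.

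I would now apply \Cref{simple_ext} to the finite separable extension $F'=F[\alpha]$ of $F$, with the valuation $w$ on $F$, the polynomial $f$, and the irreducible factor $q$ of $\ovl f$: this yields an extension $w'$ of $w$ to $F'$ with $\alpha\in\mc{O}_{w'}$ and $\ovl\alpha=\vartheta$. Since $\vartheta$ is a simple root of $\ovl f$, the second assertion of \Cref{simple_ext} then gives $F'w'=Fw[\ovl\alpha]=Fw[\vartheta]=Fw$. Consequently $F'w'/Ev$ coincides with $Fw/Ev$, which is non-ruled by hypothesis. I do not anticipate any genuine obstacle: the whole argument is a direct invocation of \Cref{simple_ext} on the linear factor of $\ovl f$ secured by the hypothesis that $\ovl u$ is a $p$th power in $Fw$.
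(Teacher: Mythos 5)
Your proof is correct and follows essentially the same route as the paper: reduce to the polynomial $T^p-u$ with $u\in b F^{\times p}\cap\mg{\mc O}_w$, observe that $\ovl u\in Fw^{\times p}$ gives a simple (linear) root of the separable residue polynomial $T^p-\ovl u$, and apply \Cref{simple_ext} to obtain an extension $w'$ with $F'w'=Fw$. The only difference is that you spell out the details the paper leaves implicit (the degenerate case $b\in F^{\times p}$, the separability coming from $v(p)=0$, and the identification $F[\sqrt[p]{b}]=F[\sqrt[p]{u}]$), which is a sound elaboration rather than a different argument.
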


\begin{proof}
Let $u \in  bF^{\times p}\cap \mg{\mc O}_w$ such that $\ovl {u} \in Fw^{\times p} $.
Then $T^p -u \in \mc O_w[T]$ is a monic irreducible polynomial and
the residue polynomial $T^p -\ovl{u} \in Fw[T]$ has a simple root in $Fw$. Thus by \Cref{simple_ext}, there is an extension $w'$ of $w$  to $F'$ such that $F'w' =Fw$ and $wF=w'F'$. 
\end{proof}

\section{Function fields of conics} 

Let $E$ be a field and $v$ a valuation on $E$.
Given a function field in one variable $F/E$, we want to count and describe the extensions $w$ of $v$ to $F$ for which the residue field extension $Fw/Ev$ is a non-ruled function field in one variable.
In the lack of a method to approach this problem in general, one may consider this problem for special classes of function fields $F/E$. 
In this section, we will look at quadratic extensions of the rational function field $E(X)$ with residually transcendental extension of $v$ having special properties. We will further recall some results from \cite{BG21} for function fields of conics. These results will turn out to be useful later for the case of an elliptic function field. 

\begin{lem}\label{every-ele-trans}
Assume that $v(2) =0$.
Let $f\in E[X]$ and $F=E(X)[\sqrt{f}]$. 
Let $w$ be a residually transcendental extension of $v$ to $F$ and let $\ell$ be the relative algebraic closure of $Ev$ in $Fw$. Assume that $Fw \neq E(X)w$. Then $\ell \subseteq E(X)w$ if and only if, for every $\phi \in E(X)$ with $w(f\phi^2) =0$, we have that $\ovl{f\phi^2}$  is transcendental over~$Ev$.
\end{lem}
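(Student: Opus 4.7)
The plan is to establish both directions via contrapositives, exploiting the Galois structure of the quadratic extension $Fw/E(X)w$. Write $v_0 = w|_{E(X)}$. Since $[Fw:E(X)w] \leq [F:E(X)] = 2$ is finite and $Fw/Ev$ is transcendental, $v_0$ is itself residually transcendental. The assumption $Fw\neq E(X)w$ together with \Cref{funda_quad} forces $[Fw:E(X)w]=2$ and $wF=v_0E(X)$, and hence $v_0(f)\in 2v_0E(X)$; \Cref{BG2.2} then yields some $\phi\in E(X)$ with $f\phi^2\in \mc{O}_{v_0}^\times$, and for $\beta:=\sqrt{f}\,\phi\in F$ we obtain $Fw = E(X)w[\ovl{\beta}]$ with $\ovl{\beta}^2 = \ovl{f\phi^2}\notin (E(X)w)^{\times 2}$ and $\ovl{\beta}\notin E(X)w$. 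Let $\sigma$ denote the nontrivial element of $\gal(Fw/E(X)w)$; then $\sigma(\ovl{\beta})=-\ovl{\beta}$.

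For the forward direction I would argue contrapositively: if some $\phi\in E(X)$ with $w(f\phi^2)=0$ satisfied that $\ovl{f\phi^2}$ is algebraic over $Ev$, then the associated $\ovl{\beta}$ would be algebraic over $Ev$ as well and hence lie in $\ell$, while $\ovl{\beta}\notin E(X)w$; this contradicts $\ell\subseteq E(X)w$.

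The converse is the substantive direction, and I would again argue contrapositively. Pick $\xi\in\ell\setminus E(X)w$; as $[Fw:E(X)w]=2$, one has $Fw = E(X)w[\xi]$. Since $\sigma$ fixes $Ev$ and hence permutes the roots of the minimal polynomial of $\xi$ over $Ev$, one gets $\sigma(\xi)\in\ell$ and $c:=\xi+\sigma(\xi)\in\ell\cap E(X)w$. Using $\car(Ev)\neq 2$, I translate to $\eta:=\xi - c/2\in \ell$, which still generates $Fw$ over $E(X)w$ and now satisfies $\sigma(\eta)=-\eta$. Writing $\eta = a + h\ovl{\beta}$ with $a,h\in E(X)w$, the relation $\sigma(\eta)=-\eta$ forces $a=0$ and $h\neq 0$, so $\eta^2 = h^2\ovl{f\phi^2}$ is algebraic over $Ev$.

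The final and main technical step is to transport this relation from $Fw$ back to $E(X)$. I would lift $h$ via the residue surjection $\mc{O}_{v_0}\twoheadrightarrow E(X)w$ to some $\psi\in E(X)$ with $w(\psi)=0$ and $\ovl{\psi}=h$ (automatic once $h\neq 0$), and then $\phi':=\psi\phi$ satisfies $w(f(\phi')^2)=0$ and $\ovl{f(\phi')^2}=h^2\ovl{f\phi^2}=\eta^2$, which is algebraic over $Ev$—contradicting the hypothesis. The only genuine obstacle in this plan is keeping the three layers straight, namely elements of $F$, their residues in $Fw$, and lifts via the residue map back to $E(X)$; the Galois manipulations are otherwise routine once the correct element $\eta$ has been isolated.
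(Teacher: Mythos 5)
Your proof is correct and follows essentially the same route as the paper's: both directions reduce to the element $\ovl{f\phi^2}$ produced by \Cref{BG2.2}, and the converse isolates a generator of $Fw/E(X)w$ lying in $\ell$ whose square lies in $E(X)w$, then lifts the coefficient $h$ back to $E(X)$. The only cosmetic difference is that you extract this generator via the Galois involution and the trace, whereas the paper simply invokes $[\ell:\ell\cap E(X)w]=2$ and $\car(Ev)\neq 2$ to get $\gamma\in\ell\setminus\ell_0$ with $\gamma^2\in\ell_0$; the underlying computation is identical.
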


\begin{proof}
Since $Fw \neq E(X)w$, by \Cref{funda_quad}, we have that 
$$[Fw:\ell E(X)w]\cdot [\ell E(X)w : E(X)w]=[Fw:E(X)w]= 2\,.$$
It follows by \Cref{BG2.2} that there exists $\phi\in E(X)$ with $w(f\phi^2)=0$ and 
$$F{w}=E(X){w}\Big[\sqrt{\overline{f\phi^2}}\Big]\,.$$ 
If $\ovl{f\phi^2}$ is algebraic over $Ev$, then $\sqrt{\ovl{f\phi^2}}\in \ell$, whence
$\ell E(X)w=Fw\neq E(X)w$, that is $\ell\not\subseteq E(X)w$.

Assume now conversely that $\ell \not \subseteq E(X)w$.
Then $[\ell E(X)w:E(X)w]=2$ and $Fw = \ell E(X)w$.
Let $\ell_0 = \ell \cap E(X)w$.
 Then $[\ell : \ell_0] =2$. Hence there exists $\gamma \in \ell \setminus \ell_0$ with $\gamma^2\in \ell_0$. 
Hence we may choose $\phi \in E(X)$ with $w(f\phi^2)=0$ in such a way that $\gamma^2 = \ovl{f \phi^2 }$ and with this choice, $\ovl{f\phi^2}$ is algebraic over $Ev$.
\end{proof}

For a separable quadratic field extension $E'/E$ and a residually transcendental extension $w$ of $v$ to $E'(X)$,
the relation between the Ohm indices of $w$ and $w|_{E(X)}$ was 
described in \cite[Lemma 2.8]{BG21}.
We now obtain an analogue for the case of a quadratic extension $F/E(X)$ such that $F/E$  is rational.

\begin{prop}
 \label{a_version_of_RRT_2.8}
  Assume that $v(2) =0$.
   Let $F= E(\sqrt{X})$. 
   Let $w$ be a residually transcendental extension of $v$ to $F$.
   Let $\ell$ be the relative algebraic closure of $Ev$ in $Fw$.
 Assume that $Fw \neq E(X)w$ and $\ell \subseteq E(X)w$.
Then there exists an element $\phi \in E(X)^{\times}$ such that $\sqrt{X}\phi$ is an Ohm element of $w$ for $F/E$ and  $E(X)w = \ell\big(\ovl{{X}\phi^2}\big)$. 
\end{prop}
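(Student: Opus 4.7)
The plan is to apply Ohm's Ruled Residue Theorem (\Cref{RRT}) to $w$ on the rational function field $F=E(\sqrt{X})$, viewed as having transcendence generator $\sqrt{X}$ over $E$, and then to argue that the Ohm element obtained can be chosen of the special form $\sqrt{X}\phi$ with $\phi\in E(X)^{\times}$.

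Setting $v'=w|_{E(X)}$, the hypothesis $Fw\neq E(X)w$ together with \Cref{funda_quad} yields $[Fw:E(X)w]=2$ and $wF=v'E(X)$, so $v'(X)\in 2v'E(X)$. Picking $\phi_0\in E(X)^{\times}$ with $v'(X\phi_0^2)=0$, \Cref{BG2.2} gives $Fw=E(X)w[\sqrt{\overline{X\phi_0^2}}]$ with $\overline{X\phi_0^2}\notin (E(X)w)^{\times 2}$. The hypothesis $\ell\subseteq E(X)w$ and \Cref{every-ele-trans} then imply that $\overline{X\phi^2}$ is transcendental over $Ev$ for every $\phi\in E(X)^{\times}$ with $w(X\phi^2)=0$.

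Applying \Cref{RRT} to $w$ on $F/E$ produces an Ohm element $Z\in F^{\times}$ with $Fw=\ell(\overline Z)$. Since $\overline Z\notin E(X)w$ (as $\ell\subseteq E(X)w\subsetneq Fw$), necessarily $Z\notin E(X)$, and in the $E(X)$-basis $\{1,\sqrt{X}\}$ of $F$ we may write $Z=A+\sqrt{X}\cdot B$ with $A,B\in E(X)$ and $B\neq 0$. The goal is to show that (after possibly modifying $Z$) the element $\sqrt{X}\cdot B$ is itself Ohm, so that $\phi:=B$ works. Using the Galois involution $\sigma$ of $F/E(X)$ given by $\sqrt{X}\mapsto -\sqrt{X}$ together with the $E(X)$-decomposition $F=E(X)\oplus\sqrt{X}\cdot E(X)$, one verifies $E(\sqrt{X}B)\cap E(X)=E(XB^2)$, whence $[F:E(\sqrt{X}B)]=[E(X):E(XB^2)]$. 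Moreover, the residue $\overline{\sqrt{X}B}$ is transcendental over $Ev$ since its square $\overline{XB^2}$ is, by the preceding paragraph applied to $\phi=B$.

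The hard part is the degree equality $[F:E(\sqrt{X}B)]=\mathrm{ind}(w,F/E)$. Since $E(Z)\cap E(X)$ contains both the trace $Z+\sigma Z=2A$ and the norm $Z\sigma Z=A^2-XB^2$, we have $E(A,XB^2)\subseteq E(Z)\cap E(X)$ together with $[E(X):E(Z)\cap E(X)]=[F:E(Z)]=\mathrm{ind}(w,F/E)$. The plan is therefore to arrange $A\in E(XB^2)$ by suitably modifying $Z$ via a fractional-linear adjustment, exploiting the fact that Ohm elements form a $\mathrm{PGL}_2$-orbit on the residue side; this yields $E(Z)\cap E(X)=E(XB^2)$ and hence the desired equality. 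Once $\sqrt{X}\phi$ is known to be an Ohm element, \Cref{RRT} gives $Fw=\ell(\overline{\sqrt{X}\phi})$, and the chain $\ell(\overline{X\phi^2})\subseteq E(X)w\subseteq Fw$ combined with $[\ell(\overline{\sqrt{X}\phi}):\ell(\overline{X\phi^2})]=2=[Fw:E(X)w]$ forces $E(X)w=\ell(\overline{X\phi^2})$, completing the proof.
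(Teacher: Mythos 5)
Your setup (reducing via \Cref{funda_quad}, \Cref{BG2.2} and \Cref{every-ele-trans} to the statement that $\ovl{X\phi^2}$ is transcendental whenever $w(X\phi^2)=0$, noting that an Ohm element $Z$ satisfies $\ovl Z\notin E(X)w$, and the concluding degree count once an Ohm element of the form $\sqrt{X}\phi$ is in hand) is correct and matches the paper. But the crux --- producing an Ohm element of the form $\sqrt{X}\phi$ --- is not proved, and the partial argument you give for it is wrong. Writing $Z=A+\sqrt{X}B$, you assert that $E(Z)\cap E(X)$ contains the trace $Z+\sigma Z=2A$ and the norm $Z\sigma Z=A^2-XB^2$. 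These lie in $E(X)$, but there is no reason for them to lie in $E(Z)$: that would require $\sigma Z\in E(Z)$, i.e.\ that $\sigma$ preserve the subfield $E(Z)$, which fails in general (e.g.\ for $Z=\sqrt{X}+X$ one has $2A=2X\notin E(Z)$, since $X\in E(Z)$ would force $E(Z)=E(X)$, contradicting $Z\notin E(X)$). So $E(A,XB^2)\subseteq E(Z)\cap E(X)$ is unjustified, and the ensuing plan to ``arrange $A\in E(XB^2)$ by a fractional-linear adjustment, exploiting that Ohm elements form a $\mathrm{PGL}_2$-orbit'' is not an argument: no such orbit description is established or used anywhere in the paper, and a fractional-linear change of $Z$ alters $A$ and $B$ simultaneously in a way you do not control. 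You have explicitly labelled this the hard part and then not done it.

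For comparison, the paper closes this gap with a concrete computation: it writes an Ohm element as $\theta=\theta_1/\theta_2$ with coprime $\theta_1,\theta_2\in E[\sqrt{X}]$, splits $\theta_1=f_0+\sqrt{X}f_1$ and $\theta_2=g_0+\sqrt{X}g_1$, and invokes the degree formula $[F:E(\theta)]=\max\{\deg_{\sqrt{X}}\theta_1,\deg_{\sqrt{X}}\theta_2\}$ of \cite[Proposition~2.1]{BG21}. If $w(f_0)=w(\sqrt{X}f_1)$ then $\sqrt{X}f_1f_0^{-1}$ has transcendental residue by \Cref{every-ele-trans} and degree at most $\ind(w,F/E)$ by the formula, hence is already an Ohm element of the required shape (similarly for $\theta_2$); otherwise one discards the summand of larger value in each of $\theta_1,\theta_2$ without affecting the Ohm property, reducing to $\theta=\phi$ or $\theta=\sqrt{X}\phi$ with $\phi\in E(X)^\times$, and the first case is excluded by \Cref{RRT} because $\ell(\ovl{\phi})\subseteq E(X)w\neq Fw$. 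Any repair of your route needs a substitute for this degree control; the field-intersection bookkeeping as written does not supply one. (A minor further point: before applying the transcendence statement to $\ovl{XB^2}$ you must verify $w(\sqrt{X}B)=0$; this does hold, since $w(\sqrt{X}B)>0$ would give $\ovl Z=\ovl A\in E(X)w$, but it should be said.)
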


\begin{proof} 
 Let $\theta \in F$ be an Ohm element of $w$ for $F/E$.
 Write  $\theta = \frac{\theta_1}{\theta_2}$ with coprime polynomials $\theta_1, \theta_2\in E[\sqrt{X}]$ such that $\theta_1$ is monic.
Let $f_0, f_1, g_0, g_1\in E[X]$ be such that $\theta_1=f_0+\sqrt{X}f_1 $ and $\theta_2= g_0+\sqrt{X}g_1$.
Since $\theta$ is an Ohm element of $w$ for $F/E$, we have $\ind(w,F/E)=[F : E(\theta)]$, and
by \cite[Proposition 2.1]{BG21}, we have 
\begin{align*}
[F:E(\theta)] &= \max \{\deg_{\sqrt{X}}(\theta_1), \deg_{\sqrt{X}}(\theta_2)\}\\ 
&=  \max \{\deg_{\sqrt{X}}(f_0), \deg_{\sqrt{X}}(\sqrt{X}f_1), \deg_{\sqrt{X}}(g_0), \deg_{\sqrt{X}}(\sqrt{X}g_1) \} .
\end{align*}
Assume that $w(f_0)=w(\sqrt{X}f_1)$.
By \Cref{every-ele-trans}, we get that $\overline{\sqrt{X}f_1f^{-1}_0}$ is transcendental over $Ev$.
By \cite[Proposition 2.1]{BG21},  
 $$[F:E(\sqrt{X}f_1f^{-1}_0)] \leq \max\{\deg_{\sqrt{X}}(f_0), \deg_{\sqrt{X}}(\sqrt{X}f_1)\} \leq \ind(w,F/E).$$
We conclude that $\sqrt{X}f_1f^{-1}_{0}$ is an Ohm element of $w$ for $F/E$.
Similarly, if $w(g_0)=w(\sqrt{X}g_1)$, then
$\sqrt{X}g_1g^{-1}_{0}$ is an Ohm element of $w$ for $F/E$.

We now assume that $w(f_0)\neq w(\sqrt{X}f_1)$ and
$w(g_0)\neq w(\sqrt{X}g_1)$.
In this case, the property of $\theta$ to be an Ohm element of $w$ for $F/E$
is not affected if we replace $\theta_1$ and $\theta_2$ by the element of smaller $w$-value among the pairs $(f_0,\sqrt{X}f_1)$ and $(g_0,\sqrt{X}g_1)$, respectively.
We may further r eplace $\theta$ by $\theta^{-1}$, if necessary.
After these changes, if required, $\theta$ is of the form $\phi$ or
$\sqrt{X}\phi$ for some $\phi \in E(X)^{\times}$.

Since $\ell(\ovl{\phi}) \subseteq E(X){w}$ and $Fw\neq E(X)w$, we have $Fw\neq \ell(\ovl{\phi})$.
As $F/E$ is a rational function field in one variable, we conclude by \Cref{RRT} that $\phi$ cannot be an Ohm element of $w$ for $F/E$.
Therefore we have $\theta =\sqrt{X}\phi$ and $F{w} =\ell(\ovl{\theta})$.
Since $\ell(\ovl{\theta}^2) \subseteq E(X){w}$ and $[F{w}: E(X){w}] =2 = [\ell(\ovl{\theta}): \ell(\ovl{\theta}^2)]$, we get that $E(X){w} = \ell(\ovl{\theta}^2) =\ell (\ovl{X\phi^2})$. 
\end{proof}

We apply the above result to quadratic extensions of $E(X)$ and obtain sufficient conditions for $Fw/Ev$ being ruled.

\begin{cor}
\label{ruled}
Assume that $v(2) =0$.
Let $f\in E[X]$ and $F=E(X)[\sqrt{f}]$. 
Let $w$ be a residually transcendental extension of $v$ to $F$ and let $\ell$ be the relative algebraic closure of $Ev$ in $Fw$. Assume that $wE(X)=wE(f)$ and $E(X)w\subseteq\ell E(f)w$.
Then $Fw/Ev$ is ruled. 
\end{cor}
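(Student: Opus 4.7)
The plan is as follows. The restriction $w|_{E(X)}$ is residually transcendental (because $F/E(X)$ has degree at most $2$ and $Fw/Ev$ is a function field in one variable), so by \Cref{RRT} the field $E(X)w$ is ruled over $Ev$. If $Fw=E(X)w$, we are done, so we may assume $Fw\neq E(X)w$; then by \Cref{funda_quad} we have $[Fw:E(X)w]=2$, whence $f\notin E(X)^{\times 2}$ and $w(f)\in 2wE(X)$, and \Cref{BG2.2} supplies an element $u\in f E(X)^{\times 2}\cap\mathcal O_w^\times$ with $\bar u\notin (E(X)w)^{\times 2}$ and $Fw=E(X)w[\sqrt{\bar u}]$.

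The first key step uses the hypothesis $wE(X)=wE(f)$. Writing $u=f\phi^2$ with $\phi\in E(X)^\times$, we have $w(\phi)=-\tfrac{1}{2}w(f)\in wE(X)=wE(f)$, so I would choose $\phi'\in E(f)^\times$ with $w(\phi')=w(\phi)$ and set $u'=f\phi'^2\in E(f)^\times\cap\mathcal O_w^\times$. Since $u/u'=(\phi/\phi')^2\in E(X)^{\times 2}$, this gives $Fw=E(X)w[\sqrt{\bar{u'}}]$ with $\bar{u'}\notin (E(X)w)^{\times 2}$, and since $E(f)w\subseteq E(X)w$ also $\bar{u'}\notin (E(f)w)^{\times 2}$.

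The second key step is to recognize that $E(f)[\sqrt{u'}]=E(f)[\phi'\sqrt f]=E[\sqrt f]$, which is a rational function field over~$E$. Applying \Cref{BG2.2} a second time, now to the quadratic extension $E[\sqrt f]/E(f)$ at $a=u'$, gives $E[\sqrt f]w=E(f)w[\sqrt{\bar{u'}}]$. Since $w|_{E[\sqrt f]}$ is residually transcendental (again by finiteness of $F/E[\sqrt f]$ and the fundamental inequality), \Cref{RRT} yields $E[\sqrt f]w=\ell_2(\bar Z)$ for some Ohm element $Z$, with $\ell_2$ the relative algebraic closure of $Ev$ in $E[\sqrt f]w$; in particular $\ell_2\subseteq\ell$.

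To conclude, the hypothesis $E(X)w\subseteq\ell E(f)w$ yields
$$Fw = E(X)w[\sqrt{\bar{u'}}] \subseteq \ell\cdot E(f)w[\sqrt{\bar{u'}}] = \ell\cdot E[\sqrt f]w = \ell(\bar Z),$$
while the reverse inclusion holds because $\ell\subseteq Fw$ and $\bar Z\in E[\sqrt f]w\subseteq Fw$. Thus $Fw=\ell(\bar Z)$ is a rational function field over the finite extension $\ell/Ev$, so $Fw/Ev$ is ruled. The main subtle point is the transition from $u\in E(X)^\times$ to $u'\in E(f)^\times$ via the value-group hypothesis, together with the identification $E(f)[\sqrt{u'}]=E[\sqrt f]$; this is what lets \Cref{BG2.2} and \Cref{RRT} be brought to bear on the rational subfield $E[\sqrt f]$ of $F$.
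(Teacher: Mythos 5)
Your proof is correct, and it takes a genuinely different route from the paper's. The paper first disposes of the case $\ell\not\subseteq E(X)w$ by invoking \cite[Lemma 3.2]{BG21}, then, assuming $\ell\subseteq E(X)w$, uses the transcendence criterion of \Cref{every-ele-trans} (applied to both $F/E(X)$ and $E(\sqrt f)/E(f)$) together with \Cref{a_version_of_RRT_2.8} to produce an explicit generator: $E(f)w=\ell'\bigl(\ovl{f\phi^2}\bigr)$ for some $\phi\in E(f)^\times$, after which the hypothesis and \Cref{BG2.2} give $Fw=\ell\bigl(\sqrt{\ovl{f\phi^2}}\bigr)$. You avoid all three of these ingredients. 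Your two substitutes are (1) the descent of the square-class representative from $E(X)^\times$ to $E(f)^\times$, which is exactly where the hypothesis $wE(X)=wE(f)$ enters (the paper uses that hypothesis for the same purpose, but routed through \Cref{every-ele-trans}), and (2) applying plain \Cref{RRT} to the rational function field $E(\sqrt f)$ and then letting the compositum computation $Fw=E(X)w[\sqrt{\ovl{u'}}]\subseteq\ell\, E(f)w[\sqrt{\ovl{u'}}]=\ell\, E(\sqrt f)w=\ell(\ovl Z)\subseteq Fw$ absorb the hypothesis $E(X)w\subseteq\ell E(f)w$. A pleasant by-product is that your argument is uniform in $\ell$: you never need the dichotomy on whether $\ell\subseteq E(X)w$, so the external reference to \cite[Lemma 3.2]{BG21} disappears, and you do not need the explicit Ohm element of \Cref{a_version_of_RRT_2.8} (whose identification is the most delicate part of the paper's route); the paper's approach, in exchange, yields the concrete generator $\ovl{f\phi^2}$ of $E(X)w$ over $\ell$, which is reused elsewhere (e.g.\ in \Cref{u_algebraic_implies_ruled}). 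Two points you leave implicit but which are harmless: the hypotheses force $f\notin E$ (otherwise $E(X)w\subseteq\ell E(f)w=\ell$ would be algebraic over $Ev$), so $E(\sqrt f)/E$ really is a rational function field; and $\ell/Ev$ is finite because $Fw/Ev$ is a function field in one variable, so $Fw=\ell(\ovl Z)$ is indeed ruled in the sense defined.
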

\begin{proof}
If $Fw=E(X)w$ then $Fw/Ev$ is ruled by \Cref{RRT}. Thus we may assume that $Fw \neq E(X)w$. We note that in this case, $[Fw:E(X)w]=2$ and $wF =wE(X)$ by \Cref{funda_quad}.
If $\ell\not\subseteq E(X)w$, then it follows by \cite[Lemma 3.2]{BG21} that $Fw/Ev$ is ruled.
Hence we may assume now that $\ell\subseteq E(X)w$. 

As $wF=wE(X) = wE(f)$ and $f\in \sq{F}$, we have $w(f) \in2wE(X) = 2wE(f)$.
By \Cref{every-ele-trans}, for every $\phi\in E(X)$ such that $w(f\phi^2) =0$, we get that $\ovl{f\phi^2}$ is transcendental over $Ev$.
Let $\ell'$ be the relative algebraic closure $Ev$ in $E(\sqrt{f})w$.
Since $E(f) \subseteq E(X)$, we have that $\ovl{f\phi^2}$ is transcendental over $Ev$ for every $\phi\in E(f)$ with $w(f\phi^2) =0$. 
Hence  applying \Cref{every-ele-trans} to the field extension $E(\sqrt f) /E(f)$, we obtain that $\ell' \subseteq E(f)w$.
By \Cref{a_version_of_RRT_2.8}, we obtain that $E(f)w = \ell'(\ovl{f\phi^2})$ for some $\phi \in E(f)^{\times}$ with $w(f\phi^2)=0$.

In view of the hypothesis and using that $\ell'\subseteq\ell \subseteq E(X)w$, we conclude that $$E(X)w = \ell E(f)w = \ell\Big(\ovl{f\phi^2}\Big)\,.$$
By \Cref{BG2.2} we get that $$Fw =E(X)w\Big[ \sqrt{\ovl{f\phi^2}}\Big] = \ell \Big(\sqrt{\ovl{f\phi^2}}\Big)\,,$$ whereby $Fw/Ev$ is ruled.
\end{proof}

\begin{cor}
\label{ruled_applicable}
Assume that $v(2) =0$.
Let $w$ be a residually transcendental extension of $v$ to $E(X)$.
Let $f\in E[X]$. 
Let $\theta \in E(X)$ be such that $E(X) = E(f)[\theta]$ and let $\Pol_\theta \in E(f)[T]$ be the minimal polynomial of $\theta$ over $E(f)$.
Assume that $ \Pol_\theta  \in \mc{O}_w[T]$ and that
$\ovl{\theta}$ is a simple root of $\ovl{\Pol_\theta}$.
Then $wE(X) =wE(f)$ and $E(X)w =E(f)w[\ovl{\theta}]$.
Furthermore, if $\ovl{\theta}$ is  algebraic over $Ev$, then $E(X)[\sqrt{f}]w'/Ev$ is ruled for any extension $w'$ of $w$ to $E(X)[\sqrt{f}]$.
\end{cor}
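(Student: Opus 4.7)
The plan is to obtain the first assertion by directly invoking Lemma~\ref{simple_ext} on the finite extension $E(X)=E(f)[\theta]$ over $E(f)$, equipped with the valuation $w|_{E(f)}$. Since $\Pol_\theta$ is monic with coefficients in $\mc{O}_w\cap E(f)$ and $\theta$ is a root, integrality (together with the fact that valuation rings are integrally closed) forces $\theta\in\mc{O}_w$, so $\ovl\theta\in E(X)w$ makes sense. The hypothesis that $\ovl\theta$ is a simple root of $\ovl{\Pol_\theta}$ implies in particular that $\Pol_\theta$ is separable, hence $E(X)/E(f)$ is separable, as required by Lemma~\ref{simple_ext}. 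The ``furthermore'' clause of that lemma then delivers at once that $w$ is unramified over $w|_{E(f)}$, i.e.\ $wE(X)=wE(f)$, and that $E(X)w = E(f)w[\ovl{\theta}]$.

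For the second assertion, assume now that $\ovl\theta$ is algebraic over $Ev$ and let $w'$ be an arbitrary extension of $w$ to $F=E(X)[\sqrt f]$. Since $w$ is residually transcendental on $E(X)$ and $[F:E(X)]\leq 2$, the residue extension $Fw'/E(X)w$ is finite, so $w'$ is itself residually transcendental on $F$ and $Fw'/Ev$ is a function field in one variable to which the notion of ruledness applies. Let $\ell$ denote the relative algebraic closure of $Ev$ in $Fw'$. The first part of the corollary yields $w'E(X)=w'E(f)$ and $E(X)w=E(f)w[\ovl\theta]$, and the algebraicity of $\ovl\theta$ over $Ev$ places $\ovl\theta$ inside $\ell$, so that $E(X)w\subseteq \ell\cdot E(f)w$. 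The two hypotheses of Corollary~\ref{ruled} are therefore verified for the valuation $w'$ on $F$, and that corollary gives that $Fw'/Ev$ is ruled.

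The argument is almost entirely bookkeeping: the only genuinely substantive step is the use of the algebraicity of $\ovl\theta$ over $Ev$ to place $\ovl\theta$ in $\ell$, which is precisely what produces the residue-field inclusion $E(X)w\subseteq \ell\cdot E(f)w$ demanded by Corollary~\ref{ruled}. The main conceptual obstacle I foresee is simply the correct identification of base fields --- Lemma~\ref{simple_ext} is applied to the extension $E(X)/E(f)$, whereas Corollary~\ref{ruled} is applied to $F=E(X)[\sqrt f]$ with the valuation $w'$ --- so one has to be careful that the value-group equality and residue-field inclusion proved for $w$ transport verbatim to $w'$, which they do because $w'|_{E(X)}=w$ and $w'|_{E(f)}=w|_{E(f)}$.
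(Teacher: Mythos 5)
Your proof is correct and follows exactly the paper's (very terse) argument: the first assertion is the ``Furthermore'' clause of Lemma~\ref{simple_ext} applied to $E(X)=E(f)[\theta]$ over $E(f)$, and the second is Corollary~\ref{ruled} applied to $w'$ on $E(X)[\sqrt f]$, with the algebraicity of $\ovl\theta$ supplying the inclusion $E(X)w=E(f)w[\ovl\theta]\subseteq\ell E(f)w$. The details you supply (integrality giving $\theta\in\mc O_w$, the simple-root hypothesis forcing separability of $\Pol_\theta$, and the transport of the hypotheses from $w$ to $w'$) are all sound.
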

\begin{proof}
The first statement follows by \Cref{simple_ext}.
The second statement follows by \Cref{ruled}.
\end{proof}

\begin{prop}
 \label{u_algebraic_implies_ruled}
 Assume that $v(2)=0$.
 Let $u\in E(X)$ and $F=E(X)[\sqrt{uX}]$. 
Let $w$ be a residually transcendental extension of $v$ to $F$ with  $w(u)=0$
and such that $\ovl{u}$ is algebraic over $Ev$. 
Then $Fw/Ev$ is ruled. 
 \end{prop}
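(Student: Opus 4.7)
My plan is to apply \Cref{equal_values_ruled} with $p=2$, decomposing $uX = u_0X + (u - u_0)X$ where $u_0 \in \mc{O}_v \cap E$ is a lift of $\overline{u}$; the ``dominant'' summand $u_0 X$ then generates a rational function field over $E$, to which \Cref{RRT_corollary} applies.

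If $uX \in E(X)^{\times 2}$ then $F = E(X)$ and the conclusion is immediate from \Cref{RRT_corollary}, so assume otherwise. First consider the subcase $\overline{u} \in Ev$: pick $u_0 \in \mc{O}_v$ with $\overline{u_0} = \overline{u}$ and set $f := u_0 X$, $g := (u - u_0) X$, so that $f + g = uX$. Since $\overline{u - u_0} = 0$, we have $w(u - u_0) > 0$, and hence $w(f) = w(X) < w(X) + w(u - u_0) = w(g)$. The field $F' := E(X)[\sqrt{u_0 X}]$ equals $E(\sqrt{u_0 X})$ (because $u_0 \in E^\times$ gives $X = (\sqrt{u_0 X})^2/u_0 \in E(\sqrt{u_0 X})$), which is a rational function field over $E$; thus $F'w'/Ev$ is ruled for every residually transcendental extension $w'$ of $v$ to $F'$, by \Cref{RRT_corollary}. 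Then \Cref{equal_values_ruled} yields $Fw/Ev$ ruled.

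For the general case, where $\overline{u}$ is algebraic over $Ev$ but need not lie in $Ev$, I would pass to a finite extension $E'/E$ equipped with a valuation $v'$ extending $v$ such that $\overline{u} \in E'v'$; such $E'$ is obtained by adjoining a root of a monic lift of the minimal polynomial of $\overline{u}$, with $v'$ selected via \Cref{simple_ext} so that the new generator reduces to $\overline{u}$. Assuming $F/E$ is regular, $FE' := F \otimes_E E'$ is a field; choose an extension $w''$ of $w$ to $FE'$ with $w''|_{E'} = v'$. The preceding argument applied to $FE'/E'$ with $v'$ gives $FE'w''/E'v'$ ruled, and since $E'v'/Ev$ is finite, $FE'w''/Ev$ is then also ruled. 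The main difficulty will be the descent from $FE'w''$ down to $Fw$: one has $Fw \subseteq FE'w''$, and one needs the fact that any function field in one variable over $Ev$ contained in a ruled function field in one variable is itself ruled. This can be obtained from the classical genus-$0$/Brauer--Severi picture by base-changing to $\overline{Ev}$, applying Lüroth's theorem to each component to see that the components of $Fw \otimes_{Ev} \overline{Ev}$ are rational, and concluding that $Fw$ is geometrically rational, hence ruled over $Ev$.
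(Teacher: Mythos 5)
Your argument for the special case $\ovl{u}\in Ev$ is correct and is genuinely different from the paper's: lifting $\ovl{u}$ to $u_0\in\mg{\mc{O}}_v\cap E$, writing $uX=u_0X+(u-u_0)X$ and invoking \Cref{equal_values_ruled} with $F'=E(\sqrt{u_0X})$ rational is a clean reduction to \Cref{RRT_corollary}. The paper instead never leaves $F$: it works with the relative algebraic closure $\ell$ of $Ev$ in $Fw$ (which contains $\ovl{u}$), uses \Cref{every-ele-trans} and \Cref{a_version_of_RRT_2.8} to write $E(X)w=\ell(\vartheta)$ with $\vartheta=\ovl{X\phi^2}$, and concludes $Fw=\ell(\sqrt{\ovl{u}\vartheta})$. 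The point of that extra machinery is precisely to handle $\ovl{u}$ algebraic over $Ev$ but not in $Ev$ without any base change.

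The second half of your proposal, however, has a genuine gap. The descent principle you invoke --- ``any function field in one variable over $Ev$ contained in a ruled function field in one variable is itself ruled'' --- is false, and the justification you sketch for it (``geometrically rational, hence ruled'') is also false. Take $Ev=\rr$: the function field $K=\rr(X)[\sqrt{-1-X^2}]$ of the conic $\mc{C}_{-1,-1}$ embeds into $K\otimes_\rr\cc\simeq\cc(t)$, which is ruled over $\rr$, yet $K/\rr$ is not ruled by \Cref{conicsplit}, since $\mc{C}_{-1,-1}$ has no real point. The same example shows that geometric rationality does not imply ruledness; indeed the whole paper is about non-ruled residue fields that are geometrically rational (conics without rational points). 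So your reduction from $FE'w''$ back down to $Fw$ does not go through as stated. A repair in the spirit of the paper would be to choose the extension $w''$ of $w$ to $FE'$ not arbitrarily but via \Cref{simple_ext} (as in \Cref{scalarext-noresidueext_for_p}), exploiting that $\ovl{u}$ already lies in $Fw$ so that one can arrange $FE'w''=Fw$ and no descent is needed; but even this requires an extra argument (e.g.\ separability of $\ovl{u}$ over $Ev$, and that $FE'$ is a field), none of which your write-up addresses.
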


\begin{proof}
If $Fw=E(X)w$ then $Fw/Ev$ is ruled by \Cref{RRT}. Thus we may assume that $Fw \neq E(X)w$. Then $[Fw:E(X)w]=2$ and $wF =wE(X)$ by \Cref{funda_quad}. 
Hence $w(X)=w(uX)\in 2wF =2wE(X)$.
Let $\ell$ denote the relative algebraic closure of $Ev$ in $Fw$.
In view of \cite[Lemma 3.2]{BG21}, we may further assume that $\ell\subseteq E(X)w$.

Let $w'$ be an extension of $w|_{E(X)}$ to $E(\sqrt{X})$.
For every $\phi\in E(X)^{\times}$ with $w(X\phi^2)=0$, it follows by \Cref{every-ele-trans} that $\ovl {u X\phi^2}$ is transcendental over $Ev$, whereby $\ovl {X\phi^2} $ is transcendental over $Ev$, because $\ovl{u}\in \ell$. 
Using \Cref{every-ele-trans} now for the extension $E(\sqrt{X})/E(X)$, we get that $\ell$ is relatively algebraically closed in $E(\sqrt{X})w'$.

By \Cref{a_version_of_RRT_2.8},  there exists $\phi \in E(X)$ with $w(X\phi^2) =0$ such that, for $\vartheta=\ovl{X\phi^2}$, we have   $E(X)w=\ell(\vartheta)$.
Hence using \Cref{BG2.2} and because $\ovl{u}\in\ell$, we get that  
$$Fw = E(X)w[\sqrt{\ovl{u}\vartheta}]=\ell(\vartheta)[\sqrt{\ovl{u}\vartheta}]=\ell (\sqrt{\ovl{u}\vartheta})\,.$$  Hence $Fw/Ev$ is ruled.
\end{proof}

We conclude this section by a short discussion of our problem in the case of function fields of conics.

Recall that we assume that $\car(E) \neq 2$.
Consider $a,b\in \mg E$.
We associate the plane affine conic 
$$\mathcal C_{a,b}:Y^2 = aX^2 +b\,\,.$$
We observe that this curve is smooth and that its function field over $E$ is given by $E(X)[\sqrt{aX^2 +b}]$. 
We recall the following basic fact:

\begin{prop} \label{conicsplit}
Let  $a,b \in \mg E$. The following are equivalent:
\begin{enumerate}[$(i)$]
    \item $ \mc C_{a,b}$ has a rational point over $E$.
    \item $E(X) [\sqrt{aX^2 +b}]/E$ is a rational function field.
    \item $E(X) [\sqrt{aX^2 +b}]/E$ is ruled.
\end{enumerate}
\end{prop}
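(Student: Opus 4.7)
The plan is to establish the cyclic implications $(i) \Rightarrow (ii) \Rightarrow (iii) \Rightarrow (i)$. The implication $(ii) \Rightarrow (iii)$ is immediate from the definitions, by taking $E' = E$.

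For $(i) \Rightarrow (ii)$, I would use the classical parametrization of a smooth conic by the pencil of lines through a rational point. Given $(x_0, y_0) \in E^2$ with $y_0^2 = a x_0^2 + b$, substituting $Y = y_0 + t(X - x_0)$ into $Y^2 = aX^2 + b$ produces a quadratic in $X - x_0$ with one root equal to $0$; Vieta's formulas then express both coordinates of the other intersection point as explicit rational functions of $t$. Setting $t = (Y - y_0)/(X - x_0) \in F$ yields $F = E(t)$.

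For $(iii) \Rightarrow (i)$, the key preliminary observation is that $F/E$ is a regular extension. Since $a, b \in \mg E$, the polynomial $aX^2 + b$ has two distinct roots in $\bar E$ and so is not a square in $\bar E(X)$; therefore $Y^2 - (aX^2 + b)$ remains irreducible over $\bar E(X)$ and $\mc C_{a,b}$ is geometrically integral. Combined with smoothness, this shows that $E$ is algebraically closed in $F$. Now if $F = E'(x)$ with $E'/E$ finite, then $E' \subseteq F$ is algebraic over $E$, hence $E' = E$, so $F = E(x)$ is a rational function field. To produce a rational point on the affine conic from this rational parametrization, I would observe that inside $F = E(X)[\sqrt{aX^2+b}]$ the function $X$ has degree $[F:E(X)] = 2$ and therefore at most two poles among the degree-one places of $F/E$. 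Since $F \simeq E(x)$ admits at least $|E| + 1$ degree-one places, and $\car E \neq 2$ forces $|E| \geq 3$, some degree-one place is a pole of neither $X$ nor $Y = \sqrt{aX^2+b}$; the residues of $X$ and $Y$ at such a place yield a rational point on $\mc C_{a,b}$.

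The main obstacle is the step $(iii) \Rightarrow (i)$: the regularity argument upgrades ruledness to rationality, and then a counting argument is needed to locate a degree-one place that avoids the points at infinity on the projective closure. The other implications are classical and essentially formal.
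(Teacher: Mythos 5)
Your proposal is correct and follows essentially the same route as the paper: the paper cites the classical equivalence of $(i)$ and $(ii)$ (Gille--Szamuely, Remark 1.3.5) and deduces $(ii)\Leftrightarrow(iii)$ from the fact that $E$ is relatively algebraically closed in $E(X)[\sqrt{aX^2+b}]$, which is exactly the content of your regularity argument. Your line-parametrization and degree-one-place counting are simply a self-contained substitute for the cited classical fact, and they check out (in particular, the poles of $Y$ lie over those of $X$, and $\car E\neq 2$ guarantees enough degree-one places).
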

\begin{proof}
See \cite[Remark 1.3.5]{GS06} for the equivalence of $(i)$ and $(ii)$.
Since $E$ is rela\-tively algebraically closed in $E(X)[\sqrt{aX^2 +b}]$, $(ii)$ and $(iii)$ are equivalent.
\end{proof}

We recall the following extension of \Cref{RRT} to the case of function fields of conics obtained in \cite{BG21}.
It will be useful in proving our main result.

\begin{thm}\label{BGRRT}
Assume that $v(2) =0$.
Let $F=E(X)[\sqrt{aX^2+b}]$ with $a,b\in \mg{E}$.
Let $w$ be a residually transcendental extension of $v$ to $F$.
Then $Fw/Ev$ is non-ruled if and only if the following conditions hold:
\begin{enumerate}[$(i)$]
\item $v(a),v(b) \in 2vE$, and for $a' \in a\sq E\cap \mg{\mc O}_v$ and $b' \in  b\sq E\cap \mg{\mc O}_v$, the conic $\mc C_{\ovl{a'}, \ovl{b'}}$ over $Ev$ has no rational point.

\item $w|_{E(X)}$ is the Gauss extension of $v$ to $E(X)$ with respect to $cX$ for $c\in \mg E$  with $v(c^2ba^{-1}) = 0$.  

\end{enumerate}
Moreover, if these conditions hold, then $w$ is the unique extension of $w|_{E(X)}$ to $F$,  the extension $w$ of $v$ is unramified and $Fw/Ev$ is the function field of the conic $\mc C_{\ovl{a'}, \ovl{b'}}$ over $Ev$.
\end{thm}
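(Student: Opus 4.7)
The plan is to prove both directions of the equivalence separately. For the sufficiency of $(i)$ and $(ii)$, I would first normalize $a$ and $b$ via $(i)$: writing $a = a_0 s^2$ and $b = b_0 t^2$ with $a_0, b_0 \in \mg{\mc O_v}$ and $s, t \in \mg E$, and passing to $Y = (s/t)X$, we rewrite $F = E(Y)[\sqrt{a_0 Y^2 + b_0}]$. Condition $(ii)$ then identifies $w|_{E(Y)}$ with the Gauss extension of $v$ with respect to $Y$ (any unit rescaling being harmless), so $wE(Y) = vE$ and $E(Y)w = Ev(\ovl Y)$ with $\ovl Y$ transcendental over $Ev$. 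Since $\ovl{a_0 Y^2 + b_0} = \ovl{a_0}\ovl Y^2 + \ovl{b_0}$ is not a square in $Ev(\ovl Y)$ by $(i)$ together with \Cref{conicsplit}, an application of \Cref{BG2.2} identifies $Fw = Ev(\ovl Y)\bigl[\sqrt{\ovl{a_0}\ovl Y^2 + \ovl{b_0}}\,\bigr]$ as the function field of $\mc{C}_{\ovl{a_0}, \ovl{b_0}}$ over $Ev$, which is non-ruled by \Cref{conicsplit}. Uniqueness of $w$ above $w|_{E(X)}$ and unramifiedness both follow from equality in \Cref{funda_quad}.

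For the necessity, assume $Fw/Ev$ is non-ruled. Since $E(X)w/Ev$ is ruled by \Cref{RRT}, we have $Fw \neq E(X)w$, so \Cref{funda_quad} yields $[Fw : E(X)w] = 2$, $wF = wE(X)$, uniqueness of $w$ above $w|_{E(X)}$, and unramifiedness. Applying \Cref{equal_values_ruled} to the decomposition $aX^2 + b = f+g$ then rules out $w(aX^2) \neq w(b)$: otherwise the term of smaller $w$-value yields a rational quadratic cover of $E(X)$ (namely $E(\sqrt a)(X)$ or $E(\sqrt b)(X)$) whose residue is ruled by \Cref{RRT}, forcing $Fw/Ev$ ruled --- a contradiction. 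Hence $w(aX^2) = w(b)$, so $2w(X) = v(b/a) \in vE$.

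The crucial remaining step --- and the main obstacle --- is to upgrade $2w(X) \in vE$ to $w(X) \in vE$. Suppose for contradiction $w(X) \notin vE$. I would treat the sub-cases $v(b) \in 2vE$ and $v(b) \notin 2vE$ separately, the latter via the symmetric presentation $F = E(u)[\sqrt{au^2 - ab}]$ with $u = \sqrt{aX^2+b}$. In the sub-case $v(b) \in 2vE$, choose $\delta \in \mg E$ with $v(\delta^2) = v(b)$. Since $w|_{E(X)}$ is residually transcendental and cannot be a Gauss extension (the latter would force $w(X) \in vE$), an analysis of $w|_{E(X^2)}$ via \Cref{RRT} identifies $E(X)w = \ell(\ovl Z)$, with $\ell$ the relative algebraic closure of $Ev$ in $E(X)w$ and $\ovl Z$ transcendental over $Ev$ --- rationally related to $\ovl{X^2/\gamma}$ for a suitable $\gamma \in \mg E$ with $v(\gamma) = w(X^2)$. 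Direct computation then exhibits $\ovl{(aX^2+b)/\delta^2}$ as a degree-one polynomial in $\ovl Z$ over $Ev$, which can never be a square in $E(X)w$, so \Cref{BG2.2} realizes $Fw$ as a rational function field over $Ev$, hence ruled --- contradicting the hypothesis. Once $w(X) \in vE$ is in hand, pick $c \in \mg E$ with $v(c) = -w(X)$; then $w(cX) = 0$ and $v(c^2 b/a) = 0$, and $\ovl{cX}$ must be transcendental over $Ev$ (otherwise $E(X)w$ would be algebraic over $Ev$, violating residual transcendence), so \Cref{gaussextdef} identifies $w|_{E(X)}$ as the Gauss extension with respect to $cX$, establishing $(ii)$. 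Finally, running the sufficiency argument backward forces $\ovl{a_0}\ovl Y^2 + \ovl{b_0}$ to be a non-square in $Ev(\ovl Y)$, so $\mc{C}_{\ovl{a_0}, \ovl{b_0}}$ has no $Ev$-rational point by \Cref{conicsplit}, completing $(i)$.
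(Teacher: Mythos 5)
The sufficiency direction of your argument is correct, as is the first reduction in the necessity direction (using \Cref{RRT}, \Cref{funda_quad} and \Cref{equal_values_ruled} to get $w(aX^2)=w(b)$, hence $2w(X)\in vE$). Note, however, that the paper does not prove this theorem from scratch: it is explicitly a recollection of the main result of \cite{BG21}, and the proof consists of quoting \cite[Theorem~3.5]{BG21}, \cite[Proposition~3.4]{BG21} and \cite[Corollary~3.6]{BG21}. What you call ``the crucial remaining step'' is precisely the content of those cited results, i.e.\ the core of the conics paper, and your sketch of it has a genuine gap.

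Concretely, the unproved assertion is that $E(X)w=\ell(\ovl Z)$ with $\ovl Z$ ``rationally related to $\ovl{X^2\gamma^{-1}}$'' for $\gamma\in\mg E$ with $v(\gamma)=w(X^2)$. There is no reason for $\ovl{X^2\gamma^{-1}}$ to be transcendental over $Ev$, nor for the transcendental generator of $E(X)w$ to be a rational function of it over $\ell$. For instance, take $E=k(\!(t)\!)$ with the $t$-adic valuation, let $w_0$ be the Gauss extension of the unique extension of $v$ to $k(\!(t^{1/2})\!)(X)$ with respect to $T=t^{-1}(X-t^{1/2})$, and let $w=w_0|_{E(X)}$. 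Then $w(X)=\tfrac12\notin vE$ and $2w(X)\in vE$, but with $\gamma=t$ one finds $\ovl{t^{-1}X^2}=1$, which is algebraic over $Ev$; the residual transcendence of $w$ is only visible after ``zooming in'', e.g.\ via $\ovl{(t^{-1}X^2-1)^2t^{-1}}=4\ovl T^2$. So your ``direct computation'' exhibiting $\ovl{(aX^2+b)\delta^{-2}}$ as a degree-one polynomial in $\ovl Z$ only covers the case where $\ovl{X^2\gamma^{-1}}$ is transcendental (where it does work and yields ruledness); in the degenerate case the value $w(aX^2+b)$ can exceed $w(b)$ by cancellation and the whole analysis of which residues are squares has to be redone. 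Handling exactly this degeneration is what \cite[Theorem~3.5 and Proposition~3.4]{BG21} accomplish, so as written your proposal does not close the necessity direction.
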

\begin{proof}
Suppose that $Fw/Ev$ is non-ruled.
Then by \cite[Theorem 3.5]{BG21}, we have that $w|_{E(X^2)}$ is the Gauss extension of $v$ to $E(X^2)$ with respect to $b^{-1}aX^2$, whereby $\ovl{b^{-1}aX^2}$ is transcendental over $Ev$.
By \cite[Proposition 3.4]{BG21}, this implies $v(a)$, $v(b)\in 2vE$. 
Let $c\in \mg E$ be such that $v(c^2ba^{-1}) =0$. 
Since $ (\ovl{cX})^2 = (\ovl{c^2ba^{-1}})(\ovl{b^{-1}aX^2}) $ is transcendental over $Ev$, we get that $\ovl{cX}$ is  transcendental over $Ev$. This shows $(ii)$. 
By \cite[Proposition 3.4]{BG21} and \Cref{conicsplit}, we get that for $a' \in a\sq E\cap \mg{\mc O}_v$ and $b' \in  b\sq E\cap \mg{\mc O}_v$, the conic $\mc C_{\ovl{a'}, \ovl{b'}}$ over $Ev$ has no rational point.

For the converse assume that $(i)$ and $(ii)$ hold. From $(ii)$, it follows that $w|_{E(X^2)}$ is the Gauss extension of $v$ to $E(X^2)$ with respect to $b^{-1}aX^2$. 
From $(i)$, using \cite[Proposition 3.4]{BG21} and \Cref{conicsplit}, we get that $Fw/Ev$ is non-ruled.

The uniqueness of $w$ follows by \cite[Corollary 3.6]{BG21}.
\end{proof}

\section{Ruled valuations on elliptic function fields}

Let $E$ be a field of characteristic different from $2$ and $3$. Let $a,b \in E$. To these elements, we associate the plane affine curve  $$\mc{E}_{a,b}:\,Y^2\,=\,X^3+aX+b$$
and the quantity $$\Delta_{a,b}\,=\,4a^3+27b^2\,\in \, E\,,$$ which is the discriminant of the cubic polynomial $X^3+aX+b$ and also referred to as the \emph{discriminant of $\mc{E}_{a,b}$}.
Note that $X^3+aX+b$ is separable if and only if $\Delta_{a,b}\neq 0$, and in this case $\mc{E}_{a,b}$ is a smooth elliptic curve.

For the rest of the section,  we fix $a,b \in E$ with $\Delta_{a,b}\neq 0$. 
Let $F$ be the function field of $\mc{E}_{a,b}$ over $E$, that is,
 $$F \,=\, E(X)[\sqrt{X^3+aX+b}]\,.$$
We further fix the element  $$Z\,= \, (aX+b)^{-1}X^3
 \,\in\, E(X)\,.$$

Let $v$ be a valuation on $E$ and let $w$ be a residually transcendental extension of $v$ to $F$. 
By \Cref{equal_values},  $Fw/Ev$ is ruled for any residually transcendental extension $w$ of $v$ to $F$ with $w(Z)\neq 0$. In the following, we consider residually transcendental  extensions $w$ of $v$ to $F$ such that $w(Z) =0$. We will obtain sufficient conditions on $a,b$ and $\Delta_{a,b}$ in this case for $Fw/Ev$ to be ruled.

\begin{lem}\label{a3=b2} 
Let $w$ be a residually transcendental extension of $v$ to $F$ such that $w(Z)=0$. 
Then
$v(a^3) = v(b^2)$ if and only if $ w(X^3) = w(aX) = w(b)$. 
In particular, if $a,b\in \mg{\mc O}_v$, then $w(X) =0$.
\end{lem}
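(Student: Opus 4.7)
The plan is to translate the condition $w(Z)=0$ as $3w(X)=w(aX+b)$ (since $Z=X^3/(aX+b)$) and then to compare the $w$-values of the summands $aX$ and $b$ using the ultrametric inequality, working throughout in the divisible closure $vE\otimes_\zz\qq$ into which $wF$ embeds.

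For the implication from $v(a^3)=v(b^2)$, that is $3v(a)=2v(b)$, to $w(X^3)=w(aX)=w(b)$, I would rule out the possibility $w(aX)\neq w(b)$ by contradiction. In either sub-case one has $w(aX+b)=\min\{w(aX),w(b)\}$, and substituting into $3w(X)=w(aX+b)$ together with the hypothesis $3v(a)=2v(b)$ forces $w(X)=v(a)/2$ (for instance, if $w(aX)<w(b)$ then $3w(X)=v(a)+w(X)$, while if $w(aX)>w(b)$ then $3w(X)=v(b)=3v(a)/2$). In both sub-cases this yields $w(aX)=3v(a)/2=v(b)=w(b)$, contradicting the assumed strict inequality. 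Hence $w(aX)=w(b)$, which gives $w(X)=v(b)-v(a)=v(a)/2$, and consequently $w(X^3)=3v(a)/2=v(b)=w(b)=w(aX)$, as required.

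The converse is a direct computation: the equality $w(X^3)=w(aX)$ gives $2w(X)=v(a)$, while $w(X^3)=w(b)$ gives $3w(X)=v(b)$; eliminating $w(X)$ yields $3v(a)=2v(b)$, that is, $v(a^3)=v(b^2)$. The ``in particular'' clause then follows by specialization: if $a,b\in\mg{\mc O}_v$ then $v(a)=v(b)=0$, so $v(a^3)=v(b^2)$, whence the equivalence just proved gives $w(X^3)=0$ and therefore $w(X)=0$.

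There is no real obstacle; the whole argument is careful bookkeeping of $w$-values via the ultrametric inequality, and the residually transcendental hypothesis on $w$ plays no active role (it merely fixes the setting of the section). The only point to flag is that $w(X)$ a priori lies in $vE\otimes_\zz\qq$ rather than in $vE$, so the identity $w(X)=v(a)/2$ must be read in that divisible hull.
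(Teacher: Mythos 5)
Your proof is correct and takes essentially the same approach as the paper's: a case analysis on the relative $w$-values of $aX$ and $b$, using $w(Z)=0$ to pin down $w(X)$ via the ultrametric inequality. The only difference is organizational — you prove the forward implication by contradiction and the converse directly, while the paper runs a single exhaustive case check covering all four sign configurations — but the underlying bookkeeping is the same, and your observation that residual transcendence plays no active role here is also accurate.
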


\begin{proof}
Since $w(Z)=0$, we have $w(X^3)=w(aX+b)\geq \min\{w(aX),w(b)\}$.

If $ w(X^3) = w(aX) = w(b)$, then $v(b^2) =w(X^6) =3w(X^2) = v(a^3)$.

If $w(aX) <w(b)$, then $w(X^3) =w(aX+b)=w(aX) <w(b)$ and hence $v(a^3)=w(X^6) < v(b^2)$.

If  $w(aX) >w(b)$,  then $w(X^3) =w(aX+b)=w(b) < w(aX) $ and hence $v(a)>w(X^2)$ and $v(a^3)>w(X^6)=v(b^2)$.

If $w(X^3)>w(aX) = w(b)$, then $w((a^{-1}b)^3) =w(X^3) > w(b)$ and hence $v(a^3) < v(b^2)$.

If $v(a)=v(b)=0$, then $v(a^3)=0=v(b^2)$ and hence $w(X^3)=w(b)=0$.
\end{proof}

\begin{lem}
\label{lemma_with_c}
Assume that $v(2)=0$. Let $w$ be a residually transcendental extension of $v$ to $F$ such that $w(Z)=0$. 
Assume that $wF =wE(X)$ and $\ovl{Z}$ is transcendental over $Ev$.
Then $w(X^3) \in 2vE$.
Let $c\in E$ be such that $w(cX^3) =0$.
Then $Fw/Ev$ is ruled if one of the following holds$\colon$
\begin{enumerate}[$(i)$]
\item $\ovl{cX^3}$ is algebraic over $Ev$.
\item $\ovl{c(aX+b)}$ is algebraic over $Ev$ and   $w(X)\notin 2vE$.
\end{enumerate}
 \end{lem}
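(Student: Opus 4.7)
For the first assertion $w(X^3)\in 2vE$, I would use the factorization $X^3+aX+b=(aX+b)(Z+1)$. Since $w(Z)=0$ and $\bar Z$ is transcendental over $Ev$, $\bar Z\neq -1$ and hence $w(Z+1)=0$; combined with $w(aX+b)=w(X^3)$ (which follows from $w(Z)=0$), this gives $w(X^3+aX+b)=w(X^3)$. From $Y^2=X^3+aX+b$ and the hypothesis $wF=wE(X)$ we get $w(X^3)=2w(Y)\in 2wE(X)$. Since $[E(X):E(Z)]=3$ and $w|_{E(Z)}$ is the Gauss extension (so $wE(Z)=vE$), the Fundamental Inequality gives $[wE(X):vE]\in\{1,3\}$. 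The index-$1$ case is immediate; in the index-$3$ case, the cyclic quotient $wE(X)/vE$ of order $3$ has $2$ as a unit, so $2wE(X)\cap vE=2vE$, giving $w(X^3)\in 2vE$.

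For ruledness in cases (i) and (ii), the key preparatory reduction is that $w(X^3)\in 2vE$ allows me to assume $c\in \sq{E}$ (take $c=d^2$ with $2v(d)=-w(X^3)$); different choices of $c$ with $w(cX^3)=0$ differ by units in $\mg{\mc O}_v$ and do not change the algebraicity hypotheses. With $c=d^2$, the identity $c(X^3+aX+b)=u(Z+1)$, where $u=c(aX+b)$, writes $F=E(X)[\sqrt{u(Z+1)}]$, both factors having $w$-value $0$, and the simplification $\bar u(\bar Z+1)=\bar u+\bar u\bar Z=\bar u+\overline{cX^3}$ (using $\bar u\bar Z=\overline{cX^3}$) gives the clean form to which I apply \Cref{BG2.2}.

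In case (i), set $\xi:=\overline{cX^3}$, algebraic over $Ev$ by hypothesis. Since $\bar Z$ is transcendental and $\bar u\neq 0$, $\bar u$ must be transcendental (otherwise $\bar Z=\xi/\bar u$ would be algebraic), and $a\neq 0$, since $a=0$ forces $u=cb$ to be constant with $\bar u$ algebraic. Hence $E(u)=E(X)$, and by the uniqueness in \Cref{gaussextdef}, $w|_{E(X)}$ is the Gauss extension at $u$, giving $E(X)w=Ev(\bar u)$ whose relative algebraic closure of $Ev$ is $Ev$ itself; in particular $\xi\in Ev$. By \Cref{BG2.2}, $Fw=E(X)w[\sqrt{\bar u+\xi}]$, since $\bar u+\xi$, being a degree-one polynomial in $\bar u$, is not a square in $Ev(\bar u)$. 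Solving $\bar u=(\sqrt{\bar u+\xi})^2-\xi$ yields $Fw=Ev(\sqrt{\bar u+\xi})$, a rational (hence ruled) function field over $Ev$.

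In case (ii), set $\xi:=\bar u$, which is algebraic; then $\overline{cX^3}=\bar u\bar Z$ is transcendental. The key first step is that $w(X)\notin vE$: otherwise $w(X^3)=3w(X)$ would lie in $3vE\cap 2vE=6vE$ (using $\gcd(2,3)=1$ in the torsion-free $vE$), forcing $w(X)\in 2vE$, against the hypothesis. Thus $[wE(X):vE]=3$, and since $w|_{E(X^3)}$ is the Gauss extension at $cX^3$, the Fundamental Inequality applied to $E(X)/E(X^3)$ forces $[E(X)w:E(X^3)w]=1$, so $E(X)w=Ev(\overline{cX^3})$ and $\xi\in Ev$. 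The same identity $\overline{u(Z+1)}=\overline{cX^3}+\xi$ and the same odd-degree argument give $Fw=Ev(\sqrt{\overline{cX^3}+\xi})$, rational. The most delicate step, and what I expect to be the main obstacle, is the coprimality-based descent $2wE(X)\cap vE=2vE$ in the index-$3$ case, used both in the first assertion and (to rule out $w(X)\in vE$) in case (ii).
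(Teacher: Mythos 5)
Your treatment of cases $(i)$ and $(ii)$ is essentially the paper's argument: in each case one identifies which of the two residues $\ovl{c(aX+b)}$, $\ovl{cX^3}$ is transcendental, shows that $E(X)w$ is the rational function field over $Ev$ generated by that residue (via \Cref{gaussextdef} in case $(i)$; via ramification of index $3$ over $E(X^3)$ together with \Cref{funda_ineq} in case $(ii)$), concludes that the algebraic residue lies in $Ev$ because $Ev$ is relatively algebraically closed in $E(X)w$, and finishes with \Cref{BG2.2} and the simplification $Ev(\vartheta)[\sqrt{\vartheta+\xi}]=Ev(\sqrt{\vartheta+\xi})$. Your explicit treatment of $a=0$ in case $(i)$ is a point the paper leaves implicit, and your reduction to $c\in\sq{E}$ matches the paper's choice of $d$ with $w(d^2)=w(c)$.

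The gap is in the first assertion. The Fundamental Inequality applied to $E(X)/E(Z)$ gives only $[wE(X):vE]\leq 3$, not $[wE(X):vE]\in\{1,3\}$, and the index-$2$ case is exactly where your descent breaks down: there $2wE(X)\subseteq vE$, but $2wE(X)\cap vE=2wE(X)$ need not equal $2vE$ (take $vE=2\zz\subseteq \zz=wE(X)$), so $w(X^3)\in 2wE(X)$ does not yield $w(X^3)\in 2vE$. Note that in the index-$2$ case one gets $2w(X)\in vE$ and $3w(X)=w(X^3)\in 2wE(X)\subseteq vE$, hence $w(X)\in vE$; so excluding index $2$ amounts to proving that $w(X)\in vE$ forces $wE(X)=vE$. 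This is precisely the preliminary claim with which the paper opens its proof, and it genuinely requires the transcendence of $\ovl{Z}$, not just value-group bookkeeping: choosing $d\in\mg{E}$ with $w(dX)=0$, the relation $(\ovl{dX})^3=\ovl{d^3(aX+b)}\,\ovl{Z}$ shows that at least one of $\ovl{dX}$, $\ovl{d^3(aX+b)}$ is transcendental over $Ev$, whence $w|_{E(X)}$ is a Gauss extension and $wE(X)=vE$ by \Cref{gaussextdef}. So the ``main obstacle'' you flag is not the index-$3$ descent (which is correct as you state it) but the unexcluded index-$2$ possibility; once the above claim is supplied, your proof closes up and coincides with the paper's.
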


 \begin{proof}
 We first claim that if $w(X)\in vE$ then $wE(X) = vE$.
 Since $w(Z) =0 $, we have $w(X^3) = w(aX+b)$.
Let $d\in \mg{E}$ be such that  $w(dX)=0$. 
Then $w(d^{3}(aX+b))=w(d^3X^3)=0$.
We get that  $(\ovl{dX})^3 = \ovl{(d^3(aX+b))} \: \ovl{Z} $, and since $\ovl{Z}$ is transcendental over $Ev$, we get that at least one of the residues $\ovl{dX}$ and $\ovl{d^3(aX+b)}$ is transcendental over $Ev$. 
Since $E(X) =E(dX) = E(d^3(aX+b))$, we conclude by \Cref{gaussextdef}
that $wE(X) =vE$.

Since $\ovl{Z}$ is transcendental, we have $w(Z+1) =0$, and hence 
$$w(Y^2) =w( X^3 + aX+b) = w((aX+b)(Z+1)) = w(aX+b) =w(X^3).$$
Thus $w(X^3) \in 2wF = 2wE(X)$. Furthermore $wE(Z) = vE$.

If $wE(X) = vE$ , then $w(X^3) \in 2wE(X) =2vE$.
Thus we now assume  that $[wE(X): vE]>1$.
In particular, by the above claim,  $w(X)\notin vE$. 
Thus  $w(X) \notin 2vE$, whereby $\frac{1}{2}w(X) \notin vE$.
This shows that $[wE(X): vE]\geq 3$. 
On the other hand, since  $[E(X):E(Z)] = 3$,  using \Cref{funda_ineq}, we get that $[wE(X): vE] \leq 3$. This together shows that $[wE(X): vE] = 3$.
Then $6wE(X) \subseteq 2vE$. 
Since $w(X)\in 2wE(X)$, we conclude that $3w(X) \in 2vE$.

Let $c\in \mg E$ such that $w(c(aX+b)) = w(cX^3) = 0$. Since $3w(X) \in 2vE$, we can choose $d\in \mg E$ such that $w(d^2)=w(c)$. Set $U =d^2(aX+b)$ and  $V = d^2X^3$.
Then $w(U) = w(V) =0$ and $V = UZ$.
Furthermore 
$$d^2Y^2 = d^2X^3 + d^2(aX+b) =V+U.$$

$(i)$ Assume that $\ovl{cX^3}$ is algebraic over $Ev$, then so is $\ovl{V}$.
Since $\ovl {Z}$ is transcendental over $Ev$, we get that $\ovl{U} = \ovl{Z^{-1}}\,\ovl{V}$ is also  transcendental over $Ev$.
Since $E(X) =E(U)$, by \Cref{gaussextdef}, we have $E(X)w =Ev(\ovl{U})$. 
Using \Cref{BG2.2}, we get that 
$Fw=E(X)w(\ovl{dY}) = Ev(\ovl{U})(\sqrt{\ovl{U}+ \ovl{V}})$, whereby $Fw/Ev$ is ruled.

$(ii)$ Assume that $c(aX+b)$ is algebraic over $Ev$, then so is $\ovl{U}$. Also, assume that  $w(X)\notin 2vE$. 
Since $\ovl {Z}$ is transcendental over $Ev$, we get that $\ovl{V} = \ovl{U}\;\ovl{Z}$ is  transcendental over $Ev$. 
It follows by \Cref{gaussextdef} that $wE(V)=vE$ and $E(V)w=Ev(\ovl{V})$. 
Since $w(X)\in 2wE(X)\setminus 2vE$, arguing as before we get that   $3\leq [wE(X):vE]=[wE(X):wE(V)]$. 
Thus $E(X)w=Ev(\ovl{V})$ by \Cref{funda_ineq}. 
Therefore $Ev$ is algebraically closed in $E(X)w$ and hence $\ovl{U} \in Ev$.
Using \Cref{BG2.2}, we get that 
$Fw =E(X)w[\ovl{dY}] = Ev(\ovl{V})[\sqrt{\ovl{V} + \ovl{U}}]$,
whereby $Fw/Ev$ is ruled.
\end{proof}

\begin{lem}\label{Zalgebraic} 
 Assume that $v(6)=0$. Let $w$ be a residually transcendental extension of $v$ to $F$ such that $w(Z) =0$. Set $\ovl{Z} =Z+\mf m_w$ in $Fw$. Then $Fw/Ev$ is ruled in each of the following cases:
\begin{enumerate}[$(i)$]
\item $\ovl{Z}$ is algebraic over $Ev$ and $\ovl{Z} \neq -1$.
\item $v(\Delta_{a,b}) >  \min \{v(a^3), v(b^2)\}$ and $\ovl{Z}$  is transcendental over $Ev$.
\item $v(\Delta_{a,b}) =  \min \{v(a^3), v(b^2)\}$ and $\ovl{Z} =-1$.
\item  $v(\Delta_{a,b}) =\min \{v(a^3), v(b^2)\}$ and $w(X) \notin 2vE$.
\end{enumerate}
\end{lem}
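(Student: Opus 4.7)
The plan is to handle the four cases of the lemma separately, exploiting the factorization $X^3 + aX + b = (aX+b)(1+Z)$ in $E(X)$, which gives $Y^2 = (aX+b)(1+Z)$ in $F$, and in cases (i) and (ii) comparing $F$ with the rational function field $F' = E(X)[\sqrt{aX+b}] \simeq E[\sqrt{aX+b}]$ over $E$.

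For case (i), since $\ovl{Z} \neq -1$ gives $w(1+Z)=0$ with $\ovl{1+Z}$ algebraic and nonzero over $Ev$, I would first dispose of the trivial subcase $Fw = E(X)w|_{E(X)}$ via \Cref{RRT}. Otherwise by \Cref{funda_quad} we have $[Fw: E(X)w|_{E(X)}]=2$ and $wF=wE(X)$. Since $w(aX+b)=2w(Y)\in 2wE(X)$, we can choose $\phi\in E(X)^\times$ with $w((aX+b)\phi^2)=0$; then \Cref{BG2.2} gives $Fw = E(X)w|_{E(X)}\bigl[\sqrt{\ovl{(aX+b)\phi^2}\cdot\ovl{1+Z}}\bigr]$. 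The residue of the rational function field $F'$ at an extension of $w|_{E(X)}$ is ruled over $Ev$ by \Cref{RRT}, and the generator of $Fw$ differs from that of this residue only by the algebraic factor $\sqrt{\ovl{1+Z}}$. A case distinction depending on whether $\ovl{1+Z}$ is a square in the relative algebraic closure of $Ev$ in $E(X)w|_{E(X)}$ then identifies $Fw$ either with the residue of $F'$, or with an algebraic extension of it, in either case ruled over $Ev$.

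For case (ii), $v(\Delta_{a,b}) > \min\{v(a^3),v(b^2)\}$ forces $v(a^3)=v(b^2)$. After the substitution $(X,Y)\mapsto(u^2 X, u^3 Y)$ for suitable $u$ (possibly after a finite extension of $E$ to guarantee the required value for $v(u)$), we may assume $a,b\in\mg{\mc O}_v$ with $\ovl{\Delta_{a,b}}=0$. Then \Cref{a3=b2} gives $w(X)=0$, and since $\ovl{Z}$ is transcendental $\ovl{X}$ must be transcendental over $Ev$, so $w|_{E(X)}$ is the Gauss extension with respect to $X$ by \Cref{gaussextdef} and $E(X)w|_{E(X)} = Ev(\ovl{X})$. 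The reduced cubic $T^3+\ovl{a}T+\ovl{b}\in Ev[T]$ has discriminant $0$ and hence admits a double root $r = -3\ovl{b}/(2\ovl{a})\in Ev$, factoring as $(T-r)^2(T+2r)$. Therefore $\sqrt{\ovl{X^3+aX+b}} = (\ovl{X}-r)\sqrt{\ovl{X}+2r}$ in $Fw$, and $Fw = Ev(\ovl{X},\sqrt{\ovl{X}+2r}) = Ev(\sqrt{\ovl{X}+2r})$ is rational over $Ev$.

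For cases (iii) and (iv), both share $v(\Delta_{a,b})=\min\{v(a^3),v(b^2)\}$. In case (iii), $\ovl{Z}=-1$ means $w(1+Z)>0$; after a scaling analogous to case (ii), this forces the normalized $\ovl{X}$ to be a root of $T^3+\ovl{a}T+\ovl{b}$, and the hypothesis $v(\Delta_{a,b})=\min$ ensures this is a simple root in the good reduction subcase, so \Cref{ruled_applicable} applied to the corresponding factorization gives $Fw/Ev$ ruled; the non good reduction subcases are handled by a similar rescaling. In case (iv), if $\ovl{Z}$ is algebraic we fall into case (i) or (iii); otherwise $\ovl{Z}$ is transcendental and we apply \Cref{lemma_with_c}(ii). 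Its hypothesis $w(X)\notin 2vE$ is precisely the assumption of case (iv), and the remaining hypothesis, that $\ovl{c(aX+b)}$ is algebraic over $Ev$ for $c\in E$ with $w(cX^3)=0$, follows from $v(\Delta_{a,b})=\min\{v(a^3),v(b^2)\}$ by a case analysis on the comparison of $v(a^3)$ with $v(b^2)$. The main obstacle is case (i): descending the ruledness from a ruled extension obtained by adjoining $\sqrt{\ovl{1+Z}}$ back to $Fw$ requires a careful square-class analysis, and the verification in case (iv) that one of \Cref{lemma_with_c}'s algebraicity hypotheses holds also involves a nontrivial case split.
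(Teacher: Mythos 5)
Your overall architecture (factor $Y^2=(aX+b)(Z+1)$, compare with the rational field $E(X)[\sqrt{aX+b}]$ in cases $(i)$--$(ii)$, use \Cref{ruled_applicable} in $(iii)$ and \Cref{lemma_with_c}$(ii)$ in $(iv)$) parallels the paper's, but cases $(ii)$ and $(iii)$ contain genuine gaps. The substitution $(X,Y)\mapsto(u^2X,u^3Y)$ replaces $(a,b)$ by $(u^{-4}a,u^{-6}b)$, so normalizing $a,b$ to units requires $v(a)\in 4vE$ and $v(b)\in 6vE$; the hypothesis of $(ii)$ only gives $3v(a)=2v(b)$, which does not imply this (take $vE=\zz$, $v(a)=2$, $v(b)=3$). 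Your fallback ``after a finite extension of $E$'' is not justified: such an extension $E'/E$ is ramified, ruledness of $FE'w'/Ev$ does not descend to the subfield $Fw$, and the one tool available for such reductions, \Cref{scalarext-noresidueext_for_p}, has hypotheses ($u\in bF^{\times p}\cap\mg{\mc O}_w$ with $\ovl u\in Fw^{\times p}$) that you never verify. The paper shows no normalization is needed in $(ii)$: working with $\theta=b^{-1}aX$, the extension $w|_{E(X)}$ is the Gauss extension with respect to $\theta$ and $\ovl{a^3b^{-2}}=-\frac{27}{4}$, so the residue cubic factors as $(T-3)(T+\frac32)^2$ with no rescaling. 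The same defect sinks case $(iii)$: ``handled by a similar rescaling'' conceals the actual content, namely three subcases according to the comparison of $w(aX)$ with $w(b)$, in two of which one must first adjoin a square or cube root of a constant via \Cref{scalarext-noresidueext_for_p} (verifying its hypotheses) and exhibit an explicit $\theta$ with $E(X)=E(Y^2)[\theta]$ and $\Pol_\theta\in\mc O_w[T]$, while in the third the hypothesis $v(\Delta_{a,b})=\min\{v(a^3),v(b^2)\}$ enters precisely to ensure $\ovl{a^{-3}b^2}\neq-\frac{4}{27}$, so that the residue cubic is separable and $\ovl\theta$ is a simple root.

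In case $(i)$ you flag the obstacle yourself but do not overcome it: knowing that $E(X)w$ is ruled is not enough, one needs $E(X)w=\ell(\vartheta)$ for the \emph{specific} generator $\vartheta=\ovl{(aX+b)\phi^2}$, which is the content of \Cref{a_version_of_RRT_2.8} and of \Cref{u_algebraic_implies_ruled}. Since $F=E(X')[\sqrt{(Z+1)X'}]$ for $X'=aX+b$, with $w(Z+1)=0$ and $\ovl{Z+1}$ algebraic, \Cref{u_algebraic_implies_ruled} applies verbatim and closes this case in one line. Case $(iv)$ is structurally correct, but the algebraicity of $\ovl{c(aX+b)}$ does not follow from comparing $v(a^3)$ with $v(b^2)$; it follows from $w(X)\notin 2vE$: if $\ovl{c(aX+b)}$ were transcendental, \Cref{gaussextdef} would give $wE(X)=vE$, hence $2w(X)\in 2vE$, which together with $3w(X)=w(X^3)\in 2vE$ from \Cref{lemma_with_c} forces $w(X)\in 2vE$, a contradiction.
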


\begin{proof}
Set $Y = \sqrt{X^3+aX+b}$.
 By \Cref{RRT_corollary}, if $F/E$ is ruled, then so is $Fw/Ev$. Hence we may assume that 
$F/E$ is not ruled. 
In particular we have $E(X)\neq F\neq E(Y)$.
Hence $[F:E(X)]=2$. 
If $wE(X)\neq wF$ then $E(X)w=Fw$ by \Cref{funda_quad}, and hence $Fw/Ev$ is ruled. Thus in order to show that $Fw/Ev$ is ruled, we may further assume the following:
\begin{equation} 
wE(X) = wF \mbox{ and } w(Y^2) \in 2wE(X).  \label{equalvaluegroup}
\end{equation}

$(i)$ Suppose that $\ovl{Z}$ is algebraic over $Ev$ and $\ovl{Z} \neq -1$.
 Then $w (Z+1) =0$ and  $\ovl{Z+1}$ is algebraic over $Ev$. 
Note that  $Y^2= (aX+b)\left( Z +1 \right)$.
It follows by \Cref{u_algebraic_implies_ruled} that $Fw/Ev$ is ruled.
 
$(ii)$ Suppose that $v(\Delta_{a,b}) >  \min \{v(a^3), v(b^2)\}$ and $\ovl{Z}$  is transcendental over $Ev$.
Then $v(4a^3+27b^2)=v(\Delta_{a,b})  >  \min \{v(a^3), v(b^2)\}$, whereby $v(a^3) =v(b^2)$. 
By \Cref{a3=b2}, it follows that $w(X^3) =w(aX) =w(b)$.

Furthermore, we have $\ovl{a^3b^{-2}} = -\frac{27}{4}\neq 0$ in $Ev$ and $w(aX+b)=w(X^3)=w(b)$. 
Set $\theta = b^{-1}aX$.
Then $w(\theta +1)=0, w(\theta)\geq 0$ and $\ovl{\theta}\neq -1$.  
We observe that $Z =a^{-3}b^2  (\theta+1)^{-1}\theta^3$.   
Since $\ovl{Z}$ is transcendental over $Ev$, $\ovl{\theta}$ is also transcendental over $Ev$.
We have $E(X) =E(\theta)$ and  $w|_{E(X)}$ is the Gauss extension of $v$ to $E(X)$ with respect to $\theta$.
By \Cref{gaussextdef}, we get that  $ wE(X) = vE$ and $E(X)w = Ev(\ovl{\theta})$. 
Since
 $$Y^2 =a^{-3}b^3( \theta^3 +a^3 b^{-2}(\theta + 1)),$$ we obtain that $w(Y^2) = 3w(a^{-1}b)$ and by \eqref{equalvaluegroup} we get that $wF=wE(X)=vE$ and $w(a^{-1}b) \in 2vE$.
Hence  $a^{-1}b \sq E \cap \mg{\mc O}_v\neq \emptyset$.
Note that $\theta^3+a^3b^{-2}(\theta+1)\in\mc O_w$ and $\ovl{a^3b^{-2}}=-\frac{27}4$,
whereby $\ovl{\theta} ^3 + \ovl{a^3b^{-2}}(\ovl\theta +1) = ( \ovl{\theta} -3) (\ovl{\theta}+ \frac{3}{2})^2$.
We fix $u \in a^{-1}b\sq E \cap \mg{\mc O}_v$ and by \Cref{BG2.2} we obtain that
  $$Fw = Ev \left(\sqrt{\ovl{u} (\ovl\theta-3)}\right).$$
Hence $Fw/Ev$ is rational, and in particular, ruled.

$(iii)$ Suppose that $v(\Delta_{a,b}) =  \min \{v(a^3), v(b^2)\}$ and $\ovl{Z}=-1$.
Hence we have $w(Z+1)>0$. Since $Y^2=(aX+b)(Z+1)$, we obtain that $$w(Y^2)>w(aX+b)=w(X^3).$$
For any $\theta\in E(X)$ we denote by $\Pol_\theta$ the minimal polynomial of $\theta$ over the field $E(Y^2)=E(X^3+aX+b)$ expressed in the variable $T$.
In particular, we have 
$$\Pol_X = T^3+aT+(b-Y^2)\,.$$

We will distinguish three cases according to the comparison between the values $w(aX)$ and $w(b)$.
In view of \Cref{ruled_applicable}, it will be sufficient to find an element $\theta\in E(X)\cap\mg{\mc{O}}_w$  such that $E(X)=E(Y^2)[\theta]$, 
for which $\Pol_{\theta}\in\mc{O}_w[T]$, $\ovl{\theta}$ is a simple root of $\ovl{\Pol_{\theta}}$ and $\ovl{\theta}$ is algebraic over $Ev$.

Assume first that $w(aX)<w(b)$.  
Then $ w(aX)=w(aX+b)=w(X^3)$, whereby  $a^{-1}X^2\in\mg{\mc O}_w$, and  
  $$\overline{a^{-1}X^2}= \overline{(aX)^{-1}X^3}=\overline{(aX+b)^{-1}X^3} =\ovl{Z} = -1  \mbox{ in } Fw. $$
To show that $Fw/Ev$ is ruled, we may use \Cref{scalarext-noresidueext_for_p} and 
assume that there exists $c\in E$ with $c^2=-a^{-1}$. We set $\theta=cX$.
It follows that
$$\Pol_\theta =T^3-T+c^3(b-Y^2)\,.$$
Since we have $w(Y^2)>w(X^3)$ and $w(b)>w(aX)=w(X^3)$, we obtain that $w(Y^2-b)>w(X^3)=-w(c^3)$.
Hence $\Pol_\theta\in\mc{O}_w[T]$ and  $ \ovl{\Pol_\theta} = T^3-T$ in $E(Y^2)w[T]$. 
Thus $\ovl{\Pol_\theta}$ is separable over $Ev$ and $\ovl{\theta}$ is a simple root of $\ovl{\Pol_\theta}$. 
It follows by \Cref{ruled_applicable} that $Fw/Ev$ is ruled.

Assume now that $w(aX)>w(b)$. 
Then $w(b) =w(aX+b)=w(X^3)$. 
In order to show that $Fw/Ev$ is ruled, we may in view of \Cref{scalarext-noresidueext_for_p} assume that $b^{-1}=c^3$ for some $c\in E$.
We set $\theta =cX$. It follows that
 $$\Pol_\theta = T^3+ac^2T+(1-c^3Y^2)\in\mc{O}_w[T]\,.$$

Since $w(Y^2) > w(X^3) =-w(c^3)$ we have $w(c^3Y^2) >0$, 
and since further $w(ac^{-1}) = w(aX) > w(b) =w(X^3) = -w(c^3)$
we have  $w(ac^2)> 0$.
Hence $\Pol_\theta\in\mc{O}_w[T]$ and $\ovl{\Pol_\theta}=T^3+1$ in $E(Y^2)w[T]$.
 Thus $\ovl{\Pol_\theta}$ is separable over $Ev$ and $\ovl{\theta}$ is a simple root of $\ovl{\Pol_\theta}$.
It follows by \Cref{ruled_applicable} that $Fw/Ev$ is ruled.

Assume finally that $w(aX)  =w(b)$. Since $aX+b \neq  0$, we have $a,b \in \mg{E}$. 
Set $\theta= (aX)^{-1}{b}\in\mg{\mc{O}}_w$.
It follows that 
$$\Pol_\theta = T^3+(1-b^{-1}Y^2)^{-1}(T^2+a^{-3}b^2).$$
Since
$w(Y^2)>w(aX+b)\geq w(b)$,
we have $w(b^{-1}Y^2) >0$. 
Since further $w(a^{-3}b^3) =w(X^3) = w(aX+b) \geq w(b)$ we have $w(a^{-3}b^2) \geq 0$.
Hence $\Pol_\theta \in \mc O_w[T]$ and
$\ovl{\Pol_\theta} = T^3+T^2+\ovl{a^{-3}b^{2}}$ in $E(Y^2)w[T]$.

If $\ovl{a^{-3}b^2} =0$, then $\ovl{\Pol_\theta} = T^3+T^2$ and,    
    since $\ovl{\theta} \neq 0$ we have that $\ovl{\theta}$ is a simple root of  $\ovl{\Pol_\theta}$ and it follows by \Cref{ruled_applicable}  that $Fw/Ev$ is ruled.
Suppose that  $\ovl{a^{-3}b^2}\neq 0$. 
Since $v(\Delta_{a,b}) = \min \{v(a^3), v(b^2)\}$, we have that $\ovl{a^{-3}b^2} \neq - \frac{4}{27}$. 
Thus  $\ovl{\Pol_\theta} = T^3+ T^2+\ovl{(a^{-3}b^2)} $  is separable over $Ev$, whereby $\ovl{\theta}$ is a simple root of $\ovl{\Pol_\theta}$. 
 It follows by \Cref{ruled_applicable} that  $Fw/Ev$ is ruled.

$(iv)$ Assume that $v(\Delta_{a,b}) =\min \{v(a^3), v(b^2)\}$ and $w(X) \notin 2vE$. 
In order to show that $Fw/Ev$ is ruled, we may in view of $(i)$ and $(iii)$ assume that  $\ovl{Z}$ is transcendental over $Ev$. By \Cref{lemma_with_c}, we have $3w(X)=w(X^3)\in 2vE$.
Let $c\in \mg E$ be such that $w(X^3) = w(aX+b) = - v(c)$.
If $\ovl{c(aX+b)}$ is transcendental over $Ev$, then by \Cref{gaussextdef}, $wE(X)=vE$ and hence $2w(X)\in 2vE$.
Since $3w(X), 2w(X)\in vE$, we get that $w(X)\in 2vE$, which is a contradiction. Thus $\ovl{c(aX+b)}$ is algebraic over $Ev$. 
It follows from \Cref{lemma_with_c}$(ii)$ that $Fw/Ev$ is ruled.
\end{proof}

\begin{thm}\label{potentialgoodreduction}
Assume that $v(6)=0$, $v(\Delta_{a,b}) = \min \{v(a^3), v(b^2)\}$ and $v(\Delta_{a,b}) \notin 12vE$.
Let $w$ be a residually transcendental extension of $v$ to $F$.
Then $Fw/ Ev$ is ruled.
\end{thm}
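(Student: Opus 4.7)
The plan is to combine the previous reductions and then treat a single residual configuration with an explicit substitution. First, by \Cref{equal_values} applied with $n=3$ and $p=2$, $Fw/Ev$ is ruled whenever $w(X^3)\neq w(aX+b)$, equivalently when $w(Z)\neq 0$; so I may assume $w(Z)=0$. Next, \Cref{Zalgebraic} parts $(i)$, $(iii)$, $(iv)$ let me dispose of the three cases where $\ovl{Z}$ is algebraic over $Ev$ and $\ovl{Z}\neq -1$, where $\ovl{Z}=-1$, and where $w(X)\notin 2vE$ (the last two applications relying on the hypothesis $v(\Delta_{a,b})=\min\{v(a^3),v(b^2)\}$). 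This leaves the configuration in which $\ovl{Z}$ is transcendental over $Ev$ and $w(X)\in 2vE$. Finally, if $wF\neq wE(X)$, then \Cref{funda_quad} forces $Fw=E(X)w$, and \Cref{RRT} shows that $Fw/Ev$ is ruled; so I may also assume $wF=wE(X)$.

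With these reductions in place, $w(Z+1)=0$ (as $\ovl{Z}$ is transcendental), whence $w(Y^2)=w(aX+b)=3w(X)\in 6vE$. The crucial dichotomy is now whether $w(aX+b)=\min\{w(aX),w(b)\}$ or $w(aX+b)>\min\{w(aX),w(b)\}$. In the first case, the identity $3w(X)=\min\{v(a)+w(X),v(b)\}$ gives by direct inspection that $\min\{v(a^3),v(b^2)\}=6w(X)$; combined with $w(X)\in 2vE$, this forces $v(\Delta_{a,b})=6w(X)\in 12vE$, contradicting the hypothesis $v(\Delta_{a,b})\notin 12vE$. In the second case, the strict inequality forces $w(aX)=w(b)$ together with $\ovl{aX/b}=-1$.

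To treat this remaining sub-case I pick $c\in\mg E$ with $v(c)=-3w(X)$ (possible since $3w(X)\in 6vE\subseteq vE$) and substitute $\tau:=aX/b+1$, so that $X=(b/a)(\tau-1)$ and $w(\tau)>0$. A direct computation gives $cX^3=(cb^3/a^3)(\tau-1)^3$ with $cb^3/a^3\in\mg{\mc O}_v$ and $\ovl{\tau-1}=-1$, whence $\ovl{cX^3}=-\ovl{cb^3/a^3}\in Ev$ is algebraic over $Ev$. \Cref{lemma_with_c}$(i)$ then immediately yields that $Fw/Ev$ is ruled.

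The main obstacle is precisely this last configuration $\ovl{aX/b}=-1$, where the valuation data no longer produce a contradiction with the hypothesis $v(\Delta_{a,b})\notin 12vE$ and one must exhibit ruledness directly by an explicit change of variable. The point is that the substitution $\tau=aX/b+1$ turns $X^3$ into a unit times $(\tau-1)^3$, whose residue is a constant lying in $Ev$; this is precisely what \Cref{lemma_with_c}$(i)$ requires to conclude.
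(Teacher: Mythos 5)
Your proof is correct and takes essentially the same route as the paper's, with the same chain of reductions through \Cref{equal_values}, \Cref{Zalgebraic}$(i),(iii),(iv)$, and \Cref{lemma_with_c}$(i)$. In the final step the paper picks $c_0$ with $w(c_0^2X)=0$ and dichotomizes on $\ovl{c_0^2X}$ being transcendental (which, via the Gauss formula of \Cref{gaussextdef}, contradicts $v(\Delta_{a,b})\notin 12vE$) or algebraic (which gives $\ovl{c_0^6X^3}=(\ovl{c_0^2X})^3$ algebraic at once); your dichotomy on $w(aX+b)=\min\{w(aX),w(b)\}$ versus $>$ is equivalent, and the substitution $\tau=aX/b+1$ recovers the algebraic residue by a slightly more explicit computation.
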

\begin{proof}
Set $Z= (aX+b)^{-1}X^3$. 
If $w(Z) \neq 0$, then  $Fw/Ev$ is ruled, by \Cref{equal_values}.
Thus we may assume that $w(Z) =0$.
Using \Cref{Zalgebraic}$(i),(iii)$ and $(iv)$, we may assume that  $\ovl{Z}$ is transcendental over $Ev$ and $w(X) \in 2vE$.
We fix $c\in \mg E$ with $v(c^2X) =0$.
If $\ovl{c^2X}$ is transcendental over $Ev$, then by \Cref{gaussextdef} we have 
$$ 0 = w(c^6(aX+b)) = w(ac^{4} (c^2X) +bc^6) = \min\{w(ac^{4}), w(bc^6)\},$$  
whereby $ v(\Delta_{a,b}) \in 12vE$.
Since $ v(\Delta_{a,b}) \notin 12vE$, we get that $\ovl{c^2X}$ is algebraic over $Ev$.
Hence $\ovl{c^6X^3}$ is also algebraic over $Ev$.
               It follows by \Cref{lemma_with_c}$(i)$ that $Fw/Ev$ is ruled. 
\end{proof}

\section{Non-ruled valuations on elliptic function fields} 

Let $E$ be a field and $v$ be a valuation on $E$ such that $v(6) =0$.
As before, we fix two elements $a,b \in E$ with $\Delta_{a,b}\neq 0$. 
Let $F/E$ be the elliptic function field given by $\mc{E}_{a,b}$, that is,  $$F =E(X)[\sqrt{X^3+aX+b}]\,.$$ 
In \Cref{potentialgoodreduction}, we have seen that, if $v(\Delta_{a,b}) = \min \{v(a^3), v(b^2)\}\notin 12vE$, then $Fw/Ev$ is ruled for any residually transcendental extension $w$ of $v$ to $F$.
We now investigate the situation where $v(\Delta_{a,b}) = \min \{v(a^3), v(b^2)\}$ and $v$ has an extension $w$ to $F$ such that $Fw/Ev$ is transcendental and non-ruled.

\begin{thm}
\label{delta_unit} 
Assume that $a,b \in {\mc O}_v$  and $\Delta_{a,b} \in  \mg{\mc O}_v$. 
Let $w$ be a residually transcendental extension of $v$ to $F$. Then $Fw/Ev$ is  non-ruled if and only if $w|_{E(X)}$ is the Gauss extension of $v$ to $E(X)$ with respect to $X$.   
Moreover, in this case, $w$ is the unique extension of $w|_{E(X)}$ to $F$, the extension $w$ of $v$ is unramified and $Fw/Ev$ is the function field of the elliptic curve $\mc{E}_{\ovl{a},\ovl{b}}$ over $Ev$.
\end{thm}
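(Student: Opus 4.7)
Observe first that $a, b \in \mc O_v$ together with $\Delta_{a,b} \in \mg{\mc O}_v$ forces $\min\{v(a^3), v(b^2)\} = 0 = v(\Delta_{a,b})$, so we are in the regime where \Cref{Zalgebraic} applies. For the direction $(\Leftarrow)$, suppose $w|_{E(X)}$ is the Gauss extension of $v$ with respect to $X$, so $w(X) = 0$ and $\ovl X$ is transcendental over $Ev$. Setting $u := X^3 + aX + b$, the residue $\ovl u = \ovl X^3 + \ovl a \ovl X + \ovl b$ is a nonzero polynomial in $\ovl X$, so $w(u) = 0$; moreover $\ovl u$ is a separable cubic in $Ev[\ovl X]$ since its discriminant $\ovl{\Delta_{a,b}}$ is a unit in $Ev$, and hence $\ovl u$ is not a square in $Ev(\ovl X)$. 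Applying \Cref{BG2.2} to the quadratic extension $F = E(X)[\sqrt u]$ over $E(X)$ then yields that $w$ is the unique, unramified extension of $w|_{E(X)}$ to $F$ and that $Fw = Ev(\ovl X)[\sqrt{\ovl u}]$ is the function field of the elliptic curve $\mc E_{\ovl a, \ovl b}$ over $Ev$, which is non-ruled.

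For the direction $(\Rightarrow)$, assume $Fw/Ev$ is non-ruled; the plan is to successively eliminate all scenarios from Section~5 that would force ruledness. Set $Z = (aX+b)^{-1}X^3$. First, \Cref{equal_values} applied to $X^3 + aX + b$ forces $w(X^3) = w(aX+b)$, i.e.,\ $w(Z) = 0$. Since $v(\Delta_{a,b}) = \min\{v(a^3), v(b^2)\}$, parts $(i)$ and $(iii)$ of \Cref{Zalgebraic} together rule out $\ovl Z$ being algebraic over $Ev$, so $\ovl Z$ is transcendental; part $(iv)$ rules out $w(X) \notin 2vE$, so I pick $c \in \mg E$ with $w(c^2 X) = 0$. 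Then \Cref{lemma_with_c}$(i)$ excludes $\ovl{c^2 X}$ being algebraic over $Ev$; hence $w|_{E(X)}$ is the Gauss extension with respect to $c^2 X$.

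The main obstacle is to sharpen this to the Gauss extension with respect to $X$ itself, i.e.,\ to show $v(c) = 0$. I evaluate $w(c^6(aX+b))$ in two ways. Directly, $w(Z) = 0$ gives $w(c^6(aX+b)) = 6v(c) + 3w(X) = 0$; and via \Cref{gaussextdef} for the Gauss extension with respect to $c^2 X$, expanding $c^6(aX+b) = c^4 a \cdot (c^2 X) + c^6 b$, one obtains $w(c^6(aX+b)) = \min\{4v(c) + v(a),\, 6v(c) + v(b)\}$. Using the identities $3(4v(c) + v(a)) = 12v(c) + v(a^3)$ and $2(6v(c) + v(b)) = 12v(c) + v(b^2)$, the equality $\min\{4v(c) + v(a),\, 6v(c) + v(b)\} = 0$ upgrades to $12v(c) + \min\{v(a^3), v(b^2)\} = 0$. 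Since $\min\{v(a^3), v(b^2)\} = 0$ and $vE$ is torsion-free, $v(c) = 0$. Hence $w(X) = 0$ and $\ovl X = \ovl{c^{-2}}\,\ovl{c^2 X}$ is transcendental over $Ev$, which is the desired Gauss extension; the moreover part then follows at once from the $(\Leftarrow)$ argument.
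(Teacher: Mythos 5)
Your proof is correct, and while the forward implication and the "moreover" part coincide with the paper's argument (Gauss extension plus \Cref{BG2.2}), your converse takes a genuinely different route to the key claim that $w(X)=0$. The paper, after establishing via \Cref{equal_values} and \Cref{Zalgebraic}$(i)$,$(iii)$ that $w(Z)=0$ and $\ovl{Z}$ is transcendental, runs a three-way case distinction on the comparison of $w(aX)$ and $w(b)$, using $v(\Delta_{a,b})=0$ in each branch to force $w(X)=0$. You instead invoke \Cref{Zalgebraic}$(iv)$ to get $w(X)\in 2vE$, then \Cref{lemma_with_c}$(i)$ (applied to $(c^2X)^3$) to see that $w|_{E(X)}$ is the Gauss extension with respect to $c^2X$, and finally compute $w(c^6(aX+b))$ both via $w(Z)=0$ and via the Gauss formula of \Cref{gaussextdef}; the resulting identity $12v(c)+\min\{v(a^3),v(b^2)\}=0$ together with $\min\{v(a^3),v(b^2)\}=0$ and torsion-freeness of $vE$ yields $v(c)=0$. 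This is arguably more systematic: it isolates the normalization of the Gauss variable as a single clean valuation computation rather than a case analysis, at the cost of leaning harder on the two auxiliary lemmas. One small point you should make explicit: \Cref{lemma_with_c} has $wF=wE(X)$ among its hypotheses, which you never verify; it does hold, since otherwise \Cref{funda_quad} gives $Fw=E(X)w$ and then \Cref{RRT} makes $Fw/Ev$ ruled, contradicting your standing assumption -- the paper inserts exactly this one-line argument where it needs the same hypothesis.
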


\begin{proof}
Suppose first that $w|_{E(X)}$ is the Gauss extension of $v$ to $E(X)$ with respect to $X$.
By \Cref{gaussextdef}, we have that $wE(X) =vE$, $E(X)w  =Ev(\ovl{X})$ and $w(Y^2) = w(X^3+aX+b) = \min \{v(1), v(a), v(b)\} = 0$, where we are using that $a,b\in\mc{O}_v$ and $4a^3+27b^2=\Delta_{a,b}\in\mg{\mc{O}}_v$.
By \Cref{BG2.2}, we obtain that
$$Fw = E(X)w \left[\,\ovl{Y}\,\right] = Ev(\ovl{X})\left[\sqrt{ \ovl{X}^3+\ovl{a} \ovl{X} + \ovl{b}}\,\right].$$
Since $\Delta_{\ovl{a},\ovl{b}} = \ovl{\Delta_{a,b}} \neq 0$ in $Ev$, we obtain that $Fw/Ev$ is the function field of the elliptic curve $\mc{E}_{\ovl{a},\ovl{b}}$ over $Fv$, so in particular it is non-ruled.

To prove the converse implication, we suppose now that $Fw/Ev$ is non-ruled. 
Set $Z =(aX+b)^{-1}X^3$.
Since $Fw/Ev$ is non-ruled, we have $w(Z) =0$, by \Cref{equal_values}.
Since $v(\Delta_{a,b}) =0 =\min\{v(a^3), v(b^2)\}$, it follows by \Cref{Zalgebraic}  $(i)$ and $(iii)$ that $\ovl{Z}$ is transcendental over $Ev$.
In particular $w(Z)=w(Z+1)=0$.
Since $Y^2=(aX+b)(Z+1)$, we get that $$w(Y^2)=w(X^3) =  w(aX+b) \geq \min \{w(aX), w(b)\}.$$
We claim that $w(X) =0$.

If $w(aX)<w(b)$, then $w(X^3)=w(aX+b)=w(aX)<w(b)$ and hence $w(b^2)>w(X^6)=w(a^3)=w(\Delta_{a,b})=0$, whereby 
 $w(a)=0=w(X)$.
 
If $w(aX)>w(b)$, then $w(X^3)=w(aX+b)=w(b)<w(aX)$ and hence $w(a^3)>w(X^6)=w(b^2)=w(\Delta_{a,b})=0$, whereby  $w(b)=0=w(X)$.

Assume now that $w(aX) = w(b)$. 
Then $w(X) = w(a^{-1}b)$.
Set $V = b^{-1}aX$.
Since $Fw/Ev$ is non-ruled, $wF=wE(X)$.
Otherwise, if $wF\neq wE(X)$ then by \Cref{funda_quad}, $Fw=E(X)w$ and hence $Fw/Ev$ is ruled by \Cref{RRT}. Now \Cref{lemma_with_c}$(i)$ yields that $\ovl{V}^3$ is transcendental over $Ev$. 
Thus $\ovl{V}$ is transcendental over $Ev$, and in particular $w(V+1)=w(V)=0$.
Since $aX+b=b(V+1)$, it follows that $w(b^{3}a^{-3})=w(X^3) = w(aX+b)= w(b)$, whereby $w(b^2)=w(a^3)$.
Since $\min\{v(a^3), v(b^2)\}=v(\Delta_{a,b}) =0$, we conclude that $w(a) = w(b)=0$ and hence $w(X)=0$.

Hence we have in every case that $w(X) =0$.
Since $\ovl{Z} = (\ovl{a} \ovl{X} + \ovl{b})^{-1} \ovl{X}^3$ is transcendental over $Ev$, we get that $\ovl{X}$ is also transcendental over $Ev$. Thus $w|_{E(X)}$ is the Gauss extension with respect to $X$.

Since $[Fw: E(X)w] =2=[F:E(X)]$, it follows by \Cref{funda_quad} that $w$ is the unique extension of $w|_{E(X)}$ to $F$ and $wF=wE(X)=vE$. 
\end{proof}

\begin{cor}
\label{unique_for_good_reduction}
Suppose that $a,b\in \mc{O}_v$ and $\Delta_{a,b}\in\mg{\mc O}_v$.
Then there exists a unique extension $w$ of $v$ to $F$ such that 
$Fw/Ev$ is transcendental and non-ruled. 
Moreover, for this extension, we have $wF=vE$, and 
$Fw/Ev$ is the function field of an elliptic curve defined over $Ev$.
\end{cor}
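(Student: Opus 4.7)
The plan is to read this corollary as a repackaging of \Cref{delta_unit}, which already characterises exactly when an extension $w$ of $v$ to $F$ gives a non-ruled residue field extension, namely precisely when $w|_{E(X)}$ is the Gauss extension with respect to $X$. So the task reduces to producing one such extension, and invoking the uniqueness clause of \Cref{delta_unit} for the rest.

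\textbf{Existence.} First I would apply \Cref{gaussextdef} to get the Gauss extension $v_0$ of $v$ to $E(X)$ with respect to $X$; this is residually transcendental, with $E(X)v_0=Ev(\ovl X)$ and $v_0E(X)=vE$. Since $F/E(X)$ is a finite extension, $v_0$ admits at least one extension $w$ to $F$ (Chevalley's Theorem, as recalled in \Cref{valonRFF}). Because $[Fw:E(X)v_0]\leq [F:E(X)]=2<\infty$ by \Cref{funda_ineq}, $Fw$ is algebraic over $E(X)v_0$; since $E(X)v_0/Ev$ is transcendental, so is $Fw/Ev$, i.e.\ $w$ is residually transcendental. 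Now the hypotheses $a,b\in\mc{O}_v$ and $\Delta_{a,b}\in\mg{\mc O}_v$ put us exactly in the setting of \Cref{delta_unit}, and since $w|_{E(X)}=v_0$ is by construction the Gauss extension with respect to $X$, the ``if'' direction of \Cref{delta_unit} yields that $Fw/Ev$ is non-ruled (and is the function field of $\mc{E}_{\ovl a,\ovl b}$ over $Ev$), together with $wF=v_0E(X)=vE$.

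\textbf{Uniqueness.} Now let $w'$ be any extension of $v$ to $F$ with $Fw'/Ev$ transcendental and non-ruled. Then $w'$ is residually transcendental by definition, so the ``only if'' direction of \Cref{delta_unit} forces $w'|_{E(X)}$ to be the Gauss extension of $v$ to $E(X)$ with respect to $X$, which is $v_0$. The ``moreover'' clause of \Cref{delta_unit} then asserts that $w'$ is the unique extension of $v_0$ to $F$, so $w'=w$.

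\textbf{Remaining conclusions.} The equality $wF=vE$ and the identification of $Fw/Ev$ as the function field of an elliptic curve (namely $\mc{E}_{\ovl a,\ovl b}$ over $Ev$) were already obtained in the existence step via \Cref{delta_unit}. There is no real obstacle here, as this corollary is a direct packaging of \Cref{delta_unit}; the only minor point to be careful about is the initial argument that residue transcendence of $w$ over $v$ follows automatically once $w|_{E(X)}$ is the Gauss extension, which is an immediate consequence of the Fundamental Inequality.
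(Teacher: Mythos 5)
Your proposal is correct and follows exactly the route the paper intends: the paper's own proof of this corollary is simply ``This is clear by \Cref{delta_unit}'', and you have spelled out the existence, uniqueness, and unramifiedness details that this citation compresses. The only point worth noting is that your careful verification that any extension of the Gauss extension to $F$ is automatically residually transcendental (via the Fundamental Inequality) is a detail the paper leaves implicit, and it is handled correctly.
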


\begin{proof}
This is clear by \Cref{delta_unit}.
\end{proof}

Using \Cref{delta_unit} we can easily obtain examples of elliptic function fields $F/E$ for which $v(\Delta_{a,b}) =  \min\{ v(a^3), v(b^2)\}$ and an extension $w$ of $v$ to $F$ such that $Fw/Ev$ is transcendental and non-ruled. 
We now give such an example where $v(\Delta_{a,b} ) > \min\{v(a^3), v(b^2\})$. 
The following example stems from \cite[(3.10)]{TVGY06}.

\begin{ex}
Let $E=\mathbb{R}(\!(t)\!)$ and $v$ be the $t$-adic valuation on $E$, which has $vE=\zz$ and $Ev=\mathbb{R}$.
Consider the polynomial $f =(tS-1)(S^2+1)\in E[S]$ and the elliptic curve $\mc{E}: Y^2=f$.
Let $F$ be the function field of $\mc{E}$, that is  $F=E(S)[\sqrt{f}].$
 Let $w$ be a valuation on $F$ such that  $w|_{E(S)}$ is the Gauss extension of $v$ to $E(S)$ with respect to $S$.
Then $\ovl{S} =S+\mf m_w\in Fw$ is transcendental over $Ev$, and 
by \Cref{gaussextdef}, we have $E(S)w = \rr(\ovl{S})$ and $wE(S) =vE$. 
Note that $w (f (S)) =0$. 
Using \Cref{BG2.2}, we get that $wF =vE$ and
\begin{center}
$Fw=E(S)w\left[\sqrt{\ovl{f}}\,\,\right]=\mathbb{R}\left(\ovl{S}\right)\left[\sqrt{-(\ovl{S}^2+1)}\,\right]. $
\end{center}
Hence $Fw/Ev$ is the function field of the the conic $\mc C_{-1,-1}$.
Since this conic has no rational point over $\rr$, using \Cref{conicsplit}, we get that $Fw/Ev$ is not a rational function field. Thus $Fw/Ev$ is non-ruled. 

Substituting $S=tX+\frac{1}{3t}$ in $Y^2 =f$, we get the equation $$Y^2= X^3+\frac{1}{3t^4} \left(3t^2 -1\right)X -\frac{2}{27t^6}\left(9t^2+1\right).$$
Thus $F/E$ is the function field of the elliptic curve $\mc{E}_{a,b}$ where $a=\frac{1}{3t^4} \left(3t^2 -1\right)$ and $b=-\frac{2}{27t^6}\left(9t^2+1\right)$.
We observe that  $\Delta_{a,b} = \frac{4}{t^{10}} (t^2+1)^2$ and further that  $v(\Delta_{a,b}) = -10 >-12=v(a^3) =v(b^2)$.
\end{ex}

\begin{lem}\label{badreductionlemma}
Assume that $v(\Delta_{a,b})>\min\{v(a^3), v(b^2)\}$ and $v(ab) \in 2vE$. 
Then there exists $d\in\mg{E}$ such that $v(d^{12}a^3) = v(d^{12}b^2) = 0$, and letting $\alpha =d^4a$ and $\beta =  d^6b$, we have that $v(\Delta_{\alpha,\beta})>0$ and $F/E$ is the function field of the elliptic curve $\mc{E}_{\alpha, \beta}$ over $E$.
\end{lem}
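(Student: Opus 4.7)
The plan is to proceed in three steps: first exploit the non-strict-ultrametric inequality in $\Delta_{a,b}=4a^3+27b^2$ to force $v(a^3)=v(b^2)$; then combine this equality with the hypothesis $v(ab)\in 2vE$ to derive the divisibilities $v(a)\in 4vE$ and $v(b)\in 6vE$; then pick $d\in \mg{E}$ of the appropriate value and verify all claims by direct computation.

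Since $v(6)=0$ gives $v(4)=v(27)=0$, the hypothesis $v(\Delta_{a,b})>\min\{v(a^3),v(b^2)\}$ forces $v(a^3)=v(b^2)$ by the standard observation that the ultrametric inequality is an equality whenever the two summands have distinct values. Hence $3v(a)=2v(b)$.

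The key divisibility step is the following. From $3v(a)=2v(b)$ one gets $v(a)=2(v(b)-v(a))\in 2vE$, so $v(a)=2\gamma'$ for some $\gamma'\in vE$; cancelling $2$ in the torsion-free group $vE$ from $2v(b)=3v(a)=6\gamma'$ yields $v(b)=3\gamma'$. Combining $v(ab)=5\gamma'\in 2vE$ with the obvious $6\gamma'\in 2vE$ gives $\gamma'\in 2vE$, say $\gamma'=2\gamma$ with $\gamma\in vE$, so that $v(a)=4\gamma$ and $v(b)=6\gamma$. This is the only place the hypothesis $v(ab)\in 2vE$ really gets used, and it is the main technical point of the lemma, though nothing beyond torsion-freeness of $vE$ is needed.

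Now choose $d\in\mg E$ with $v(d)=-\gamma$ and set $\alpha=d^4a$, $\beta=d^6b$. Then $v(\alpha)=v(\beta)=0$, giving $v(d^{12}a^3)=v(d^{12}b^2)=0$. Since $\Delta_{\alpha,\beta}=4\alpha^3+27\beta^2=d^{12}\Delta_{a,b}$, we have
$$v(\Delta_{\alpha,\beta})=v(\Delta_{a,b})-12\gamma>\min\{v(a^3),v(b^2)\}-12\gamma=0.$$
Finally, substituting $X'=d^2X$ yields $E(X')=E(X)$ and $(X')^3+\alpha X'+\beta=d^6(X^3+aX+b)$, so $F=E(X)[\sqrt{X^3+aX+b}]=E(X')[\sqrt{(X')^3+\alpha X'+\beta}]$ is the function field of $\mc{E}_{\alpha,\beta}$ over $E$, as required.
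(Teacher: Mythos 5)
Your proof is correct and follows essentially the same route as the paper's: deduce $v(a^3)=v(b^2)$ from the ultrametric equality, use $v(ab)\in 2vE$ together with torsion-freeness of $vE$ to get $v(a^3)=v(b^2)\in 12vE$ (equivalently $v(a)\in 4vE$, $v(b)\in 6vE$), choose $d$ accordingly, and verify the claims via the substitution $X'=d^2X$, $Y'=d^3Y$. The only difference is cosmetic bookkeeping in the divisibility step.
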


\begin{proof}
Since $v(\Delta_{a,b}) = v(4a^3+27b^2) >\min\{v(a^3), v(b^2)\}$ we have $ v(a^3) =v(b^2)$.
Thus $v(a) \in 2vE$, and since $v(ab) \in 2vE$, we also have $v(b)\in 2vE$.
It follows that $v(a^3)=v(b^2) \in 12vE$.
We fix $d\in \mg E$ with $v(d^{12}a^3) = v(d^{12}b^2) = 0$.
Then $(d^3Y)^2=(d^2X)^3+ad^4(d^2X)+d^6b$. 
Letting $\alpha =ad^4$ and $\beta =  d^6b$,
we obtain that the desired conditions are satisfied.
\end{proof}

In view of \Cref{unique_for_good_reduction} and \Cref{badreductionlemma}, we turn our attention to residually transcendental extensions of $v$ to $F$ in the case where $a,b \in \mg{\mc O}_v$ and $v(\Delta_{a,b})>0$. 
The following proposition describes a type of residually transcendental extension $v$ on $F/E$ whose residue field extension is possibly non-ruled.

\begin{prop}\label{badreductionexample}
Assume that $a,b \in \mg{{\mc O}}_v$, $v(\Delta_{a,b})>0$ and $v(\Delta_{a,b})\in 2vE$. Let $c\in E$ be such that $v(c^2 \Delta_{a,b}) =0$. Set $S = c(6aX+9b)$ and $u=c^2\Delta_{a,b}$.
Let $w$ be a valuation on $F$ such that $w|_{E(X)}$ is the Gauss extension of $v$ to $E(X)$ with respect to $S$.   
Then $wF =vE$ and $Fw/Ev$ is the function field of the conic $\mc C_{-2\ovl{ab}, \ovl{u}}$. In particular, $Fw/Ev$ is ruled if and only if the conic $\mc C_{-2\ovl{ab}, \ovl{u}}$ over $Ev$ has a rational point.
\end{prop}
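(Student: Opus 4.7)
The plan is to derive a single cubic identity in the variable $S$ over $E(X)$, from which both the value $w(Y)$ and the residue extension can be read off directly. First I would verify by polynomial expansion, using $S=c(6aX+9b)$, $u=c^2\Delta_{a,b}$ and $Y^2=X^3+aX+b$, the identity
$$216\,a^3 c^3\,Y^2 \;=\; S^3 - 27bc\,S^2 + 9uS - 27bcu \;=\; S(S^2+9u)-27bc(S^2+u).$$
The hypotheses force $v(c)=-\tfrac{1}{2} v(\Delta_{a,b})<0$ while $v(u)=0$, so that $\ovl{a},\ovl{b},\ovl{u}\in\mg{Ev}$.

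Given this identity, the valuative data follow mechanically. Since $w|_{E(X)}$ is the Gauss extension with respect to $S$ and the coefficients $1,-27bc,9u,-27bcu$ on the right-hand side have $v$-values $0,v(c),0,v(c)$, the $w$-value of the right-hand side equals $v(c)$. Equating with the $w$-value of the left gives $3v(c)+2w(Y)=v(c)$, hence $w(Y)=-v(c)>0$. Setting $Y':=cY\in\mg{\mc O}_w$ and dividing the identity by $c$ yields
$$216\,a^3\,{Y'}^2 \;=\; \tfrac{S(S^2+9u)}{c}\,-\,27b(S^2+u),$$
whose left side has $w$-value $0$ and whose first summand on the right has $w$-value $-v(c)>0$ and hence vanishes modulo $\mf{m}_w$. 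Taking residues gives
$$216\,\ovl{a}^3\,\ovl{Y'}^2 \;=\; -27\,\ovl{b}\,(\ovl{S}^2+\ovl{u}).$$

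It remains to identify $Fw$ with the function field of $\mc C_{-2\ovl{ab},\ovl{u}}$ and to determine the value group. Since $[F:E(X)]=2$ and $w(Y')=0$, we have $Fw=E(X)w[\ovl{Y'}]=Ev(\ovl{S})[\ovl{Y'}]$, and the displayed residue equation exhibits $\ovl{Y'}$ as a nontrivial quadratic element over $Ev(\ovl{S})$. Rewriting the relation as $8\ovl{a}^3\,\ovl{Y'}^2+\ovl{b}\,\ovl{S}^2+\ovl{b}\,\ovl{u}=0$ and normalizing coefficients in $\scg{Ev}$ (using $8\ovl{a}^3\equiv 2\ovl{a}$ and $\ovl{b}^2\equiv 1$ modulo squares), one identifies this ternary quadratic form with the defining equation of the smooth affine conic $\mc C_{-2\ovl{ab},\ovl{u}}$ over $Ev$, up to $Ev$-isomorphism. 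The equality $wF=vE$ then follows from the Fundamental Inequality applied to $F/E(X)$, since $[Fw:E(X)w]=2=[F:E(X)]$ forces $[wF:vE]=1$, whence $wF=wE(X)=vE$. The final biconditional is immediate from \Cref{conicsplit}.

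The main technical burden lies in verifying the cubic identity in $S$ and thereafter organizing the residue computation so that the conic structure emerges transparently after the normalization $Y':=cY$. The subtlest step is the final identification of the conic up to $Ev$-isomorphism, which requires careful tracking of square-class representatives in $\scg{Ev}$.
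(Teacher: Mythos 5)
Your proposal follows essentially the same route as the paper: your cubic identity is exactly the paper's identity $(6a)^3c^2Y^2=c^{-1}S(S^2+9c^2\Delta_{a,b})-27b(S^2+c^2\Delta_{a,b})$ multiplied through by $c$, your Gauss-valuation bookkeeping reproduces the paper's computation $w(c^2Y^2)=0$, and the residue relation $216\,\ovl{a}^3\ovl{Y'}^2=-27\,\ovl{b}(\ovl{S}^2+\ovl{u})$ together with \Cref{BG2.2}/\Cref{funda_ineq} and \Cref{conicsplit} is precisely how the paper concludes. One caveat concerns your final normalization, which is also the delicate point in the paper's own proof. Your relation gives the ternary form $\la 8\ovl{a}^3,\ovl{b},\ovl{b}\ovl{u}\ra$, which after scaling by $\ovl{b}$ and reducing modulo squares is $\la 2\ovl{ab},1,\ovl{u}\ra$, i.e.\ the conic $Y^2=-2\ovl{ab}X^2-\ovl{u}Z^2$; equivalently, the residue field is $Ev(\ovl{S})[\sqrt{-2\ovl{ab}\,\ovl{S}^2-2\ovl{ab}\ovl{u}}]$, the function field of $\mc C_{-2\ovl{ab},\,-2\ovl{ab}\ovl{u}}\simeq \mc C_{-2\ovl{ab},\,-\ovl{u}}$, since $(\alpha,\alpha\beta)\simeq(\alpha,-\beta)$ as quaternion algebras. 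Passing from this to $\mc C_{-2\ovl{ab},\,\ovl{u}}$ as literally stated requires $(-2\ovl{ab},-1)$ to split over $Ev$, which is not automatic; your "careful tracking of square-class representatives," carried out to the end, lands on $\mc C_{-2\ovl{ab},-\ovl{u}}$ rather than $\mc C_{-2\ovl{ab},\ovl{u}}$. Since the paper makes exactly the same unjustified jump (from $\mc C_{-2\ovl{ab},-2\ovl{abu}}$ to $\mc C_{-2\ovl{ab},\ovl{u}}$), this is not a defect of your argument relative to the paper's, but you should not present the identification as a routine square-class normalization.
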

\begin{proof}
Since $E(X)=E(S)$ and $w|_{E(X)}$ is the Gauss extension with respect to $S$, we have $E(X)w =Ev(\ovl S)$ and $wE(X) = vE$, by \Cref{gaussextdef}. 
Note that 
$$ (6a)^3 c^2Y^2 = c^{-1}S(S^2 + 9 c^2\Delta_{a,b})- 27b (S^2 +c^2\Delta_{a,b} ) .$$
Since $w(c^2\Delta_{a,b}) =0$, we have $w(S^2 +c^2\Delta_{a,b} )=w (S(S^2 + 9 c^2\Delta_{a,b})) =0$, and since $v(c) < 0=w((6a)^3)$, we obtain that  $w(c^2Y^2) =  w(S^2 +c^2\Delta_{a,b} ) = 0$ and 
$\ovl{(4a^2cY)}^2 = -2\ovl{ab} (\ovl{S}^2 + \ovl{c^2\Delta_{a,b}})\in \mg{Ev(\ovl{S})}\setminus \sq{Ev(\ovl{S})}$. 
By \Cref{BG2.2}, we get that 
$$Fw = 
Ev(\ovl{S}) \left[\sqrt{ -2\ovl{ab}(\ovl{S}^2 + \ovl{c^2\Delta_{a,b}} ) }\right].$$  
Hence $Fw/Ev$ is the function field of the conic $\mc{C}_{-2\ovl{ab}, -2\ovl{abu}}$, which is isomorphic to the function field of the conic ${\mc C}_{-2\ovl{ab}, \ovl{u}}$.
Hence the final statement follows from \Cref{conicsplit}.
\end{proof}

\begin{prop}\label{unique_for_bad_reduction}
Assume that $ v(\Delta_{a,b}) > v(a^3) =v(b^2)$.
Let $w$ be a residually transcendental extension of $v$ to $F$. 
Then $Fw/Ev$ is non-ruled if and only if the following conditions hold: 
\begin{enumerate}[$(i)$]
\item  $v(ab), v(\Delta_{a,b}) \in 2vE$, and for $u_1 \in  ab \sq E \cap \mg {\mc O}_v$ and $u_2 \in \Delta_{a,b} \sq E \cap \mg {\mc O}_v$, the conic $\mc{C}_{-2 \ovl{u_1}, \ovl{u_2}}$ does not have a rational point over $Ev$.
\item  $w|_{E(X)}$  is the Gauss extension with respect to $c(6aX+9b)$ for $c\in \mg E$ such that  $v(c^2 \Delta_{a,b}) = 0$.
\end{enumerate}
Moreover, if these conditions hold, then $w$ is the unique extension of $w|_{E(X)}$ to $F$,  the extension $w$ of $v$ is unramified and $Fw/Ev$ is the function field of the conic $\mc{C}_{-2 \ovl{u_1}, \ovl{u_2}}$ over $Ev$.
\end{prop}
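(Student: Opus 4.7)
The plan is to prove both directions of the biconditional using a careful valuation-theoretic analysis, centered on the algebraic identity
\[
8a^3 Y^2 \,=\, U^3 - 9bU^2 + \Delta_{a,b}(U - b), \qquad U \,:=\, 2aX + 3b,
\]
which one verifies by expanding from $Y^2 = X^3 + aX + b$ together with $\Delta_{a,b} = 4a^3 + 27b^2$. Sufficiency reduces to \Cref{badreductionexample} after scaling via \Cref{badreductionlemma}, while necessity proceeds by a case analysis that forces $w|_{E(X)}$ into the Gauss extension form with respect to $S := c(6aX + 9b) = 3cU$.

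For sufficiency ($\Leftarrow$), assume $(i)$ and $(ii)$. The condition $v(ab) \in 2vE$ enables \Cref{badreductionlemma}: there exists $d \in \mg E$ such that $\alpha := d^4 a,\ \beta := d^6 b \in \mg{\mc O}_v$, $v(\Delta_{\alpha, \beta}) > 0$, and $F$ is realized via $X' := d^2 X$ as the function field of $\mc E_{\alpha, \beta}$. Setting $c' := cd^{-6}$ yields $v((c')^2 \Delta_{\alpha, \beta}) = 0$ and $c'(6\alpha X' + 9\beta) = S$, so hypothesis $(ii)$ transports intact. \Cref{badreductionexample} then gives $wF = vE$ and identifies $Fw$ with the function field of $\mc{C}_{-2\ovl{\alpha\beta},\,\ovl{(c')^2\Delta_{\alpha,\beta}}}$, which is $Ev$-isomorphic to $\mc{C}_{-2\ovl{u_1},\ovl{u_2}}$ by a direct square-class comparison ($\ovl{\alpha\beta}/\ovl{u_1}$ and $\ovl{(c')^2 \Delta_{\alpha,\beta}}/\ovl{u_2}$ both lie in $\sq{Ev}$). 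By $(i)$ this conic has no rational point over $Ev$, so $Fw/Ev$ is non-ruled by \Cref{conicsplit}. The uniqueness and unramifiedness of $w$ above $w|_{E(X)}$ come from \Cref{BG2.2} applied to the quadratic extension $F/E(X)$.

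For necessity ($\Rightarrow$), assume $Fw/Ev$ is non-ruled. Setting $Z := (aX+b)^{-1}X^3$, \Cref{equal_values} forces $w(Z) = 0$; \Cref{Zalgebraic}$(ii)$ excludes $\ovl Z$ transcendental (since $v(\Delta_{a,b}) > \min\{v(a^3), v(b^2)\}$), and \Cref{Zalgebraic}$(i)$ then forces $\ovl Z = -1$. Combined with \Cref{a3=b2}, this gives $w(Y^2) > w(X^3) = w(b) = 3w(X)$ and $v(a) = 2w(X)$. Plugging $U$ into the identity and comparing valuations of $U^2(U - 9b)$ and $\Delta_{a,b}(U - b)$ yields a dichotomy: case $(A)$ $w(U) = w(b)$ with $w(U - 9b) > w(b)$, where $\ovl X$ is close to the simple root $3\ovl b/\ovl a$ of the residue cubic; case $(B)$ $w(U) > w(b)$, where $\ovl X$ is close to the double root $-3\ovl b/(2\ovl a)$. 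Case $(B)$ further subdivides according to the sign of $2w(U) - v(\Delta_{a,b})$.

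The main obstacle is ruling out case $(A)$ and the sub-cases of $(B)$ with $2w(U) \neq v(\Delta_{a,b})$. In each, a direct residue computation from the identity exhibits $\ovl Y$ (after unit rescaling) as an element algebraic over $Ev$; for instance, in $(B)$ with $2w(U) < v(\Delta_{a,b})$ the unit $bY^2/U^2$ has residue $-9\ovl{b^2/(8a^3)} \in \mg{Ev}$, and an argument in the spirit of \Cref{u_algebraic_implies_ruled} shows $Fw/Ev$ is ruled, contradicting the hypothesis. Thus $2w(U) = v(\Delta_{a,b})$. Since the non-ruled assumption and \Cref{funda_quad} force the proper quadratic residue extension to satisfy $wF = wE(X)$, and a separate argument adjoining $\ovl{U^2/\Delta_{a,b}}$ shows $E(X)w$ is transcendental over $Ev$ with $w|_{E(X)}$ forced to be Gauss (else $E(X)w$ would be algebraic over $Ev$), we conclude $wE(X) = vE$ and hence $v(\Delta_{a,b}) = 2w(U) \in 2vE$. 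Choosing $c \in \mg E$ with $v(c^2\Delta_{a,b}) = 0$ gives $w(S) = 0$ for $S = 3cU$ with $\ovl S$ transcendental, which is condition $(ii)$. The identity further gives $2w(Y) = v(\Delta_{a,b}) - v(b)$, forcing $v(b) \in 2vE$; combined with $3v(a) = 2v(b)$ via the elementary identity $v(a) = 2(v(b) - v(a))$, we obtain $v(a) \in 2vE$, whence $v(ab) \in 2vE$. Finally, \Cref{badreductionexample} (applied after the transformation from the sufficiency direction) identifies $Fw$ with the function field of $\mc{C}_{-2\ovl{u_1},\ovl{u_2}}$, and the non-ruled hypothesis together with \Cref{conicsplit} forces this conic to have no rational point over $Ev$, completing $(i)$. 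The ``moreover'' clause follows as in the sufficiency direction.
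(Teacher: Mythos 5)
Your sufficiency direction matches the paper's (rescale via \Cref{badreductionlemma}, apply \Cref{badreductionexample} and \Cref{conicsplit}), and your opening reductions for necessity ($w(Z)=0$ by \Cref{equal_values}, $\ovl Z=-1$ by \Cref{Zalgebraic}, the dichotomy between $X$ near the simple root $3b/a$ and near the double root $-3b/(2a)$ of the residue cubic) are exactly the paper's. But the necessity direction has genuine gaps at the decisive steps. First, your mechanism for excluding case $(A)$ does not work: when $w(U)=w(b)$ and $w(U-9b)>w(b)$, \emph{both} terms $U^2(U-9b)$ and $\Delta_{a,b}(U-b)$ in your identity have value strictly greater than $3w(b)$, with no control over which dominates or how much they cancel, so neither $w(Y^2)$ nor any residue of a rescaled $Y$ is determined; in concrete examples (e.g.\ $E=\qq_5$, $a=b=2$, $w|_{E(X)}$ Gauss with respect to $(X-3)/5$) the relevant residue $\ovl{Y^2\phi^2}$ is transcendental. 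The paper handles this case by an entirely different device: the minimal polynomial $\Pol_\theta$ of $\theta=b^{-1}aX$ over $E(Y^2)$ reduces to $(T-3)(T+\frac32)^2$, and $\ovl\theta=3$ being a \emph{simple} root gives $E(X)w=E(Y^2)w$ and $wE(X)=wE(Y^2)$ via \Cref{ruled_applicable}, whence ruledness by \Cref{ruled}.

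Second, you do not treat the degenerate sub-case inside $2w(U)=v(\Delta_{a,b})$ where $\ovl{-9U^2\Delta_{a,b}^{-1}}=1$ (in the paper's notation, $w(S^2+b^{-2}\Delta_{a,b})>w(S^2)$); there the reduction to a conic collapses, and the paper needs a substantial separate argument (adjoin $\sqrt{-\Delta_{a,b}}$ via \Cref{scalarext-noresidueext_for_p}, compute another minimal polynomial $Q_{\theta_0}$ with separable split residue, conclude ruledness by \Cref{ruled}) to rule it out. Third, the passage from "the bad cases are excluded" to the precise conditions $(i)$ and $(ii)$ is asserted rather than proved: the claims that $\ovl S$ is transcendental, that $w|_{E(X)}$ is exactly the Gauss extension with respect to $S$, that $v(\Delta_{a,b})\in 2vE$, and that the conic $\mc C_{-2\ovl{u_1},\ovl{u_2}}$ has no $Ev$-point all come, in the paper, from reducing $F$ to the conic $F'=E(S)[\sqrt{g_2}]$ via \Cref{f_and_g} and then invoking the conic ruled residue theorem \Cref{BGRRT}, which is the engine producing precisely conditions $(i)$ and $(ii)$. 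Your sketch never invokes \Cref{BGRRT} in the necessity direction, and the substitute ("a separate argument adjoining $\ovl{U^2/\Delta_{a,b}}$ shows $w|_{E(X)}$ forced to be Gauss") is not an argument. The valuation bookkeeping you do supply (e.g.\ $v(a)=2(v(b)-v(a))\in 2vE$) is fine, but it is conditional on these unproved steps.
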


\begin{proof}
Suppose first that $(i)$ and $(ii)$ hold.  
By \Cref{badreductionlemma}, there exists $d\in \mg E$ with $v(d^4a) = v(d^6b) =0$.
Let $\alpha =ad^4$ and $\beta =  d^6b$.
Then $\Delta_{\alpha, \beta}=d^{12}\Delta_{a,b}$, $v(\Delta_{\alpha,\beta})>0$ and $F/E$ is the function field of the elliptic curve $\mc{E}_{\alpha, \beta}$ over $E$.
In view of $(ii)$, we fix $c\in \mg E$ such that $v(c^2 \Delta_{a,b}) =0$ and $w|_{E(X)}$ is the Gauss extension with respect to $c(6aX+9b)$.
Taking $ X' = d^2X$, we get that $E(X) =E(X')$ and $c(6aX+9b) = d^{-6}c (6 \alpha X' + 
 9 \beta)$. 
For $u_1=\alpha\beta$ and $u_2= c^2 d^{-12}\Delta_{\alpha,\beta}=c^2 \Delta_{a,b}$, 
we have $v(u_1)=v(u_2)=0$, and we obtain by \Cref{badreductionexample} that $Fw/Ev$ is the function field of $\mc{C}_{-2\ovl{u_1}, \ovl{u_2}}$ over $Ev$.
We conclude by $(i)$ and \Cref{{conicsplit}}
 that $Fw/Ev$ is non-ruled.

Suppose now conversely that $Fw/Ev$ is non-ruled. 
Set $Z=  (aX+b)^{-1}X^3$.
By \Cref{equal_values}, we have $w(Z) =0$.
Set $\ovl{Z} =Z+\mf m_w$.
By \Cref{Zalgebraic}, $(i)$ and $(ii)$, we obtain that  $\ovl{Z} = -1$, whereby $w(Z+1)>0$.
Then $Y^2 =(aX+b)(Z+1)$, and it follows that $w(Y^2) > w(aX+b) =w(X^3)$.
  
Since $w(a^3) =w(b^2)$, by \Cref{a3=b2}, we further have $w(X^3) = w(aX) =w(b)$.
Set $\theta = b^{-1}aX$.
The minimal polynomial of $\theta$ over the field $E(Y^2)$ expressed in the variable $T$ is given by
$$ \Pol_\theta = T ^3 + b^{-2}a^3(T +1 -b^{-1} Y^2 )\,.$$
Since $w(Y^2) >w(X^3) =w(b)$ and  $v(\Delta_{a,b}) > v(a^3) =v(b^2)$,  we have $\ovl{b^{-1}Y^2}=0$ and $\ovl{b^{-2}a^3} = -\frac{27}{4}$ in $Fw$.
It follows that $\ovl{\theta}$ is a root of 
$$\ovl{\Pol_\theta} = T^3 +\ovl{b^{-2}a^3}(T +1) = T^3-\hbox{$\frac{27}{4}$} (T +1) = (T-3)\left(T+\hbox{$\frac{3}{2}$}\right)^2 .$$
Hence $\ovl{\theta} = 3$ or $\ovl{\theta} = -\frac{3}{2}$ in $Fw$.

Since $Fw/Ev$ is non-ruled, \Cref{ruled_applicable} excludes the possibility that $\ovl{\theta}= 3$.
Therefore $\ovl{\theta} = -\frac{3}{2}$.
We set $S = 6\theta+9 = b^{-1}(6aX+9b)$.
Then $E(X) =E( S)$ and $w(S)> 0$.
In particular $w( S-27) =w( S-3) =0 $.
We have 
\begin{eqnarray*}
(36a^2b^{-1})^2 Y^2  & = & 6ab((S-27) S^2 + 9( S-3) b^{-2}\Delta_{a,b}  )\\
 & = & 6ab( S( S^2 + 9b^{-2}\Delta_{a,b})- 27(S^2 +b^{-2}\Delta_{a,b})).
\end{eqnarray*}
Hence $F\simeq E( S)[\sqrt{f_1+f_2}]$ for $f_1= 6ab( S-27) S^2$ and $f_2=54ab( S-3) b^{-2}\Delta_{a,b}$. 
For $i\in\{1,2\}$, we have that $E( S)(\sqrt{f_i})/E$ is a rational function field. 
Since $F\simeq E( S)(\sqrt{f_1+f_2})$ and $Fw/Ev$ is non-ruled, we conclude by \Cref{RRT_corollary} and \Cref{equal_values_ruled} that $w(f_1)=w(f_2)$.

Hence $w( S^2) = w(b^{-2}\Delta_{a,b})$ and consequently $w( S^2+ b^{-2}\Delta_{a,b})\geq w( S^2)$.
We claim that $w( S^2+ b^{-2}\Delta_{a,b})= w( S^2)$.

Suppose on the contrary that $w( S^2+ b^{-2}\Delta_{a,b})> w( S^2)$.
Then $-\ovl{(b S)^{-2}\Delta_{a,b}} = 1$. 
Let $\delta=\sqrt{-\Delta_{a,b}}$.
In order to obtain a contradiction with the hypothesis that $Fw/Ev$ is non-ruled, we may by \Cref{scalarext-noresidueext_for_p} extend $w$ to $F(\delta)$ and then replace $E$ by $E(\delta)$ and $F$ by $F(\delta)$.
We may thus assume that $\delta\in E$.

We have 
\begin{equation*}
S^3-27S^2+9b^{-2}\Delta_{a,b} S-27b^{-2}\Delta_{a,b} - (6ab^{-1})^3Y^2 = 0
\,. \label{cubicequation*}
\end{equation*}
Since $w(S)= w(b^{-1}\delta) > 0$,  we have $w(b^{-2}\Delta_{a,b} S)=w (S^3)>w(S^2)=w(b^{-2}\Delta_{a,b})$ and 
$w(S^2 +b^{-2}\Delta_{a,b}) > w(S^2) = w (b^{-2}\Delta_{a,b})$.
Hence
$w ((6ab^{-1})^3Y^2) > w (b^{-2}\Delta_{a,b})$, whereby $b^{-1}\Delta_{a,b}^{-1}(6a)^3 Y^2\in\mfm_w$.

Set $\theta_0 = \delta^{-1}b S$.
Then $E(X) =E(\theta_0)$ and  $w(\theta_0) =0$.
The minimal polynomial of $\theta_0$ over $E(Y^2)$ in the variable $T$ is $$\Pol_{\theta_0} = T^3-27\delta^{-1}bT^2-9T+27\delta^{-1}b+(6a)^3\Delta_{a,b}^{-1}\delta^{-1}Y^2.$$ 
We set $Q_{\theta_0} = b^{-1}\delta \Pol_{\theta_0}\in  E(Y^2)[T]$ and obtain that
$$Q_{\theta_0}=   b^{-1}\delta  T^3 - 27 T^2 - 9 b^{-1}\delta  T + 27 + b^{-1}\Delta_{a,b}^{-1}(6a)^3Y^2\,.$$
Since $b^{-1}\delta\in\mfm_w$ and $b^{-1}\Delta_{a,b}^{-1}(6a)^3 Y^2\in\mfm_w$, it follows that $Q_{\theta_0} \in \mc O_w[T]$ and $\ovl{Q_{\theta_0}}= -27(T^2 - 1)$ in $E(Y^2)w[T]$. 
In particular the residue polynomial $\ovl{Q_{\theta_0}}$ is separable and splits into linear factors.
By \Cref{simple_ext}, we conclude that
 $wE(X)=wE(Y^2)$ and  $E(X)w=E(Y^2)w$.
It follows by \Cref{ruled} that $Fw/Ev$ is ruled, which is a contradiction.

Thus we have established that $w(S ^2 +b^{-2}\Delta_{a,b} ) = w( S ^2)$.
Note that $$F\simeq E( S)[\sqrt{ S g_1 +g_2}]$$  for $g_1=6ab ( S^2 + 9b^{-2}\Delta_{a,b})$ and $g_2 = - 2\cdot 9^2 ab ( S^2 +b^{-2}\Delta_{a,b})$.
Since $w(S) >0 $, we obtain that $w( Sg_1) \geq w( S^3)> w( S^2) =w(g_2)$. 
It follows by \Cref{f_and_g} that $Fw = F'w'$ for some extension $w'$ of $w|_{E(X)}$ to $F' = E( S)\left[\sqrt{g_2}\right]$.
We now conclude by \Cref{BGRRT} that $(i)$ and $(ii)$ hold, $w$ is an unramified extension of $v$ and $Fw/Ev$ is the function field of the conic $\mc{C}_{-2 \ovl{u_1}, \ovl{u_2}}$ over $Ev$.
\end{proof}

\section{Reductions types and non-ruled residue field extensions}

Let $E$ be a field and let $v$ be a valuation on $E$ with $v(6) =0$. 
Let $F/E$ be an elliptic function field.
We say that $F/E$ \emph{is of good reduction with respect to $v$} if $F\simeq E(\mc{E}_{a,b})$ for some  $a,b\in\mc{O}_v$ with $\Delta_{a,b}\in\mg{\mc O}_v$.
In the following proposition, we characterize elliptic function fields with good reduction with respect to $v$.

\begin{prop}\label{reductiontypes}
Let $a,b\in E$ be such that $\Delta_{a,b}\neq 0$ and let $F/E$ be the function field of $\mc{E}_{a,b}$. Then $F/E$ is of good reduction with respect to $v$ if and only if $v(\Delta_{a,b})=\min\{v(a^3),v(b^2)\}\in 12 vE$. 
\end{prop}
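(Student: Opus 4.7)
The plan is to reduce both directions of the equivalence to the classical uniqueness of Weierstra{\ss} models over a field of characteristic different from $2$ and $3$: whenever $F \simeq E(\mc{E}_{a,b}) \simeq E(\mc{E}_{a',b'})$, there exists $d \in \mg E$ such that $a' = d^4 a$ and $b' = d^6 b$, in which case $\Delta_{a',b'} = d^{12}\Delta_{a,b}$. I would invoke this as a standard fact from the theory of elliptic curves: it reflects that any $E$-isomorphism between the underlying smooth projective genus-one curves can be promoted to one of pointed curves after composing on one side with the translation by the $E$-rational image of the point at infinity, and that such a pointed $E$-isomorphism between two Weierstra{\ss} curves necessarily takes the form $(X,Y)\mapsto(d^2 X, d^3 Y)$ in characteristic different from $2$ and $3$.

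For the forward direction, I would assume $F/E$ is of good reduction and pick $a',b'\in\mc O_v$ with $\Delta_{a',b'}\in\mg{\mc O}_v$ and $F\simeq E(\mc{E}_{a',b'})$. Using the uniqueness fact, I fix $d\in\mg E$ with $(a',b')=(d^4 a, d^6 b)$. The relation $\Delta_{a',b'}=d^{12}\Delta_{a,b}$ together with $v(\Delta_{a',b'})=0$ gives $v(\Delta_{a,b})=-12 v(d)\in 12vE$. From $v(a'^3)\geq 0$ one reads off $v(a^3)\geq v(\Delta_{a,b})$, and symmetrically $v(b^2)\geq v(\Delta_{a,b})$; combined with the converse inequality $v(\Delta_{a,b})\geq\min\{v(a^3),v(b^2)\}$ coming from $\Delta_{a,b}=4a^3+27 b^2$ and $v(6)=0$ (which forces $v(4)=v(27)=0$), this yields $v(\Delta_{a,b})=\min\{v(a^3),v(b^2)\}$.

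For the backward direction, starting from $v(\Delta_{a,b})=\min\{v(a^3),v(b^2)\}\in 12 vE$, I would choose $d\in\mg E$ with $12 v(d)=-v(\Delta_{a,b})$ and set $a'=d^4 a$, $b'=d^6 b$. Then $v(\Delta_{a',b'})=0$, so $\Delta_{a',b'}\in\mg{\mc O}_v$, and $v(a'^3)=v(a^3)-v(\Delta_{a,b})\geq 0$ gives $v(a')\geq 0$ (since $vE$ is torsion-free); the same argument yields $v(b')\geq 0$. The change of variable $X=d^{-2}X'$, $Y=d^{-3}Y'$ then presents $F$ as $E(X')[\sqrt{X'^3+a'X'+b'}]$, exhibiting the required good reduction model. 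The main obstacle is the uniqueness of Weierstra{\ss} models invoked in the forward direction, which is not proved within the excerpt and would most naturally be cited; the remaining manipulations are straightforward applications of the discriminant identity and the valuation axioms.
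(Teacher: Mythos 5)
Your proof is correct, and your backward direction (discriminant condition implies good reduction) is exactly the paper's: choose $d$ with $v(d^{12}\Delta_{a,b})=0$ and rescale via $(a,b)\mapsto(d^4a,d^6b)$. For the forward direction, however, you take a genuinely different route. You invoke the uniqueness of short Weierstra{\ss} models in characteristic $\neq 2,3$ (any $E$-isomorphism of function fields yields an isomorphism of the smooth projective genus-one curves, which after composing with a translation becomes a pointed isomorphism and hence has the form $(X,Y)\mapsto(d^2X,d^3Y)$), so that the good-reduction model is forced to be $(d^4a,d^6b)$, from which the valuation computation is immediate. This is a standard, citable fact (e.g.\ Silverman, \emph{AEC}, III.3.1), and your sketch of why the non-pointed isomorphism can be promoted is right; but it is external to the paper, which never proves or cites it. The paper instead argues intrinsically with its own valuation-theoretic machinery: good reduction produces, via its Corollary 6.2, a residually transcendental extension $w$ with $Fw/Ev$ a non-ruled \emph{elliptic} function field, and this is incompatible with $v(\Delta_{a,b})=\min\{v(a^3),v(b^2)\}\notin 12vE$ (Theorem 5.8, which forces all residues to be ruled) and with $v(\Delta_{a,b})>\min\{v(a^3),v(b^2)\}$ (Proposition 6.6, which forces any non-ruled residue to be a conic function field). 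The trade-off: your argument is shorter and independent of the hard analytic results of Sections 5--6, but imports a geometric input; the paper's argument is self-contained within its valuation-theoretic development and incidentally shows that the discriminant condition is an invariant of the pair $(F/E,v)$ without ever comparing two Weierstra{\ss} models directly.
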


\begin{proof}
Assume that $v(\Delta_{a,b})=\min \{v(a^3), v(b^2)\}\in 12vE$.
Let $d\in E^{\times}$ be such that $v(d^{12}\Delta_{a,b})=0$. 
Set $a' =d^4a$ and $b'=d^6b$. Then $E(\mc{E}_{a',b'})\simeq E(\mc{E}_{a,b})\simeq F$,  $a', b'\in {\mc O}_v$ and $\Delta_{a',b'} \in  \mg{\mc O}_v$.
Hence $F/E$ is of good reduction with respect to $v$.

Assume now that $F/E$ is of good reduction with respect to $v$. 
By \Cref{unique_for_good_reduction}, there exists a residually transcendental  extension $w$  of $v$ to $F$ such that $Fw/Ev$ is function field of an elliptic curve over $E$. 
In particular, $Fw/Ev$ is transcendental and neither ruled nor the function field of a conic.
We conclude by \Cref{unique_for_bad_reduction} and \Cref{potentialgoodreduction} that $v(\Delta_{a,b}) = \min\{v(a^3),v(b^2)\} \in 12 vE$.
\end{proof}

We say that $F/E$ \emph{is of potential good reduction with respect to $v$} if $F=E(\mc{E}_{a,b})$ for certain $
a,b\in\mg{E}$ with $v(\Delta_{a,b})=\min \{v(a^3), v(b^2)\}$.

\begin{thm}
 \label{main_result}
Let $a,b\in E$ such that $\Delta_{a,b}\neq 0$ and let $F/E$ be the function field of $\mc{E}_{a,b}$. 
There is at most one extension $w$ of $v$ to $F$ such that $Fw/Ev$ is transcendental and non-ruled.
Moreover, if such an extension $w$ of $v$ to $F$ exists, then it is unramified  and one of the following holds: 
 \begin{enumerate}[$(i)$]
 \item $F/E$ is of good reduction with respect to $v$ and  $Fw/Ev$ is the function field of an elliptic curve.   
  \item $F/E$ not of potential good reduction with respect to $v$, 
  $v(ab), v(\Delta_{a,b})\in 2vE$ and  $Fw/Ev$ is the function field of a conic having no $Ev$-rational point.
 \end{enumerate}
\end{thm}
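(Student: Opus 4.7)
\emph{Proof proposal.} The plan is to split by the reduction type of $F/E$ at $v$ and invoke the appropriate previous result. For any Weierstra{\ss} model $F\simeq E(\mc E_{a,b})$, the quantity $v(\Delta_{a,b})-\min\{v(a^3),v(b^2)\}$ is non-negative, and under the change of model $(a,b)\mapsto (d^4a,d^6b)$ for $d\in\mg E$ both $v(\Delta_{a,b})$ and $\min\{v(a^3),v(b^2)\}$ shift by the same $12v(d)$. Together with membership of the common value in $12vE$, this quantity detects precisely whether $F/E$ is of good reduction, of potential good but not of good reduction, or not of potential good reduction at $v$.

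If $F/E$ is of good reduction, then by definition I may replace $(a,b)$ by a model with $a,b\in\mc O_v$ and $\Delta_{a,b}\in\mg{\mc O}_v$. \Cref{unique_for_good_reduction}, which rests on \Cref{delta_unit}, then yields a unique $w\in\Omega_v^{\ast}(F)$, with $wF=vE$ and $Fw/Ev$ equal to the function field of the elliptic curve $\mc E_{\ovl a,\ovl b}$ over $Ev$; in particular $w$ is unramified and conclusion $(i)$ holds. If $F/E$ is of potential good but not of good reduction, then $v(\Delta_{a,b})=\min\{v(a^3),v(b^2)\}\notin 12vE$, and \Cref{potentialgoodreduction} shows that every residually transcendental extension $w$ of $v$ to $F$ has ruled residue field extension; hence $\Omega_v^{\ast}(F)=\emptyset$ and the theorem is vacuously satisfied.

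If $F/E$ is not of potential good reduction, then $v(\Delta_{a,b})>\min\{v(a^3),v(b^2)\}$ for the given model, and the identity $\Delta_{a,b}=4a^3+27b^2$ together with $v(6)=0$ forces $v(a^3)=v(b^2)$, so that the hypothesis of \Cref{unique_for_bad_reduction} is met. That proposition characterizes the existence of $w\in\Omega_v^{\ast}(F)$ by its conditions $(i)$ and $(ii)$: if they fail, no such $w$ exists; if they hold, $w$ is unique and unramified, and $Fw/Ev$ is the function field of the conic $\mc C_{-2\ovl{u_1},\ovl{u_2}}$ over $Ev$, which by condition $(i)$ has no $Ev$-rational point. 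Since condition $(i)$ also yields $v(ab),v(\Delta_{a,b})\in 2vE$, conclusion $(ii)$ of the theorem holds.

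The bulk of the work has already been carried out in the three input results, and there is no significant new obstacle. The only mild point is to confirm that the numerical trichotomy by $v(\Delta_{a,b})$ versus $\min\{v(a^3),v(b^2)\}$ indeed matches the intrinsic reduction-type trichotomy for $F/E$, which is immediate from the invariance of $v(\Delta_{a,b})-\min\{v(a^3),v(b^2)\}$ under admissible changes of Weierstra{\ss} model.
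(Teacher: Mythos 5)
Your proposal is correct and follows essentially the same route as the paper: the trichotomy on $v(\Delta_{a,b})$ versus $\min\{v(a^3),v(b^2)\}$, resolved by citing \Cref{potentialgoodreduction}, \Cref{unique_for_good_reduction} (via \Cref{delta_unit}) and \Cref{unique_for_bad_reduction}. The only difference is minor: you justify that the numerical trichotomy matches the intrinsic reduction-type trichotomy by the invariance of $v(\Delta_{a,b})-\min\{v(a^3),v(b^2)\}$ under the Weierstra{\ss} model changes $(a,b)\mapsto(d^4a,d^6b)$, whereas the paper obtains the needed equivalence from \Cref{reductiontypes}, whose nontrivial direction is proved valuation-theoretically rather than by appealing to the classification of Weierstra{\ss} models.
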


\begin{proof}  
If $v(\Delta_{a,b})=\min\{v(a^3),v(b^2)\}\notin 12vE$, then it follows 
by \Cref{potentialgoodreduction} that $Fw/Ev$ is ruled for every residually transcendental extension $w$ of $v$ to $F$. 

Assume now that $v(\Delta_{a,b})=\min\{v(a^3),v(b^2)\}\in 12 vE$. 
By \Cref{reductiontypes}, $\mc{E}_{a,b}$ is of good reduction with respect to $v$.
By \Cref{unique_for_good_reduction}, there exists a unique extension $w$ of $v$ to $F$ such that $Fw/Ev$ is transcendental and non-ruled, and for this valuation $w$, the residue field extension  $Fw/Ev$ is the function field of an elliptic curve.

Assume finally that $v(\Delta_{a,b})>\min\{v(a^3),v(b^2)\}$. 
Hence $\mc{E}_{a,b}$ is not of potential good reduction
with respect to $v$.
By \Cref{unique_for_bad_reduction}, there exists at most one extension $w$ of $v$ to $F$ such that $Fw/Ev$ is transcendental and non-ruled, and if such an extension $w$ exists, then $v(ab), v(\Delta_{a,b})\in 2vE$ and  $Fw/Ev$ is the function field of a conic having no rational point over $Ev$. 
\end{proof}

Elliptic function fields $F/E$ are function fields of genus one curves having a rational point over $E$. 
Finally, we give an example of a field $E$ with a valuation $v$ and a function field $F/E$ of genus one such that there exist two residually transcendental extensions of $v$ to $F$ for which the corresponding residue field extensions are non-ruled.

The following example stems from \cite[Example 5.12]{BVG09}. There it is used for a discussion of a problem of sums of squares. For the relation of that topic with the problem on valuation extensions, we refer to the introduction of \cite{BGr}.

\begin{ex}
Let $E=\mathbb{R}(\!(t)\!)$ and $v$ be the $t$-adic on $E$ with value group $\zz$.
Then $Ev=\mathbb{R}$.
Consider the polynomial $f = -(X^2+t^2)(X^2+1) \in E[X]$ and the curve $\mc E:  Y^2 = f$.
Let $F$ be the function field of $\mc E$, that is,  
$F=E(X)\left[\sqrt{f}\right]$.
Then $F/E$ is a regular function field of genus one.

Let $w_0$ denote the Gauss extension of $v$ to $E(X)$ with respect to $X$. 
Then $E(X)w_0=\rr(\ovl{X})$.
Since $w_0(f) =0$ and $\ovl {f} =-\ovl{X}^2(\ovl{X}^2-1) \notin \sq{\rr(\ovl{X})}$,  it follows by \Cref{BG2.2} that  $w_0$ extends uniquely to a valuation $w$ on $F$ such that $wF=wE(X)=vE=\zz$ and
$$Fw=E(X){w}\left[\sqrt{\ovl{f}}\right]=\mathbb{R}\left(\ovl{X}\right)\left[\sqrt{-(\ovl{X}^2+1)}\right].$$

Now let $w_0'$ denote the Gauss extension of $v$ to $E(X)$ with respect to the variable $X'=t^{-1}X$.
Note that $w(X)=0$ and $w'(X)=v(t)=1$. Hence $w$ and $w'$ are not equivalent. 
Then $E(X)=E(X')$ and $F=E(X')[\sqrt{f'}]$ for $f' = t^{-2}f = -(X'^2+1)(t^2X'^2+1)$. Thus we obtain similarly that $w_0'$ extends uniquely to a valuation $w'$ on $F$ such that  $w'F=\zz$ and 
$$Fw'=\mathbb{R}\left(\overline{X'}\right)\left[\sqrt{-(\overline{X'}^2 +1)}\right]\,.$$

Hence both function fields $Fw/Ev$ and $Fw'/Ev$ are isomorphic to the function field of the conic $\mc C_{-1,-1}$  over $Ev =\rr$.
Since $\mc C_{-1,-1}$ has no rational point over $\rr$, we get that $Fw/Ev$ and $Fw'/Ev$ are non-ruled. 
\end{ex}

\subsection*{Acknowledgments} 
The second and third named authors thank  Prof. Anupam K. Singh for supporting  their research visits to IISER Mohali and IISER Pune respectively, during this work. We would further like to thank the referee for helpful comments and corrections.
\newpage

\bibliographystyle{amsalpha}

\begin{thebibliography}{1}
\bibitem{BGVG}
K.J. Becher, D.~Grimm, and J.~Van~Geel.
Sums of squares in algebraic function fields over a complete discretely valued field.
  Pacific J.~ Math.
  \textbf{267} (2014), 257--276.
  

\bibitem{BG21} K.J. Becher and Parul Gupta.
A ruled residue theorem for function fields of conics, J. Pure Appl. Algebra, \textbf{225} (2021), doi: \url{https://doi.org/10.1016/j.jpaa.2020.106638}

\bibitem{BGr} 
K.J. Becher and D.~Grimm. 
Nonsplit conics in the reduction of an arithmetic curve.
{arXiv preprint}, 2021.
\url{https://arxiv.org/pdf/2005.11855.pdf}

\bibitem{BVG09}
K.J. Becher and J.~Van Geel.
Sums of squares in function fields of hyperelliptic curves.
Math. Z. (2009) {261}: 829--844.


\bibitem{EP}
A. J. Engler, A.~Prestel, \emph{Valued fields}, Springer-Verlag, Berlin, 2005.



\bibitem{GS06}
P.~Gille, T.~Szamuely.
{\em Central Simple Algebras and Galois Cohomology}. Cambridge University Press, Cambridge, 2006.



\bibitem{Nag}
M. Nagata.
A theorem on valuation rings and its applications. 
Nagoya Math.~J. 29 (1967), 85--91.

\bibitem{Ohm}
J.~Ohm. {The ruled residue theorem for simple transcendental extensions of
  valued fields}. Proc.~Amer.~Math.~Soc.~\textbf{89} (1983), 16--18.

\bibitem{TVGY06}
S.V.~Tikhonov, J.~Van~Geel, V.I.~Yanchevski\v{\i}. Pythagoras numbers of function fields of hyperelliptic
curves with good reduction.
Manuscripta Math. 119, 305--322 (2006)
\end{thebibliography}

\end{document}